\date{}
\def\_email#1@#2\q_nil{\href{mailto:#1@#2}{{\emailfont #1\emailampersat #2}}}
\newcommand\emailampersat{{\color{cyan}\small@}}
\newtheorem{thm}{Theorem}[section]
\newtheorem{lma}[thm]{Lemma}
\newtheorem{coro}[thm]{Corollary}
\newtheorem{remark}{Remark}
\numberwithin{equation}{section}
\newcommand{\rpal}{r_{p,\alpha}^{\boldsymbol{\lambda}}}
\newcommand{\blambda}{\boldsymbol{\lambda}}
\newcommand{\bmu}{\boldsymbol{\mu}}
\title{\bf Near- and far-field expansions for stationary  solutions of Poisson--Nernst--Planck equations\footnote{Due to inelegant symbol, we provide an original manuscript in arXiv.}}
\author{Jhih-Hong Lyu \thanks{Department of Mathematics, National Taiwan University, Taipei 10617, Taiwan (\tt d06221001@ntu.edu.tw).}, Chiun-Chang Lee \thanks{Institute for Computational and Modeling Science, National Tsing Hua University, Hsinchu 30013, Taiwan ({\tt chlee@mail.nd.nthu.edu.tw}).}, Tai-Chia Lin \thanks{Department of Mathematics, National Taiwan University, Taipei 10617, Taiwan; National Center for Theoretical Sciences, Mathematics Division, Taipei 10617, Taiwan ({\tt tclin@math.ntu.edu.tw}).}}
\begin{document}
\maketitle
\begin{abstract} 
This work is concerned with the stationary Poisson--Nernst--Planck equation with a large parameter which describes a huge number of ions occupying an electrolytic region. Firstly, we focus on the model with a single specie of positive charges in one-dimensional bounded domains due to the assumption that these ions are transported in the same direction along a tubular-like mircodomain, as was introduced in \cite{Cartailler2017}. We show that the solution asymptotically blows up in a thin region attached to the boundary, and establish the refined ``near-field'' and ``far-field'' expansions for the solutions with respect to the parameter. Moreover, we obtain the boundary concentration phenomenon of the net charge density, which mathematically confirms the physical description that the non-neutral phenomenon occurs near the charged surface. In addition, we revisit a nonlocal Poisson--Boltzmann model for monovalent binary ions (cf. \cite{Lee2019,Lee2011}) and establish a novel comparison for these two models.
\vspace{5mm}\\
{\small {\bf Key words.} Stationary PNP equation, Large parameter, Near-field asymptotics, Far-field asymptotics, Non-neutral phenomenon}
\vspace{1mm}\\
{\small {\bf AMS subject classifications.}  35M33, 35B25, 35Q92, 35R09, 78A25}
\end{abstract}

\section{Introduction}\label{sec1}
With the development of current nanotechnology in electrochemistry~\cite{HY2015,RZSMB2012},
analysis of solutions to mathematical models with regulated parameters plays a more and more important role in investigating the distribution of electrostatic potential in electrolyte-like solutions (cf. \cite{FKL2017,HN1995,Hcy2019-1,HEL2011,KJH2005,MCNK2003,ST2015,SMP2016,PE2007}). 
In the past few decades, a key ingredient for understanding of such microscopic phenomena is the Poisson--Boltzmann equation \cite{DH1923,S2012,WXLS2014},
but which suffered from deficiencies that are well known. Different types of nonlinear electrochemistry systems (see, e.g.,~\cite{Holcman2018,HR2015,HL2015-1,HL2015-2,Hcy2019-2,HEL2014,HKL2010,Lee2021,LE2014,LE2015,MOC1986,R2008,RLW2006,SGNE2008}) have already been modeled and the corresponding computational confirmation is expected to reach theoretical prediction.

Among such phenomena in electrochemistry, an unstable one relates to the non-electroneutrality characterized by the localization of excess charge distributions. For instance, a faradaic current drives ions from one electrode surface to another and results in an ionic distribution which is non-uniform near the electrode surface. We refer the reader to~\cite{FK1987,non-neual}. Other  instabilities including the electroconvective instability in channels have been investigated; see, e.g., \cite{Liu2020} and references therein.
Despite the intensive investigation yielding detailed information and the reasonable success, little is known about the non-electroneutral phenomenon near the charged surface.
In \cite{Cartailler2017,CSH2017}, employing the one dimensional Poisson--Nernst--Planck (PNP) system provides an underlying framework for such phenomena, where it is reasonably assumed that all ions transport in the same direction along a tubular-like mircodomain with finite-length so that the physical domain can be set as a one-dimensional interval $\Omega=(0,1)$.
At the beginning of this work we consider the model for single-ion species which is represented as
\begin{equation}\label{eq:1.01}\left\{\begin{aligned}&\quad\,\,\,\,\frac{\partial{p}}{\partial{t}}(t,r)=D_p\frac{\partial}{\partial{r}}\left(\frac{\partial{p}}{\partial{r}}(t,r)+\frac{z_p\mathrm{e}_0}{k_BT}p(t,r)\frac{\partial{u}}{\partial{r}}(t,r)\right),\\&-\frac{\partial^2{u}}{\partial{r}^2}(t,r)=\blambda(\rho_{\text{f}}(r)+z_p\mathrm{e}_0p(t,r)),\quad\quad\quad(t,r)\in(0,\infty)\times(0,1),\end{aligned}\right.
\end{equation}
with the no-flux boundary conditions
\begin{equation}\label{eq:1.02}
\left(\frac{\partial{p}}{\partial{r}}+\frac{z_p\mathrm{e}_0}{k_BT}p\frac{\partial{u}}{\partial{r}}\right)(t,0)=\left(\frac{\partial{p}}{\partial{r}}+\frac{z_p\mathrm{e}_0}{k_BT}p\frac{\partial{u}}{\partial{r}}\right)(t,1)=0,\,\,t>0,
\end{equation}
and initial conditions $p(0,r)=p_0(r)>0$ and $u(0,r)=u_0(r)$, for $r\in[0,1]$, which are compatible with the boundary conditions \eqref{eq:1.02}.
Notice that \eqref{eq:1.01}--\eqref{eq:1.02} satisfies shift in variance in $u$ and so we will set the boundary condition for $u$ later.

Physically, $u$ is the electrostatic potential, $p$ is the density of cations, $D_p$ is the diffusion coefficient, $z_p$ is the valency of cations, $\mathrm{e}_0$ is the elementary charge, $k_B$ is the Boltzmann constant, $T$ is the absolute temperature, and $\rho_{\text{f}}$ is the fixed permanent charge density in the physical domain.
Besides, $\blambda=d^2\mathrm{e}_0S/(\varepsilon_0U_T)$ where $\varepsilon_0$ is the dielectric constant of the electrolyte, $U_T$ is the thermal voltage, $d$ is the diameter of the domain $\Omega=\left(0,1\right)$, and $S$ is the appropriate concentration scale \cite{PW2017}.
Note that the no-flux boundary conditions~\eqref{eq:1.02} ensure the conservation of ions, i.e.,
\begin{equation}\label{eq:1.03}
\int_0^1p(t,r)\mathrm{d}r=\int_0^1p_0(r)\mathrm{d}r,\,\,\forall\,t>0.
\end{equation}

To see the non-electroneutral phenomenon near the charged surface, $\blambda$ is assumed a large positive parameter corresponding to the number of ions occupying an electrolytic region, as was studied in \cite{Cartailler2017}.
To basically understand the non-electroneutral phenomenon of $u$ and $p$ as $\blambda\gg1$, we neglect the effect of fixed permanent charges by setting $\rho_{\text{f}}=0$ and focus mainly on their steady-state solutions.
Under the standard dimensionless formulation, we may set 
\begin{equation}\label{eq:1.04}
z_p=1,\,\,\mathrm{e}_0/k_BT=1,\,\,D_p=1,
\end{equation}
and regard $\blambda$ as a positive parameter. 
Without loss of generality, we may assume $\int_{0}^1p_0(r)\mathrm{d}r=1$. Then by \eqref{eq:1.03} and \eqref{eq:1.04}, the steady-states of \eqref{eq:1.01}--\eqref{eq:1.02} verify $u(t,r)=u_{\blambda}(r)$ and the nonlocal relation $p(t,r)=\frac{e^{-u_{\blambda}(r)}}{\int_0^1e^{-u_{\blambda}(s)}\text{d}s}$ (see also, \cite[equation (1.1) with $p=1$]{JACarrillo1998}, \cite[Section 2.1]{Cartailler2017} and \cite{Lee2011} and references therein).
As a consequence, $u_{\blambda}$ satisfies
\begin{align}\label{eq:1.05}
-u_{\blambda}''(r)=\rho_{\blambda}(r):=\frac{\blambda e^{-u_{\blambda}(r)}}{\int_0^1 e^{-u_{\blambda}(s)}\mbox{d}s},~r\in(0,1).
\end{align}
Physically, $\rho_{\blambda}$ represents the net charge density of the ion species and the parameter $\int_0^1\rho_{\blambda}(r)\mbox{d}r$ denotes the total charges of ions. It should be mentioned that in \cite[Chapter 10.6.1]{HS2018}, \eqref{eq:1.05} is named the reverse Liouville--Brat\'{u}--Gelfand (LBG) equation which can be regarded as the nonlocal LBG equation~(see, e.g., \cite{JACOBSEN2002283} and references therein).

For \eqref{eq:1.05}, we consider the following condition at $r=0$: 
\begin{equation}\label{eq:1.06}
u_{\blambda}\left(0\right)=u_{\blambda}'\left(0\right)=0,
\end{equation}
as was presented in \cite{Cartailler2017}, where assumption $u_{\blambda}(0)=0$ is actually due to a fact that equation \eqref{eq:1.05} satisfies the shift invariance; that is, for any $c\in\mathbb{R}$, $u_{\blambda}+c$ also satisfies \eqref{eq:1.05}.
We emphasize that
such a setting immediately implies 
\begin{equation}\label{eq:1.07}
u_{\blambda}'(1)=-\blambda.
\end{equation}
This along with \eqref{eq:1.06} asserts that there exists an interior point $r^{\blambda}_{\mathrm{int}}\in(0,1)$ depending on $\blambda$ such that when $\blambda\to\infty$, the net charge density $\rho_{\blambda}(r^{\blambda}_{\mathrm{int}})=-u_{\blambda}''(r^{\blambda}_{\mathrm{int}})=u_{\blambda}'(0)-u_{\blambda}'(1)=\blambda$ asymptotically blows up.
As will be clarified later, such a phenomenon occurs only when $r^{\blambda}_{\mathrm{int}}$ is sufficiently close to the boundary point $r=1$ as $\blambda$ tends to infinity.
The boundary blow-up behavior theoretically represents the non-neutral phenomenon near the charged surface.
We further refer the reader to \cite{Lee2019,Lee2011,ccleerolf2018} for the different boundary blow-up phenomena related models with multiple species under various boundary conditions.

It should be stress that equation \eqref{eq:1.05}--\eqref{eq:1.06} has recently been studied by J. Cartailler et al \cite{Cartailler2017}, who used the phase-plane analysis to obtain the uniqueness of equation \eqref{eq:1.05}--\eqref{eq:1.06}.
A more general argument for the uniqueness can also be found in \cite{Lee2014}.
Moreover, the authors in \cite{Cartailler2017} showed that as $\blambda\to\infty$, the numerical solution to \eqref{eq:1.05}--\eqref{eq:1.07} develops boundary layers near $r=1$.
They further studied the leading order term of the asymptotic expansions of $u_{\blambda}(1)$ with respect to $\blambda\gg1$, and compared the result with their numerical simulation.
\textit{However, the more refined pointwise asymptotics for solutions, which is crucial for describing the sharp change of $u_{\blambda}$ near the boundary, remains unclear.}
The issue will be addressed in detail in this work. 

\subsection{Near-field and far-field expansions}
\label{subsec:1.1}

The main difficulty lies in the lack of available asymptotic analysis for such singularly perturbed nonlocal models, which we shall explain as follows.
Firstly, one observes from \eqref{eq:1.05} and \eqref{eq:1.06} that
\begin{align}
\label{eq:1.08}
u_{\blambda}(r),\,\,u_{\blambda}'(r),\,\,u_{\blambda}''(r)<0,~~r\in(0,1).
\end{align}
In particular, \eqref{eq:1.05}--\eqref{eq:1.07} and \eqref{eq:1.08} imply that $\int_0^1 e^{-u_{\blambda}(s)}\mbox{d}s\geq1$ and
\begin{align}\label{eq:1.09}
u_{\blambda}(1)=-\log\left(1+\frac{\blambda}{2}\int_0^1 e^{-u_{\blambda}(s)}\mbox{d}s\right)\to-\infty\,\,\mbox{as}\,\,\blambda\to\infty.
\end{align}
This illustrates the importance of the refined asymptotic expansions of $\int_0^1e^{-u_{\blambda}(s)}\mbox{d}s$ with respect to $\blambda\gg1$.  A perspective on obtaining such asymptotics is to consider the following nonlinear eigenvalue problem  with $\kappa>0$ (see also, \cite[Appendix]{Cartailler2017}):
\begin{equation}\label{eq:1.10}
\left\{
\begin{aligned}
&-U_{\kappa}''(r)=\kappa e^{-U_{\kappa}(r)},\quad r\in(0,1),
\\&
U_{\kappa}(0)=U_{\kappa}'(0)=0.\end{aligned}\right.
\end{equation}
When $\kappa\in(0,\pi^2/2)$, \eqref{eq:1.10} has a unique solution $U_{\kappa}(r)=2\log\cos(r\sqrt{\kappa/2})$  which is uniformly bounded on $[0,1]$, and $U_{\kappa}(1)\to-\infty$ as $\kappa$ approaches $\pi^2/2$ from the left. However, when $\kappa>\pi^2/2$, \eqref{eq:1.10} does not have solutions defined in the whole domain $(0,1)$.  As a consequence, by \eqref{eq:1.09} it is expected that a solution~$u_{\blambda}$ of \eqref{eq:1.05}--\eqref{eq:1.06} satisfies
\begin{align}\label{eq:1.11}
    \frac{\blambda}{\int_0^1e^{-u_{\blambda}(s)}\mathrm{d}s}<\frac{\pi^2}{2}\,\,\mathrm{for}\,\,\blambda>0,\,\,\mathrm{and}\,\,\frac{\blambda}{\int_0^1e^{-u_{\blambda}(s)}\mathrm{d}s}\stackrel{\blambda\to\infty}{-\!\!\!-\!\!\!-\!\!\!-\!\!\!\rightarrow}\frac{\pi^2}{2}.
\end{align} 
We will prove \eqref{eq:1.11} rigorously and establish a more refined asymptotic expansion of $\int_0^1e^{-u_{\blambda}(s)}\mathrm{d}s$ with respect to $\blambda\gg1$; see the detail in  \eqref{eq:2.01} and Remark~\ref{rk-20-1111}. Moreover, it yields the limiting equation for \eqref{eq:1.05}--\eqref{eq:1.06} formally as follows:
\begin{align}
\label{eq:1.12}
\begin{cases}
-U''(r)=\displaystyle\frac{\pi^2}{2}e^{-U(r)},~~r\in(0,1),\\
U(0)=U'(0)=0,
\end{cases}
\end{align}
and the unique solution
\begin{align}
\label{eq:1.13}
U(r):=U_{\frac{\pi^2}{2}}(r)=2\log\cos\left(\frac{\pi}{2}r\right),\,\,r\in[0,1),
\end{align}
represents the zeroth-order outer-solution to \eqref{eq:1.05}--\eqref{eq:1.06} with respect to $\blambda\gg1$; see also, Remark~\ref{remark1}. In passing we note that $U(r)\to-\infty$ and $U'(r)=-\pi\tan(\frac{\pi}{2}r)\to-\infty$ as $r\uparrow1$, which formally coincide with the boundary asymptotic blow-up behavior of $u_{\blambda}$ and $u_{\blambda}'$ obtained respectively in \eqref{eq:1.09} and \eqref{eq:1.07}. In summary, equation \eqref{eq:1.05}--\eqref{eq:1.06} is a nonlocal singularly perturbed model with small parameter $\frac{1}{\blambda}$ in front of its Laplace operator, and its limiting equation connects to a boundary blow-up problem \eqref{eq:1.12}.

What we want to point out is that the limiting model \eqref{eq:1.12} of \eqref{eq:1.05}--\eqref{eq:1.06} cannot be directly obtained from applying the standard method of matched asymptotic expansions since the nonlocal coefficient $\frac{\blambda}{\int_0^1 e^{-u_{\blambda}(s)}\text{d}s}$ is involved with the parameter $\blambda$ and its corresponding zeroth order outer solution $U(r)$ diverges near $r=1$.
To achieve the pointwise description of $u_{\blambda}$ with respect to $\blambda\gg1$, our first task is concerned with the asymptotic expansions for the solution $u_{\blambda}$, consisting of the near-field expansion and the far-field expansion.
In order for the reader to realize the difference from the method of matched asymptotic expansions, we present the argument as follows:
\begin{itemize}
\item The \textbf{near-field expansion} focuses mainly on the refined asymptotics of $u_{\blambda}(\rpal)$ and $u'_{\blambda}(\rpal)$, where $\alpha$ and $p$ are positive constants independent of $\blambda$, and
\begin{equation}\label{eq:1.14}
\rpal=1-\frac{p}{\blambda^\alpha}\in(0,1),\,\,\blambda\gg1,
\end{equation}
is sufficiently close to the boundary point $r=1$.
Let us emphasize again that as $\blambda\gg1$, $u_{\blambda}$ develops boundary layers and asymptotically blows up near the boundary $r=1$ so that the asymptotic behavior of $u_{\blambda}$ has dramatic changes in a thin neighborhood of $r=1$.
To better understand the structure of boundary layers, we are devoted to the pointwise asymptotics of $u_{\blambda}(\rpal)$ and $u_{\blambda}'(\rpal)$ with various $\alpha$ and $p$; see, for example, \eqref{eq:1.18} and \eqref{eq:1.19}.
Such a consideration essentially points out the difference between the analysis of \eqref{eq:1.05}--\eqref{eq:1.06} and the standard singularly perturbed equations.
\item the \textbf{far-field expansion} focuses on the refined asymptotics for $u_{\blambda}$ in $\mathcal{C}^1(K)$  as $\blambda\to\infty$, where $K$ independent of $\blambda$ is a compact subset of $[0,1)$.
We will frequently use the norm $\|\cdot\|_{\mathcal{C}^1(D)}$ defined by\begin{equation}\label{eq:1.15}\|f\|_{\mathcal{C}^1(D)}:=\sup_{D}\left(|f|+|f'|\right)\end{equation}
where $D$ is a bounded set of $\mathbb{R}$ and $f\in\mathcal{C}^1(D)$. When $D=K$ is a compact subset of $[0,1)$, \eqref{eq:1.15} is used for convenience to describe the asymptotic expansions of $u$ in $K$.
\end{itemize} 
We refer the reader to \cite{Han2013,Xing2011} for the more detailed physical background of these two terminologies.

We are now in a position to draw the asymptotic behavior of $u_{\blambda}$.
As will be presented in detail in Theorem~\ref{thm:2.1}, for any compact subset $K$ (independent of $\blambda$) of $[0,1)$, we establish the \textbf{far-field expansion} of $u_{\blambda}$ with the precise asymptotic expansions up to the order of $\frac{1}{\blambda^2}$:
\begin{align}\label{eq:1.16}
u_{\blambda}(r)=\,U(r)+\pi{r}\left(\frac{\sin(\pi r)}{\blambda}-\frac{\pi{r}+2\sin(\pi r)}{\blambda^2}\right)\sec^2\left(\frac{\pi}{2}r\right)+\frac{\widetilde{u}_{\blambda}(r)}{\blambda^2}\,\,\,\,\,\mbox{and}\,\,\,
\sup_{K}|\widetilde{u}_{\blambda}|\stackrel{\blambda\to\infty}{-\!\!\!-\!\!\!\longrightarrow}0,
\end{align} 
where $U$ defined in \eqref{eq:1.13} is the unique solution to \eqref{eq:1.12}.
In contrast to the asymptotics of $u_{\blambda}$ in any compact subset of $[0,1)$, our near-field analysis reveals a totally different asymptotic behaviour of $u_{\blambda}$ near the boundary $r=1$.
In Theorem~\ref{thm:2.2}, we show that $u_{\blambda}$ asymptotically blows up near the boundary $r=1$ and obtain a novel asymptotics 
\begin{equation}\label{eq:1.17}
u_{\blambda}\left(\rpal\right)={\min\{2,2\alpha\}}\log\frac{1}{\blambda}+\mathcal{O}_{p,\alpha},
\end{equation}
where $\mathcal{O}_{p,\alpha}$ depends mainly on $p$ and $\alpha$ and satisfies
\begin{equation*}
\limsup_{\blambda\to\infty}|\mathcal{O}_{p,\alpha}|<\infty
\end{equation*}
(see Theorem~\ref{thm:2.2} for the detailed expression of $\mathcal{O}_{p,\alpha}$).
As it was mentioned previously, we shall stress that the concept of the near-field expansions focus on the pointwise asymptotic behavior of solutions sufficiently near the boundary, which is different from the standard matched inner solution.
As a consequence, by \eqref{eq:1.16} and \eqref{eq:1.17} we know that a strong change of $u_{\blambda}$ with $\blambda\to\infty$ merely occurs near the boundary $r=1$, and there hold: (i) as $0<\alpha<1$, $\displaystyle\lim_{\blambda\to\infty}\textstyle\frac{u_{\blambda}(\rpal)}{\log\blambda^{-1}}=2\alpha$ shows that the blow-up asymptotics of $u_{\blambda}(\rpal)$ varies with $\alpha$; (ii) for $0<\alpha_1<\alpha_2<1$, interior points $1-{\blambda^{-\alpha_i}}$, $i=1,2$, are sufficiently close to the boundary, but the corresponding potential difference $|u_{\blambda}(1-{\blambda^{-\alpha_1}})-u_{\blambda}(1-{\blambda^{-\alpha_2}})|$ tends to infinity as $\blambda\gg1$. Furthermore, we show the following properties that emphasize the significant change of $u_{\blambda}$ near the boundary $r=1$ as $\blambda\gg1$ (which can be obtained from \eqref{eq:2.06}, \eqref{eq:2.08} and \eqref{eq:2.10}):
\begin{itemize}
\item[\textbf{(A)}]  For $\rpal$ satisfying \eqref{eq:1.14} with $0<\alpha\leq1$, \textbf{the second order term} of $u_{\blambda}(\rpal)$ (i.e., the leading order term of $\mathcal{O}_{p,\alpha}$ as $\blambda\gg1$) relies exactly on $p$; however, when $\alpha>1$, the second order term of $u_{\blambda}(\rpal)$ is independent of $p$.
\item[\textbf{(B)}] Various potential differences $|u_{\blambda}(\rpal)-u_{\blambda}(1)|$ and $|u_{\blambda}\left(r_{p_1;\alpha}^{\blambda}\right)-u_{\blambda}\left(r_{p_2;\alpha}^{\blambda}\right)|$ with $p_1\neq{p}_2$ can be presented as follows: 
\begin{align}
\label{eq:1.18}
\lim_{\blambda\to\infty}\left|u_{\blambda}(\rpal)-u_{\blambda}\left(1\right)\right|=
\begin{cases}
\infty,&\mbox{if}\,\,0<\alpha<1,\\
\displaystyle2\log\frac{p+2}{2},&\mbox{if}\,\,\alpha=1,\\0,&\mbox{if}\,\,\alpha>1,
\end{cases}
\end{align}
and 
\begin{align}
\label{eq:1.19}
\lim_{\blambda\to\infty}\left|u_{\blambda}\left(r_{p_1;\alpha}^{\blambda}\right)-u_{\blambda}\left(r_{p_2;\alpha}^{\blambda}\right)\right|=
\begin{cases}\displaystyle2\left|\log\frac{p_1}{p_2}\right|,&\mbox{if}\,\,0<\alpha<1,
\\[3mm]\displaystyle2\left|\log\frac{p_1+2}{p_2+2}\right|\in(0,2\left|\log\frac{p_1}{p_2}\right|),&\mbox{if}\,\,\alpha=1,\\
0,&\mbox{if}\,\,\alpha>1.
\end{cases}
\end{align}
Since $\mbox{dist}(\rpal,1)\to0$ and $\mbox{dist}(r_{p_1;\alpha}^{\blambda},r_{p_2;\alpha}^{\blambda})\to0$ as $\blambda\to\infty$, \eqref{eq:1.18} and \eqref{eq:1.19} present that the potential $u_{\blambda}$ has a sharp change in a quite thin region next to the boundary $r=1$. We refer the reader to Theorem~\ref{thm:2.2} for the asymptotic expansions (at most the exact first three order terms) of $u_{\blambda}(\rpal)$ and $u_{\blambda}'(\rpal)$ with respect to  $\blambda\gg1$.
\end{itemize}

\begin{itemize}
\item[\textbf{(C)}]  Based on \eqref{eq:1.16} and \eqref{eq:1.17}, the far-field and near-field expansions of $\rho_{\blambda}$ with respect to $\blambda\gg1$ are established.
Moreover, we show in Corollary~\ref{coro:2.3} that as $\blambda\to\infty$, the net charge density $\frac{e^{-u_{\blambda}}}{\int_0^1e^{-u_{\blambda}(s)}\text{d}s}=\frac{\rho_{\blambda}}{\blambda}$ behaves exactly as a Dirac measure supported at boundary point $r=1$.
This mathematically confirms that the non-neutral phenomenon occurs near the charged surface. \end{itemize}

\subsection{A new comparison with charge-conserving Poisson--Boltzmann equations}
\label{subsec:1.2}

In this section, we pay attention to a bi-nonlocal Poisson--Boltzmann equation for monovalent binary
ions (usually called the charge-conserving Poisson--Boltzmann equation~\cite{WXLS2014})
\begin{align}
\label{eq:1.20}
&v_{\bmu,\blambda}''(r)=\frac{\bmu e^{v_{\bmu,\blambda}(r)}}{\int_0^1e^{v_{\bmu,\blambda}(s)}\mathrm{d}s}-\frac{\blambda e^{-v_{\bmu,\blambda}(r)}}{\int_0^1e^{-v_{\bmu,\blambda}(s)}\mathrm{d}s},~~r\in(0,1),
\end{align}
with the same boundary condition of $u_{\blambda}$ as \eqref{eq:1.06}:
\begin{align}
\label{eq:1.21}
v_{\bmu,\blambda}(0)=v_{\bmu,\blambda}'(0)=0.
\end{align}
Equation \eqref{eq:1.20} is derived from the steady-state of PNP equation for monovalent binary electrolytes (cf. \cite{Lee2016-1,Lee2019,Lee2011,Lee2015}), where $\bmu$ and $\blambda$ are positive parameters related to the total number of anions and cations, respectively.
When we take a formal look at the case $0<\bmu\ll\blambda$, i.e., the total number of cations is great larger than that of anions, it seems that equation \eqref{eq:1.20}--\eqref{eq:1.21} approaches equation~\eqref{eq:1.05}--\eqref{eq:1.06}.
However, since the rigorous asymptotic behavior of those nonlocal terms are unknown, it is not obvious that $0<\bmu\ll\blambda$ implies $\frac{\bmu}{\int_0^1e^{v_{\bmu,\blambda}(s)}\mathrm{d}s}\ll\frac{\blambda}{\int_0^1e^{-v_{\bmu,\blambda}(s)}\mathrm{d}s}$.
Hence, a question is naturally raised: 
\begin{itemize}
\item[\textbf{(Q)}] \textit{Assume that} $\bmu$ \textit{depends on} $\blambda$. \textit{What does the relation between} $\bmu$ \textit{and} $\blambda$ \textit{make}
\begin{align}
\label{eq:1.22}
\lim_{\blambda\to\infty}\|v_{\bmu,\blambda}-u_{\blambda}\|_{\mathcal{C}^1([0,1])}=0
\end{align}
\textit{hold? Here $\left\|\,\cdot\,\right\|_{\mathcal{C}([0,1])}$ is defined in \eqref{eq:1.15} with $K=[0,1]$.}
\end{itemize}

Let us first make a brief review on \eqref{eq:1.20}--\eqref{eq:1.21} and point out the difficulty in studying the question (Q).
It is known (cf. \cite{Lee2019,Lee2011}) that for the case
\begin{equation}
\label{eq:1.23}
\bmu=\gamma\blambda\stackrel{\blambda\to\infty}{-\!\!\!-\!\!\!-\!\!\!\rightarrow}\infty~~(\gamma>0\mbox{~independent of~}\blambda),
\end{equation}
there holds $\displaystyle\lim_{\blambda\to\infty}|v_{\bmu,\blambda}|=0$ exponentially in any compact subset $K$ of $[0,1)$.
Moreover, under \eqref{eq:1.23}, by following the similar arguments as in \cite[(2.23)]{Lee2019}, we have that $\frac{\bmu}{\int_0^1e^{v_{\bmu,\blambda}(s)}\mathrm{d}s}$ and $\frac{\blambda}{\int_0^1e^{-v_{\bmu,\blambda}(s)}\mathrm{d}s}$ are divergent as $\blambda\to\infty$ since
\[\frac{\bmu}{\blambda\int_0^1e^{v_{\bmu,\blambda}(s)}\mathrm{d}s}\to\gamma~~\mbox{and}~~\frac1{\int_0^1e^{-v_{\bmu,\blambda}(s)}\mathrm{d}s}\to\gamma~~\mbox{as}~\blambda\to\infty.
\]
In this case, the asymptotic behavior of $v_{\bmu,\blambda}$ is totally different from that of $u_{\blambda}$ since $\displaystyle\lim_{\blambda\to\infty}\sup_{K}|u_{\blambda}-U|=0$ (see \eqref{eq:1.16}) and $\displaystyle\lim_{\blambda\to\infty}\textstyle\frac1{\int_0^1e^{-u_{\blambda}(s)}\mathrm{d}s}=0$ (see \eqref{eq:2.01}).
As a consequence, \eqref{eq:1.22} never holds under the condition \eqref{eq:1.23}.
We shall stress that the study of (Q) is different from the case in most recent work \cite{Lee2019} since the main analysis technique in \cite{Lee2019} needs the constraint \eqref{eq:1.23}.
Because of the limitation of analysis technique in these literatures, as $\blambda\to\infty$, the asymptotic behavior of the nonlocal coefficient $\frac{\bmu}{\int_0^1e^{v_{\bmu,\blambda}(s)}\mathrm{d}s}$ and $v_{\bmu,\blambda}$ without assumption \eqref{eq:1.23} remains unknown.

We take an essential viewpoint to answer question (Q).
Thanks to the inverse H\"older type estimate established in \cite[(3.8)]{ccleerolf2018}, we have, for $\bmu>0$ and $\blambda>0$, the estimate $1\leq\int_0^1e^{v_{\bmu,\blambda}(s)}\mathrm{d}s\int_0^1e^{-v_{\bmu,\blambda}(s)}\mathrm{d}s\leq\max\{\frac{\blambda}{\bmu},\frac{\bmu}{\blambda}\}$.
This implies, for $\blambda>\bmu>0$, 
\begin{align}
\label{eq:1.25}
\bmu^2\leq\frac{\bmu}{\int_0^1e^{v_{\bmu,\blambda}(s)}\mathrm{d}s}\cdot\frac{\blambda}{\int_0^1e^{-v_{\bmu,\blambda}(s)}\mathrm{d}s}\leq\bmu\blambda.
\end{align}
When $\blambda>0$ is fixed and $\bmu\to0^+$, equation \eqref{eq:1.20}--\eqref{eq:1.21} of $v_{\bmu,\blambda}$ formally approaches equation \eqref{eq:1.05}--\eqref{eq:1.06} of $u_{\blambda}$ since \eqref{eq:1.25} implies $\displaystyle\lim_{\bmu\to0^+}\textstyle\frac{\bmu}{\int_0^1e^{v_{\bmu,\blambda}(s)}\mathrm{d}s}=0$.
However, when $\blambda\to\infty$ and $\bmu\to0^+$ independently, it is not intuitive  to claim $\frac{\bmu}{\int_0^1e^{v_{\bmu,\blambda}(s)}\mathrm{d}s}\to0$ because we do not have the further information about $\bmu\blambda$.
From another viewpoint, if we first assume that $\bmu$ depends on $\blambda$ and \eqref{eq:1.22} holds, then we have $\frac{{\blambda}}{\int_0^1e^{-v_{{\bmu},\blambda}(s)}\text{d}s}\sim\frac{{\blambda}}{\int_0^1e^{-u_{\blambda}(s)}\text{d}s}\to\frac{\pi^2}{2}$ as $\blambda\to\infty$ (cf. \eqref{eq:2.01}).
Along with \eqref{eq:1.25}, we find that the condition
\begin{align}
\label{eq:1.26}
\lim_{\blambda\to\infty}\bmu\blambda=0
\end{align}
verifies $\displaystyle\lim_{\blambda\to\infty}\textstyle\frac{\bmu}{\int_0^1e^{v_{\bmu,\blambda}(s)}\mbox{d}s}=0$, together with $v_{{\bmu},\blambda}(r)\leq{v}_{{\bmu},\blambda}(0)=0$ (cf. Lemma~\ref{lma:4.1}), we obtain  
\[
\max_{[0,1]}\frac{\bmu e^{v_{\bmu,\blambda}}}{\int_0^1e^{v_{\bmu,\blambda}(s)}\mathrm{d}s}=\frac{\bmu}{\int_0^1e^{v_{\bmu,\blambda}(s)}\mathrm{d}s}\to0\,\,\mathrm{as}\,\,\blambda\to\infty,
\]
and equation~\eqref{eq:1.20} formally approaches equation~\eqref{eq:1.05}.
The following theorem confirms such an observation and establishes convergence of $v_{\bmu,\blambda}$ with $\bmu\to0$.
In particular, \eqref{eq:1.26} is a sufficient condition for \eqref{eq:1.22}.

\begin{thm}
\label{thm:1.1}
For $\blambda>\bmu>0$, let $v_{\bmu,\blambda}\in\mathcal{C}^2([0,1])$ be the unique solution to \eqref{eq:1.20}--\eqref{eq:1.21} (cf. \cite{Lee2014,Lee2019}), and let $u_{\blambda}\in\mathcal{C}^2([0,1])$ be the unique solution to \eqref{eq:1.05}--\eqref{eq:1.06}. Then both $v_{\bmu,\blambda}$ and $u_{\blambda}$ are monotonically decreasing, but $v_{\bmu,\blambda}-u_{\blambda}$ is monotonically increasing. Moreover, we have:
\begin{enumerate}
\item[(a)] Assume \eqref{eq:1.26}. Then \eqref{eq:1.22} is achieved. 
\item[(b)] For $\blambda>0$ fixed, there holds 
\[
\lim_{\bmu\to0^+}\|v_{\bmu,\blambda}-u_{\blambda}\|_{\mathcal{C}^1([0,1])}=0.
\]
\end{enumerate}
\end{thm}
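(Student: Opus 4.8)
The plan is to pass to the constant effective coefficients $A := \bmu/\int_0^1 e^{v_{\bmu,\blambda}}$, $B := \blambda/\int_0^1 e^{-v_{\bmu,\blambda}}$, $B' := \blambda/\int_0^1 e^{-u_\blambda}$, so that $v := v_{\bmu,\blambda}$ solves $-v'' = Be^{-v}-Ae^v$ and $u := u_\blambda$ solves $-u'' = B'e^{-u}$, both with zero Cauchy data at $0$. Monotonicity of $u$ is immediate from $-u''>0$, $u'(0)=0$. For $v$ I would use the first integral $(v')^2 = 2f(v)$, $f(v) := Ae^v + Be^{-v} - A - B$, where $f$ is strictly convex, $f(0)=0$, $f'(0)=A-B$. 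Were $A\ge B$, then $f>0$ for small $v>0$ while $-v''(0)=B-A\le 0$, which forces $v>0$ and increasing on $(0,1]$; but then $\int e^v\ge 1\ge\int e^{-v}$, giving $A\le\bmu<\blambda\le B$, a contradiction (the degenerate case $A=B$ forces $v\equiv 0$, hence $\bmu=\blambda$). Thus $A<B$, and the first integral then confines $v$ to the well $v<0$, $v'<0$ on $(0,1]$.

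For $\phi := v-u$ we have $\phi(0)=\phi'(0)=0$ and, using $e^{-v}=e^{-u}e^{-\phi}$,
\begin{equation*}
\phi'' = Ae^{v} + e^{-u}\bigl(B'-Be^{-\phi}\bigr),\qquad
\phi'(1) = A\int_0^1 e^{v} - B\int_0^1 e^{-v} + B'\int_0^1 e^{-u} = \bmu .
\end{equation*}
The goal is $\phi'\ge 0$ on $[0,1]$ (hence $\phi\ge 0$, i.e.\ $v\ge u$). Everything reduces to the sign of $\phi''(0)=A-(B-B')$, i.e.\ to the inequality $B-B'\le A$: once this is available, at an interior zero $r_0$ of $\phi'$ one computes $\phi''(r_0)=2Ae^{v(r_0)}-A-(B-B')$, and this expression, together with a barrier controlling the region where $v$ is very negative, prevents $\phi'$ from changing sign. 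I expect $B-B'\le A$ — equivalently $\int e^{v}\bigl(\int e^{-u}-\int e^{-v}\bigr)\le\tfrac{\bmu}{\blambda}\int e^{-u}\int e^{-v}$ — to be \emph{the main obstacle}; note it is in fact \emph{necessary} for $\phi$ to be increasing (otherwise $\phi'<0$ just past $0$), it holds trivially at $\bmu=0$ (where $v\equiv u$), and I would obtain it by a continuity/homotopy argument in $\bmu\in[0,\blambda)$ with suitable a priori bounds, or else by a direct phase--plane comparison of the orbits of $u$ and $v$ issuing from the origin.

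Granting the monotonicity, part (b) is a compactness argument. With $\blambda$ fixed, $v\ge u$ gives $u_\blambda(1)\le v\le 0$, whence $e^{-v}$ is bounded, $B\in[B',\blambda]$, $A\le\bmu\,e^{|u_\blambda(1)|}$, and so $\{v_{\bmu,\blambda}\}$ is bounded in $\mathcal{C}^2([0,1])$, hence precompact in $\mathcal{C}^1([0,1])$, uniformly for $\bmu\in(0,\blambda)$. For any sequence $\bmu_n\to 0^+$, a $\mathcal{C}^1$-limit $w$ satisfies $A_n\to 0$, $B_n\to\blambda/\int e^{-w}$, and, passing to the limit in $v_{\bmu_n}'(r)=\int_0^r\!\bigl(A_ne^{v_{\bmu_n}}-B_ne^{-v_{\bmu_n}}\bigr)$, one finds $w$ solves \eqref{eq:1.05}--\eqref{eq:1.06}; by uniqueness $w=u_\blambda$. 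Since every subsequence has a further subsequence converging to $u_\blambda$, the whole family converges, which is (b).

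For part (a), assume \eqref{eq:1.26}. The first key point is $A\to 0$: by \eqref{eq:1.25}, $AB\le\bmu\blambda$, while $v\ge u$ gives $B\ge B'=\kappa_u\to\pi^2/2$ by \eqref{eq:1.11} (Theorem~\ref{thm:2.1}), so $A\le\bmu\blambda/\kappa_u\to 0$. Set $c:=\log(B/B')\ge 0$ and let $r_c\in[0,1]$ be the point where the increasing function $\phi$ reaches the level $c$ (with $r_c=1$ if $\phi\le c$ on $[0,1]$). On $[0,r_c]$ one has $B'-Be^{-\phi}\le 0$, so $\phi''\le Ae^{v}\le A$ and thus $0\le\phi'(r)\le Ar$; in particular $c=\phi(r_c)\le A/2$ when $r_c<1$. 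On $[r_c,1]$ one has $B'-Be^{-\phi}\ge 0$, so $\phi''\ge 0$, so $\phi'$ is nondecreasing and $\phi'\le\phi'(1)=\bmu$ there. Combining, $\sup_{[0,1]}|\phi'|\le\max\{A,\bmu\}$ and $\sup_{[0,1]}|\phi|=\phi(1)\le A/2+\bmu$, both $\to 0$, which is \eqref{eq:1.22}. The only external ingredient here is $\kappa_u\to\pi^2/2$; as signalled above, the genuine difficulty of the theorem lies in the monotonicity of $v-u$, and specifically in the inequality $B-B'\le A$.
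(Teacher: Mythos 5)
Your reduction of the theorem to the monotonicity of $\phi=v_{\bmu,\blambda}-u_{\blambda}$ is the right diagnosis, but that monotonicity is precisely the heart of the theorem and your proposal does not actually prove it. You defer the key inequality $B-B'\le A$ (equivalently $\phi''(0)\ge0$) to an unspecified ``continuity/homotopy argument in $\bmu$'' or ``phase-plane comparison,'' and, as you yourself observe, this inequality is essentially equivalent to the local statement that $\phi$ is nondecreasing at $0$ --- so taking it as an input is close to circular, and a homotopy argument would still require the quantitative openness/closedness estimates you have not supplied. Moreover, even granting $B-B'\le A$, your treatment of an interior zero $r_0$ of $\phi'$ is incomplete: your (correct) identity $\phi''(r_0)=2Ae^{v(r_0)}-A-(B-B')$ can be \emph{negative} when $v(r_0)$ is very negative (e.g.\ near $r=1$, where $v\le u_{\blambda}$ plus $\phi(r_0)$ and $u_{\blambda}(1)\approx-2\log\blambda$), and the ``barrier controlling the region where $v$ is very negative'' that is supposed to rescue this is never constructed. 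Since your proofs of (a) and (b) both invoke $v\ge u$ and the monotone splitting at the level $c=\log(B/B')$, the whole argument is conditional on this unproven step; this is a genuine gap.

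For comparison, the paper (Lemma~4.2) obtains the monotonicity without any a priori sign of $\phi''(0)$: writing $w''=e^{-v}\left(Ae^{2v}-B+B'e^{w}\right)$, the bracket is strictly increasing in both $v$ and $w$, and since $v$ is strictly decreasing one can compare the bracket at two points (a putative interior minimum with $w\le0$ against $r=0$, and later an interior maximum against a subsequent interior minimum, which must exist because $w'(1)=\bmu>0$) to reach a contradiction; this yields $w\ge0$, no interior extrema, and hence $w'\ge0$ directly. Granted the monotonicity, your parts (a) and (b) are correct and in fact take a somewhat different, arguably cleaner route than the paper: for (a) you split $[0,1]$ at the level $c=\log(B/B')$ and use $\phi''\le A$ below it and $0\le\phi''$, $\phi'\le\phi'(1)=\bmu$ above it (the paper instead evaluates the first integrals at $r=1$ and at the maximizer of $w'$, using its Lemma~4.4), and for (b) you use a compactness/uniqueness argument where the paper reuses its quantitative estimates with $\blambda$ fixed. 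Both of your conditional arguments would be acceptable, but you still need the external input $B'\to\pi^2/2$ from Theorem~\ref{thm:2.1}, exactly as the paper does, and above all a complete proof of the monotone comparison $w'\ge0$.
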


Here we provide a numerical result to support Theorem~\ref{thm:1.1}(b) in Figure~\ref{fig}.
\begin{figure}[h]\centering\includegraphics[scale=0.72]{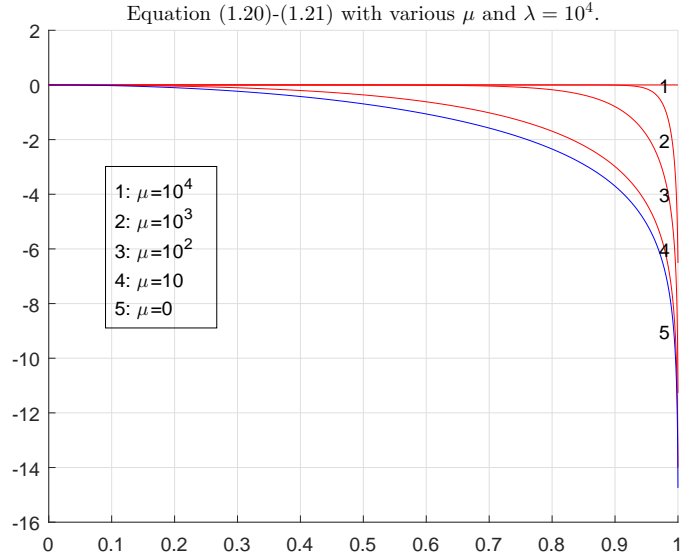}\caption{For $\blambda>0$ fixed, equation \eqref{eq:1.20}--\eqref{eq:1.21} of $v_{\bmu,\blambda}$ approximates equation \eqref{eq:1.05}--\eqref{eq:1.06} of $u_{\blambda}$ as $\bmu$ tends to zero. Given $\blambda=10^4$, the curve 1--5 are associated with $\bmu=10^4, 10^3, 10^2, 10, 0$, respectively.
Note that the curve 1 presents the neutrality ($\bmu=\blambda=10^4$), which implies $v_{\bmu,\blambda}\equiv0$.}
\label{fig}\end{figure}

Setting $w_{\bmu,\blambda}:=v_{\bmu,\blambda}-u_{\blambda}$, we obtain
\begin{align*}
w_{\bmu,\blambda}''\left(r\right)=e^{-v_{\bmu,\blambda}(r)}W_{\bmu,\blambda}(r)\,\,\text{with}\,\,W_{\bmu,\blambda}(r)=\frac{\bmu e^{2v_{{\bmu},\blambda}(r)}}{\int_0^1e^{v_{\bmu,\blambda}(s)}\mathrm{d}s}-\frac{\blambda}{\int_0^1e^{-v_{\bmu,\blambda}(s)}\mathrm{d}s}+\frac{\blambda e^{w_{\bmu,\blambda}(r)}}{\int_0^1e^{-u_{\blambda}(s)}\mathrm{d}s}.
\end{align*}
We shall briefly sketch the proof of Theorem~\ref{thm:1.1} as follows.
\begin{itemize}
\item The basic properties of $v_{\bmu,\blambda}$ and $w_{\blambda}$ will be stated in Section~\ref{subsec:4.1}. Furthermore, in Lemma~\ref{lma:4.2} we carefully deal with $W_{\bmu,\blambda}$ for the case $\blambda>\bmu>0$, and show that $w_{\bmu,\blambda}$ has neither local maximum nor local minimum in $(0,1)$. Along with $w_{\bmu,\blambda}'(1)=v_{\bmu,\blambda}'(1)-u_{\blambda}'(1)=\bmu>0$, this particularly shows the monotonic increase of $w_{\bmu,\blambda}$ with~$r$.
\item A significant idea for proving Theorem~\ref{thm:1.1}(a) and (b) is to establish the following estimates:
\begin{itemize}
\item[\textbf{(1)}] For $w_{\bmu,\blambda}$, we obtain
\begin{equation}
\label{eq:1.29}
0\leq\max_{\left[0,1\right]}{w_{\bmu,\blambda}}={w_{\bmu,\blambda}(1)}<\log\bigg(\left(1-\frac{\bmu}{\blambda}\right)^2+\bmu\blambda\underbrace{\left(2-\frac{\bmu}{\blambda}\right)\frac{e^{-u_{\blambda}(1)}}{\blambda^2}}_{\mathop{\text{uniformly\,\,bounded}}\limits_{\mbox{\scriptsize\text{as}\,\,$\blambda\to\infty$}}}\bigg)\,\,\underset{\bmu\blambda\to0}{\overset{\blambda\,\,\text{fixed\,\,or}\,\,\blambda\to\infty}{-\!\!\!-\!\!\!-\!\!\!-\!\!\!-\!\!\!-\!\!\!-\!\!\!-\!\!\!-\!\!\!-\!\!\!-\!\!\!\rightarrow}}0
\end{equation}
(cf. \eqref{eq:4.15}--\eqref{eq:4.16} for $\blambda\to\infty$; \eqref{eq:4.24} for $\blambda>0$ fixed). Here we shall stress that in \eqref{eq:1.29}, the condition~\eqref{eq:1.26} seems optimal since $\displaystyle\lim_{\blambda\to\infty}\textstyle\frac{e^{-u_{\blambda}(1)}}{\blambda^{2}}=\pi^{-2}$ (see \eqref{eq:2.05}).
\item[\textbf{(2)}] For $w_{\bmu,\blambda}'$, we obtain
\[
\begin{aligned}
\max_{\left[0,1\right]}w_{\bmu,\blambda}'^2:=w_{\bmu,\blambda}'^2(r_{\bmu,\blambda}^*)\leq&\,-\frac{\bmu(2e^{v_{\bmu,\blambda}(r_{\bmu,\blambda}^*)}-1)}{\int_0^1e^{v_{\bmu,\blambda}(s)}\mathrm{d}s}\\
&\,+\blambda\left(\frac1{\int_0^1e^{-v_{\bmu,\blambda}(s)}\mathrm{d}s}-\frac1{\int_0^1e^{-u_{\blambda}(s)}\mathrm{d}s}\right)\underset{\bmu\blambda\to0}{\overset{\blambda\,\,\text{fixed\,\,or}\,\,\blambda\to\infty}{-\!\!\!-\!\!\!-\!\!\!-\!\!\!-\!\!\!-\!\!\!-\!\!\!-\!\!\!-\!\!\!-\!\!\!-\!\!\!\rightarrow}}0
\end{aligned}
\]
(cf. \eqref{eq:4.21} and Lemma~\ref{lma:4.4}  for $\blambda\to\infty$; \eqref{eq:4.27}--\eqref{eq:4.28} for $\blambda>0$ fixed). 
\end{itemize}
\end{itemize}
Consequently, we obtain $\displaystyle\lim_{\mathop{\blambda\,\,\text{fixed\,\,or}\,\,\blambda\to\infty}\limits_{\mbox{\scriptsize $\bmu\blambda\to0$}}}\left\|w_{\bmu,\blambda}\right\|_{\mathcal{C}^1([0,1])}=0$. 
The detailed proof of Theorem~\ref{thm:1.1} will be stated in Sections~\ref{subsec:4.2}--\ref{subsec:4.4}.

In Theorems~\ref{thm:2.1}--\ref{thm:2.2} below, we will establish the refined far-field and near-field expansions for $u_{\blambda}$ with respect to $\blambda\gg1$.
Combining Theorem~\ref{thm:1.1}(a) with Theorems~\ref{thm:2.1}--\ref{thm:2.2}, we can obtain refined asymptotic expansions of $v_{\bmu,\blambda}$ as $\blambda\gg1$ and $0<\bmu\blambda\ll1$.
Various asymptotics of $v_{{\bmu},\blambda}$ and $u_{\blambda}$ can be presented as follows.

\setlength{\unitlength}{1cm}
\thicklines
\begin{picture}(17,3.3)
\put(5,2.3){\mbox{$v_{{\bmu},\blambda}$}}
\put(6,2.3){\vector(1,0){3}}
\put(9.1,2.3){\mbox{$u_{\blambda}$}}
\put(6.8,2.5){\text{\small (uniformly)}}
\put(6.1,1.9){\text{\small $\blambda$ fixed and ${\bmu}\to0$}}
\put(10,2.5){\text{\small (pointwise)}}
\put(12,2.2){\text{$U:=\displaystyle\lim_{\blambda\to\infty}u_{\blambda}$}}
\put(9.8,2.3){\vector(1,0){2}}
\put(10.3,1.9){\text{\small $\blambda\to\infty$}}

\put(5,1){\mbox{$v_{{\bmu},\blambda}-u_{\blambda}$}}
\put(6.6,1){\vector(1,0){3.5}}
\put(10.2,0.9){\mbox{$0$}}
\put(7.5,1.2){\text{\small (uniformly)}}
\put(6.9,0.5){\text{\small $\blambda\to\infty$ and $\bmu\blambda\to0$}}
\end{picture}

{\bf Organization of the paper.}
The rest of the paper is organized as follows. In Section~\ref{sec2} we will state the main results about the far-field expansions and the near-field expansions of $u_{\blambda}$ in Theorems~\ref{thm:2.1} and \ref{thm:2.2}, respectively. Based on such asymptotic expansions, we establish in Corollary~\ref{coro:2.3} for the refined asymptotic expansions of $\rho_{\blambda}$ and the related concentration phenomenon of $\frac{\rho_{\blambda}}{\blambda}$ as $\blambda\to\infty$. Afterwards, we prove Theorems~\ref{thm:2.1}--\ref{thm:2.2} and Corollary~\ref{coro:2.3} in Section~\ref{sec3}. We shall stress that Theorem~\ref{thm:2.2} plays a crucial role in the proof of Theorem~\ref{thm:1.1}. In Section~\ref{subsec:4.1} we introduce some basic properties of $v_{\bmu,\blambda}$. For Theorem~\ref{thm:1.1}, we will state the proof of (a) in Sections~\ref{subsec:4.2}--\ref{subsec:4.3} and the proof of (b) in Section~\ref{subsec:4.4}. Finally, we provide an application to calculating the capacitances and discuss such result in Section~\ref{sec5}.

\section{The main results of \texorpdfstring{\eqref{eq:1.05}}{(1.5)}--\texorpdfstring{\eqref{eq:1.06}}{(1.6)}}
\label{sec2}

Throughout the whole paper, we denote $o_{\blambda}$ as the quantity tending to zero as $\blambda$ goes to infinity. We are now in a position to state the main results about the far-field and near-field expansions of the solution to \eqref{eq:1.05}--\eqref{eq:1.06} as follows.

\begin{thm}[Far-field expansions of $u_{\blambda}$]
\label{thm:2.1}
For $\blambda>0$, let $u_{\blambda}\in \mathcal{C}^2([0,1])$ be the unique solution to \eqref{eq:1.05}--\eqref{eq:1.06}. Then as $\blambda\gg1$,
\begin{equation}
\label{eq:2.01}
\int_0^1e^{-u_{\blambda}(s)}\mathrm{d}s=\frac{2}{\pi^2}\left(\blambda+4+\frac{4}\blambda\left(1+o_{\blambda}\right)\right)\to\infty.
\end{equation}
Moreover, for any compact subset $K$ of $[0,1)$, there holds
\begin{align}
\label{eq:2.02}
\blambda^2\left\|u_{\blambda}(r)-U(r)-\pi{r}\left(\frac{\sin({\pi}r)}{\blambda}-\frac{\pi{r}+2\sin({\pi}r)}{\blambda^2}\right)\sec^2\left(\frac{\pi}{2}r\right)\right\|_{\mathcal{C}^1(K)}\stackrel{\blambda\to\infty}{-\!\!\!-\!\!\!-\!\!\!\rightarrow}0,
\end{align}
where $U(r)=2\log\cos\left(\frac{\pi}{2}r\right)$ is the unique solution to \eqref{eq:1.12} and $\left\|\,\cdot\,\right\|_{\mathcal{C}^1(K)}$ is defined in \eqref{eq:1.15}.
\end{thm}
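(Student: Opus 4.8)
The plan is to exploit the fact that, although \eqref{eq:1.05} is nonlocal, for each fixed $\blambda$ the quantity $\kappa_{\blambda}:=\blambda\big/\!\int_0^1 e^{-u_{\blambda}(s)}\mathrm{d}s$ is merely a positive number, so $u_{\blambda}$ solves the \emph{local} initial value problem $-v''=\kappa_{\blambda}e^{-v}$, $v(0)=v'(0)=0$. By uniqueness for this IVP and the explicit solution recorded in \eqref{eq:1.10}, together with the assumed $\mathcal{C}^2([0,1])$ regularity of $u_{\blambda}$ (which forbids blow-up before $r=1$), one obtains $\kappa_{\blambda}<\pi^2/2$ — i.e.\ the strict inequality in \eqref{eq:1.11} — and the closed form
\[
u_{\blambda}(r)=2\log\cos(\beta_{\blambda}r),\qquad \beta_{\blambda}:=\sqrt{\kappa_{\blambda}/2}\in(0,\pi/2).
\]
Substituting this back into the definition of $\kappa_{\blambda}$ through $\int_0^1 e^{-u_{\blambda}}=\int_0^1\sec^2(\beta_{\blambda}r)\,\mathrm{d}r=\tan\beta_{\blambda}/\beta_{\blambda}$ collapses the entire problem to the scalar transcendental equation
\[
2\beta_{\blambda}\tan\beta_{\blambda}=\blambda,\qquad \beta_{\blambda}\in(0,\pi/2).
\]

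Next I would carry out the asymptotics of $\beta_{\blambda}$. Since $\beta\mapsto 2\beta\tan\beta$ is a strictly increasing bijection from $(0,\pi/2)$ onto $(0,\infty)$, the root $\beta_{\blambda}$ is unique and $\beta_{\blambda}\uparrow\pi/2$ as $\blambda\to\infty$ (because $\blambda\to\infty$ with $\beta_{\blambda}$ bounded forces $\tan\beta_{\blambda}\to\infty$). Writing $\delta_{\blambda}:=\tfrac{\pi}{2}-\beta_{\blambda}\downarrow0$, using $\tan\beta_{\blambda}=\cot\delta_{\blambda}$ and the Laurent expansion $\cot\delta=\delta^{-1}-\delta/3+O(\delta^3)$ turns the equation into $\blambda=(\pi-2\delta_{\blambda})\cot\delta_{\blambda}=\pi/\delta_{\blambda}-2-\pi\delta_{\blambda}/3+O(\delta_{\blambda}^2)$; inverting this by iteration yields
\[
\delta_{\blambda}=\frac{\pi}{\blambda}-\frac{2\pi}{\blambda^2}+O\!\left(\frac1{\blambda^3}\right),\qquad\text{equivalently}\qquad \beta_{\blambda}=\frac{\pi}{2}-\frac{\pi}{\blambda}+\frac{2\pi}{\blambda^2}+O\!\left(\frac1{\blambda^3}\right).
\]
Feeding this into $\int_0^1 e^{-u_{\blambda}}=\cot\delta_{\blambda}\big/(\tfrac{\pi}{2}-\delta_{\blambda})$ and expanding through order $1/\blambda$ gives \eqref{eq:2.01}.

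For the $\mathcal{C}^1(K)$ statement \eqref{eq:2.02} I would Taylor expand $u_{\blambda}(r)=2\log\cos(\beta_{\blambda}r)$ and $u_{\blambda}'(r)=-2\beta_{\blambda}\tan(\beta_{\blambda}r)$ in the parameter $\beta_{\blambda}$ about $\pi/2$. This is legitimate uniformly on any compact $K\subset[0,1)$ since $\cos(\tfrac{\pi}{2}r)$ is bounded away from $0$ there, so $F(\beta,r):=2\log\cos(\beta r)$ is smooth in $\beta$ near $\pi/2$ with $F(\tfrac{\pi}{2},r)=U(r)$, $F_\beta(\tfrac{\pi}{2},r)=-2r\tan\tfrac{\pi r}{2}$, $F_{\beta\beta}(\tfrac{\pi}{2},r)=-2r^2\sec^2\tfrac{\pi r}{2}$. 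Inserting $\beta_{\blambda}-\tfrac{\pi}{2}=-\pi/\blambda+2\pi/\blambda^2+O(1/\blambda^3)$ and using the identity $\sin(\pi r)\sec^2(\tfrac{\pi}{2}r)=2\tan(\tfrac{\pi}{2}r)$ to rewrite the result in exactly the form displayed in \eqref{eq:2.02} leaves a remainder $O(1/\blambda^3)$, uniformly on $K$. The same computation for $u_{\blambda}'$ (equivalently, term-by-term differentiation of the $\blambda$-expansion, whose coefficients are smooth functions of $r$ on $K$) handles the derivative, and the two estimates together give $\blambda^2\|\cdot\|_{\mathcal{C}^1(K)}\to0$.

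The reduction in the first paragraph is the conceptual crux; once it is in place the remaining work is elementary. The step demanding the most care is the asymptotic inversion of $2\beta\tan\beta=\blambda$: one must carry the Laurent expansion of $\cot$ far enough and keep explicit, uniform control of the $O(\cdot)$ remainders so that the $1/\blambda$ coefficient in \eqref{eq:2.01} and the $1/\blambda^2$ coefficient in \eqref{eq:2.02} come out correctly, and one must also verify that these remainders survive the substitutions $\beta_{\blambda}\mapsto u_{\blambda},u_{\blambda}'$ without degrading below $o(1/\blambda^2)$.
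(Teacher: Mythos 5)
Your route is essentially the paper's: you reduce to the closed form $u_{\blambda}(r)=2\log\cos(\beta_{\blambda}r)$ (the paper quotes this representation from \cite{Cartailler2017}, while you rederive it from uniqueness of the local IVP, which is a nice self-contained touch), obtain the same transcendental relation $2\beta_{\blambda}\tan\beta_{\blambda}=\blambda$ (the paper's \eqref{eq:6.02}), invert it asymptotically, and then Taylor-expand $2\log\cos(\beta_{\blambda}r)$ and $-2\beta_{\blambda}\tan(\beta_{\blambda}r)$ about $\beta=\pi/2$ uniformly on compact $K\subset[0,1)$, exactly as in \eqref{eq:3.06}--\eqref{eq:3.11}. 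Your expansion $\beta_{\blambda}=\frac{\pi}{2}-\frac{\pi}{\blambda}+\frac{2\pi}{\blambda^2}+O(\blambda^{-3})$ agrees with Lemma~\ref{lma:3.1}, and with remainder $o(\blambda^{-2})$ it is indeed enough for \eqref{eq:2.02}, since on $K$ the error in $u_{\blambda}$ and $u_{\blambda}'$ is Lipschitz in $\beta_{\blambda}$.

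The one step that does not work as you have quantified it is the derivation of \eqref{eq:2.01}. You propose to substitute $\delta_{\blambda}=\frac{\pi}{\blambda}-\frac{2\pi}{\blambda^2}+O(\blambda^{-3})$ into $I_{\blambda}=\cot\delta_{\blambda}/(\tfrac{\pi}{2}-\delta_{\blambda})$; but the dominant factor $1/\delta_{\blambda}\approx\blambda/\pi$ amplifies an $O(\blambda^{-3})$ uncertainty in $\delta_{\blambda}$ into an $O(\blambda^{-1})$ uncertainty in $I_{\blambda}$, which is precisely the size of the $\frac{4}{\blambda}$ term you are trying to identify, so the coefficient $4$ at order $\blambda^{-1}$ is not determined at this precision. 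Two fixes stay inside your framework: either carry the inversion one more order, $\delta_{\blambda}=\frac{\pi}{\blambda}-\frac{2\pi}{\blambda^2}+\frac{\pi(12-\pi^2)}{3\blambda^{3}}+o(\blambda^{-3})$ (equivalently the $\blambda^{-3}$ term of $J_{\blambda}$ in \eqref{eq:3.03}, which is what the paper computes), or instead use the identity $I_{\blambda}=\blambda/(2\beta_{\blambda}^{2})=\blambda/J_{\blambda}$, as the paper does, which is insensitive enough that your stated precision $\beta_{\blambda}=\frac{\pi}{2}-\frac{\pi}{\blambda}+\frac{2\pi}{\blambda^2}+o(\blambda^{-2})$ already yields $I_{\blambda}=\frac{2}{\pi^2}\bigl(\blambda+4+\frac{4}{\blambda}(1+o_{\blambda})\bigr)$. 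With that adjustment the argument is complete and matches the paper's proof.
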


\begin{remark}\label{remark1}
\eqref{eq:2.02} gives the precise first three terms of $u_{\blambda}(r)$ and $u_{\blambda}'(r)$ with respect to $\blambda\gg1$, for $r\in[0,1)$:
\begin{equation}\label{eq:2.03}
\begin{aligned}
&u_{\blambda}(r)=U(r)+\pi r\left(\frac{\sin(\pi r)}{\blambda}-\frac{\pi r+2\sin(\pi r)}{\blambda^2}\right)\sec^2\left(\frac\pi2r\right)+O\left(\frac1{\blambda^3}\right),
\\&
u'_{\blambda}=U'(r)+\frac{2\pi\tan(\pi r/2)+\pi^2r\sec^2(\pi r/2)}{\blambda}\\&\qquad-\frac{4\pi\tan(\pi r/2)+4\pi^2r\sec^2(\pi r/2)+\pi^3r^2\sec^2(\pi r/2)\tan(\pi r/2)}{\blambda^2}+O\left(\frac1{\blambda^3}\right)\end{aligned}
\end{equation}
In particular, one obtains
\begin{equation}
\label{eq:2.04}
\lim_{\blambda\to\infty}u_{\blambda}(r)=U(r)\text{  and   }~\lim_{\blambda\to\infty}u_{\blambda}'(r)=U'(r)=-\pi\tan\left(\frac{\pi}{2}r\right).
\end{equation}
We stress that \eqref{eq:2.04} can be derived directly from \cite{Cartailler2017}. However, the argument in \cite{Cartailler2017} seems difficult to establish \eqref{eq:2.03} due to the lack of \eqref{eq:3.03}.
\end{remark}

\begin{thm}[Near-field expansions of $u_{\blambda}$]\label{thm:2.2}
Under the same hypotheses as in Theorem~\ref{thm:2.1}, as $\blambda\gg1$, we have
\begin{equation}\label{eq:2.05}
u_{\blambda}(1)=2\log\frac{1}{\blambda}+2\log\pi-\frac{4}{\blambda}(1+o_{\blambda}).
\end{equation}
Moreover, for $\alpha,~p>0$ independent of $\blambda$, $u_{\blambda}(\rpal)$ asymptotically blows up, which is depicted as follows:
\begin{enumerate}
\item[(a)] If $\alpha>2$, i.e., $\blambda^2\mathrm{dist}(\rpal,1)\to0$, then $u_{\blambda}\left(\rpal\right)=-2\log\blambda+2\log\pi-\frac{4}{\blambda}(1+o_{\blambda})$, which shares the same first three terms with $u_{\blambda}(1)$, and $u_{\blambda}'(\rpal)$ and $u_{\blambda}'(1)$ shares the same leading order term, which asymptotically blows up.
In particular, $\left|u_{\blambda}'(\rpal)-u_{\blambda}'(1)\right|\to0$ as $\blambda\to\infty$.
\item[(b)] If $1<\alpha\leq2$, then
\begin{enumerate}
\item[(b1)] $u_{\blambda}(\rpal)$ and $u_{\blambda}(1)$ share the same first two terms:
\begin{equation}
\label{eq:2.06}
u_{\blambda}(\rpal)=2\log\frac{1}{\blambda}+2\log\pi+\begin{cases}
\displaystyle\frac{p-4}{\blambda}\left(1+o_{\blambda}\right)&\mbox{if }\alpha=2,\\[3mm]
\displaystyle\frac{p}{\blambda^{\alpha-1}}\left(1+o_{\blambda}\right)&\mbox{if }\alpha\in\left(1,2\right).\end{cases}
\end{equation}
\item[(b2)] $u_{\blambda}'(\rpal)$ and $u_{\blambda}'(1)$ share the same leading order term:
\begin{equation}
\label{eq:2.07}
u_{\blambda}'(\rpal)=-\blambda+\frac{p}{2}\blambda^{2-\alpha}(1+o_{\blambda}),\quad\alpha\in(1,2].
\end{equation}
\end{enumerate}
Hence, the effect of $p$ and $\alpha$ occurs at the third order term of $u_{\blambda}(\rpal)$ and the second order term $u_{\blambda}'(\rpal)$.
\item[(c)] If $\alpha=1$, then
\begin{enumerate}
\item[(c1)] $u_{\blambda}(\rpal)$ and $u_{\blambda}(1)$ share the same leading order terms:
\begin{equation}
\label{eq:2.08}
u_{\blambda}(\rpal)=2\log\frac{1}{\blambda}+2\log\frac{\left(p+2\right)\pi}{2}-\frac{4}{\blambda}\left(1+o_{\blambda}\right).
\end{equation}
\item[(c2)] The leading order term of $u_{\blambda}'(\rpal)$ depends on $p$:
\begin{equation}
\label{eq:2.09}
u_{\blambda}'(\rpal)=-\frac{2}{p+2}\blambda+o_{\blambda}.
\end{equation}
\end{enumerate}
Hence, the effect of $p$ occurs at the second order term of $u_{\blambda}(\rpal)$ and the leading order term of $u_{\blambda}'(\rpal)$.
\item[(d)] If $0<\alpha<1$, then
\begin{enumerate}
\item[(d1)] The leading order term of $u_{\blambda}(\rpal)$ depends on $\alpha$ and second order term depends on $p$:
\begin{equation}
\label{eq:2.10}
u_{\blambda}(\rpal)
={2\alpha}\log\frac{1}{\blambda}+
2\log\frac{p\pi}{2}+o_{\blambda}.
\end{equation}
\item[(d2)] The leading order term of $u_{\blambda}'(\rpal)$ depends on $p$ and $\alpha$:
\begin{equation}
\label{eq:2.11}
u_{\blambda}'(\rpal)=-\frac{2}{p}\blambda^{\alpha}+o_{\blambda}.
\end{equation}
\end{enumerate}
\end{enumerate}
\end{thm}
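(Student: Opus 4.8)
\emph{The plan.} The crucial observation is that \eqref{eq:1.05}--\eqref{eq:1.06} is solved explicitly, which reduces the whole theorem to the asymptotics of a single transcendental number. First I would let $\theta_{\blambda}$ be the unique root in $(0,\frac{\pi}{2})$ of $2\theta\tan\theta=\blambda$ (it exists and is unique because $\theta\mapsto2\theta\tan\theta$ increases continuously from $0$ to $+\infty$ on $(0,\frac{\pi}{2})$), and set $\bar u(r):=2\log\cos(\theta_{\blambda}r)$, smooth on $[0,1]$ since $\theta_{\blambda}<\frac{\pi}{2}$. A direct computation gives $-\bar u''=2\theta_{\blambda}^2e^{-\bar u}$, $\bar u(0)=\bar u'(0)=0$, and $\int_0^1e^{-\bar u(s)}\,\mathrm{d}s=\theta_{\blambda}^{-1}\tan\theta_{\blambda}=\blambda/(2\theta_{\blambda}^2)$ (using $2\theta_{\blambda}\tan\theta_{\blambda}=\blambda$); hence $2\theta_{\blambda}^2e^{-\bar u}=\blambda e^{-\bar u}/\int_0^1e^{-\bar u(s)}\,\mathrm{d}s$, so $\bar u$ solves \eqref{eq:1.05}--\eqref{eq:1.06}, and by the uniqueness recalled in Theorem~\ref{thm:2.1}, $u_{\blambda}(r)=2\log\cos(\theta_{\blambda}r)$, $u_{\blambda}'(r)=-2\theta_{\blambda}\tan(\theta_{\blambda}r)$, and $\int_0^1e^{-u_{\blambda}(s)}\,\mathrm{d}s=\blambda/(2\theta_{\blambda}^2)$. (In particular this already gives \eqref{eq:1.11}, and \eqref{eq:2.01} once $\theta_{\blambda}$ is expanded.)

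\emph{Expanding $\theta_{\blambda}$, and \eqref{eq:2.05}.} Writing $\theta_{\blambda}=\frac{\pi}{2}-\varepsilon_{\blambda}$ with $\varepsilon_{\blambda}\downarrow0$, the relation $2\theta_{\blambda}\tan\theta_{\blambda}=\blambda$ becomes $(\pi-2\varepsilon_{\blambda})\cot\varepsilon_{\blambda}=\blambda$; feeding in $\cot\varepsilon=\varepsilon^{-1}-\frac{\varepsilon}{3}-\cdots$ and solving by successive approximation I would obtain $\varepsilon_{\blambda}=\frac{\pi}{\blambda}-\frac{2\pi}{\blambda^2}+O(\blambda^{-3})$, i.e. $\varepsilon_{\blambda}=\frac{\pi}{\blambda}\bigl(1-\frac{2}{\blambda}+o(\tfrac{1}{\blambda})\bigr)$ --- this is the refined expansion whose absence in \cite{Cartailler2017} is noted in Remark~\ref{remark1}. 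Then \eqref{eq:2.05} follows by inserting this into $u_{\blambda}(1)=2\log\cos\theta_{\blambda}=2\log\sin\varepsilon_{\blambda}=2\log\varepsilon_{\blambda}-\frac{\varepsilon_{\blambda}^2}{3}+\cdots$.

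\emph{The near-field.} For $\rpal=1-p\blambda^{-\alpha}$ I would write $\theta_{\blambda}\rpal=\frac{\pi}{2}-\delta_{\blambda}$ with $\delta_{\blambda}:=\varepsilon_{\blambda}+\theta_{\blambda}p\blambda^{-\alpha}=\varepsilon_{\blambda}(1-p\blambda^{-\alpha})+\tfrac{p\pi}{2}\blambda^{-\alpha}$, so that $u_{\blambda}(\rpal)=2\log\sin\delta_{\blambda}$ and $u_{\blambda}'(\rpal)=-2\theta_{\blambda}\cot\delta_{\blambda}$; for the latter the addition formula gives the equivalent form $u_{\blambda}'(\rpal)=-\frac{\blambda-2\theta_{\blambda}\tan(\theta_{\blambda}p\blambda^{-\alpha})}{1+\tfrac{\blambda}{2\theta_{\blambda}}\tan(\theta_{\blambda}p\blambda^{-\alpha})}$, which exposes the cancellations. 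Everything then depends on how $\theta_{\blambda}p\blambda^{-\alpha}\asymp\tfrac{p\pi}{2}\blambda^{-\alpha}$ compares with $\varepsilon_{\blambda}\asymp\pi\blambda^{-1}$: for $\alpha>1$, $\varepsilon_{\blambda}$ dominates (and one further splits $\alpha>2$, where $\theta_{\blambda}p\blambda^{-\alpha}$ is negligible even against the $\blambda^{-2}$ correction of $\varepsilon_{\blambda}$, giving part~(a), from $1<\alpha\le2$); for $\alpha=1$, $\delta_{\blambda}=\frac{(p+2)\pi}{2\blambda}\bigl(1-\frac{2}{\blambda}+o(\tfrac{1}{\blambda})\bigr)$, of the same shape as $\varepsilon_{\blambda}$ with $\pi$ replaced by $\tfrac{(p+2)\pi}{2}$; for $0<\alpha<1$, $\theta_{\blambda}p\blambda^{-\alpha}$ dominates and $\delta_{\blambda}=\tfrac{p\pi}{2}\blambda^{-\alpha}\bigl(1+\tfrac{2}{p}\blambda^{\alpha-1}+o(\blambda^{\alpha-1})\bigr)$. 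In each regime I would substitute the expansions of $\varepsilon_{\blambda}$ and $\delta_{\blambda}$ into $2\log\sin\delta_{\blambda}=2\log\delta_{\blambda}-\tfrac{1}{3}\delta_{\blambda}^2+\cdots$ and into the second expression for $u_{\blambda}'(\rpal)$ above, and collect terms; \eqref{eq:2.06}--\eqref{eq:2.11} and part~(a) come out directly, together with the order at which $p$ and $\alpha$ first enter.

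\emph{Main obstacle.} There is no conceptual difficulty past Step~1; the work is bookkeeping, and the delicate point is that for $\alpha\ge1$ the leading coefficients of $u_{\blambda}(\rpal)$ and $u_{\blambda}'(\rpal)$ match those of $u_{\blambda}(1)$ and $u_{\blambda}'(1)$ only after nontrivial cancellations among the $O(1)$ and $O(\blambda^{-1})$ contributions of $\varepsilon_{\blambda}$ and of $\theta_{\blambda}=\frac{\pi}{2}-\varepsilon_{\blambda}$ (for instance at $\alpha=2$ the constants $\tfrac{p-4}{2}$ and $2$ combine to $\tfrac{p}{2}$, which is what makes the second-order term in \eqref{eq:2.07} equal $\tfrac{p}{2}\blambda^{\,2-\alpha}$). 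Carrying $\varepsilon_{\blambda}$ to enough orders and propagating explicit remainder bounds throughout, rather than formal series, keeps the ``$o_{\blambda}$'' terms legitimate and the estimates uniform as $\blambda\to\infty$.
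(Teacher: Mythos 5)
Your proposal is correct and is essentially the paper's own argument: your $\theta_{\blambda}$ solving $2\theta\tan\theta=\blambda$ is exactly $\sqrt{J_{\blambda}/2}$ from the explicit representation \eqref{eq:3.01}--\eqref{eq:3.02} and the transcendental relation \eqref{eq:6.02}, your expansion $\varepsilon_{\blambda}=\frac{\pi}{\blambda}-\frac{2\pi}{\blambda^2}+O(\blambda^{-3})$ is precisely the content of Lemma~\ref{lma:3.1} (proved there by the same successive-approximation scheme), and your case split on how $\frac{p\pi}{2}\blambda^{-\alpha}$ compares with $\varepsilon_{\blambda}$ reproduces the paper's Cases 1--4 in Section~\ref{subsec:3.1}. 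The only differences are cosmetic: you obtain the closed-form solution by direct verification plus uniqueness instead of citing \cite{Cartailler2017}, and you use an addition-formula identity for $u_{\blambda}'(\rpal)$ where the paper expands $\cot$ directly.
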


Thanks to Theorems~\ref{thm:2.1} and \ref{thm:2.2}, we are able to obtain the pointwise asymptotics of $\blambda^{-1}\rho_{\blambda}$.
Moreover, we have  (cf. \eqref{eq:2.12} and \eqref{eq:2.13}) that as $\blambda\gg1$:
\begin{itemize}
\item For $0<\alpha<1/2$, $\displaystyle\sup_{[0,\rpal]}\blambda^{-1}\rho_{\blambda}$ and $\displaystyle\sup_{[0,\rpal]}\blambda^{-1}u_{\blambda}'^2$ tend to zero.
\item For $\alpha=1/2$, $\blambda^{-1}\rho_{\blambda}(\rpal)$ and $\blambda^{-1}u_{\blambda}'^2(\rpal)$ are bounded and have positive lower bound.
\item For $\alpha>1/2$, $\blambda^{-1}\rho_{\blambda}(\rpal)$ and $\blambda^{-1}u_{\blambda}'^2(\rpal)$ asymptotically blow up.
\end{itemize}
More precisely, we obtain the refined far-field and near-field expansions of the net charge density $\rho_{\blambda}$ and the boundary concentration phenomena of $\blambda^{-1}\rho_{\blambda}$ and  $\blambda^{-1}u_{\blambda}'^2$, which are stated as follows.

\begin{coro}\label{coro:2.3} 
Under the same hypotheses as in Theorem~\ref{thm:2.1}, as $\blambda$ approaches infinity, we have 
\begin{enumerate}
\item[(a)] (Far-field expansions of $\rho_{\blambda}$) For any compact subset $K$  (independent of $\blambda$) of $[0,1)$, 
\begin{equation}
\label{eq:2.12}
\rho_{\blambda}(r)=\frac{\pi^2}2\sec^2\left(\frac{\pi}2r\right)-\frac{1}{\blambda}\left[2\pi^2\sec^2\left(\frac{\pi }{2}r\right)+\pi^3\sec^2\left(\frac{\pi }{2}r\right)\tan\left(\frac{\pi }{2}r\right)\right](1+o_{\blambda})
\end{equation}
uniformly in $K$.
\item[(b)] (Near-field expansions of $\rho_{\blambda}$) The asymptotics of $\rho_{\blambda}$ near the boundary is depicted as follows:
\begin{equation}
\label{eq:2.13}
\rho_{\blambda}(\rpal)=
\begin{cases}
\displaystyle\frac{\blambda^2}{2}(1+o_{\blambda})&\mbox{if }\alpha\in(1,\infty),\\[3mm]\displaystyle\frac{2\blambda^2}{(p+2)^2}(1+o_{\blambda})&\mbox{if }\alpha=1,\\[3mm]
\displaystyle
\frac{2\blambda^{2\alpha}}{p^2}(1+o_{\blambda})&\mbox{if }\alpha\in(0,1).
\end{cases}
\end{equation}
\item[(c)] (Concentration phenomenon) Both $\blambda^{-1}\rho_{\blambda}$ and $(2\blambda)^{-1}u_{\blambda}'^2$ behave exactly as Dirac measures supported at boundary point $r=1$, i.e.,
\begin{align}
\label{eq:2.14}
\lim_{\blambda\to\infty}\int_0^1\frac{\rho_{\blambda}(r)}{\blambda}h(r)\mathrm{d}r&=h(1),
\\\label{eq:2.15}
\lim_{\blambda\to\infty}\int_0^1\frac{u_{\blambda}'^2(r)}{2\blambda}h(r)\mathrm{d}r&=h(1),
\end{align}
for any continuous function $h:[0,1]\to\mathbb{R}$ independent of $\blambda$.
\end{enumerate}
\end{coro}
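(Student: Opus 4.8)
\textbf{Proof proposal for Corollary~\ref{coro:2.3}.}

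The plan is to derive everything from the expansions already proved in Theorems~\ref{thm:2.1} and \ref{thm:2.2} together with the identity $\rho_{\blambda}=\blambda e^{-u_{\blambda}}/\int_0^1e^{-u_{\blambda}(s)}\mathrm{d}s = -u_{\blambda}''$. For part (a), on a compact set $K\subset[0,1)$ the far-field expansion \eqref{eq:2.02} gives $u_{\blambda}$ in $\mathcal{C}^1(K)$ up to $o(\blambda^{-2})$; I would note that $\rho_{\blambda}=-u_{\blambda}''$ and either differentiate the expansion once more (using that \eqref{eq:1.05} lets one control $u_{\blambda}''$ by $e^{-u_{\blambda}}$ and the nonlocal constant, so $\mathcal{C}^1$ control of the remainder upgrades to $\mathcal{C}^0$ control of $\rho_{\blambda}$ via the equation), or more directly substitute the $\mathcal{C}^1$-expansion of $u_{\blambda}$ into $\rho_{\blambda}=\blambda e^{-u_{\blambda}}/\int_0^1 e^{-u_{\blambda}}$, using \eqref{eq:2.01} for the denominator. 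Expanding $e^{-u_{\blambda}(r)}=e^{-U(r)}\bigl(1-\tfrac{\pi r\sin(\pi r)}{\blambda}\sec^2(\tfrac\pi2 r)+\cdots\bigr)$ with $e^{-U(r)}=\sec^2(\tfrac\pi2 r)$ and multiplying by $\tfrac{\pi^2}{2\blambda}(\blambda+4+\cdots)^{-1}\cdot\blambda$ should produce \eqref{eq:2.12} after collecting the $\blambda^0$ and $\blambda^{-1}$ terms; the $\tan$ term comes precisely from the $\sin(\pi r)\sec^2(\tfrac\pi2 r)$ factor since $\sin(\pi r)=2\sin(\tfrac\pi2 r)\cos(\tfrac\pi2 r)$, so $\pi r\sin(\pi r)\sec^2(\tfrac\pi2 r)=2\pi r\tan(\tfrac\pi2 r)$—I would double-check the bookkeeping here but it is routine.

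For part (b), the near-field expansion of $u_{\blambda}(\rpal)$ in Theorem~\ref{thm:2.2}(b)--(d) plugs directly into $\rho_{\blambda}(\rpal)=\blambda e^{-u_{\blambda}(\rpal)}/\int_0^1e^{-u_{\blambda}}$. Using \eqref{eq:2.01}, $\blambda/\int_0^1 e^{-u_{\blambda}}=\tfrac{\pi^2}{2}(1+o_{\blambda})$, so $\rho_{\blambda}(\rpal)=\tfrac{\pi^2}{2}e^{-u_{\blambda}(\rpal)}(1+o_{\blambda})$. In case $\alpha\in(1,\infty)$, \eqref{eq:2.05}--\eqref{eq:2.06} give $e^{-u_{\blambda}(\rpal)}=\blambda^2\pi^{-2}(1+o_{\blambda})$, yielding $\tfrac{\blambda^2}{2}(1+o_{\blambda})$; in case $\alpha=1$, \eqref{eq:2.08} gives $e^{-u_{\blambda}(\rpal)}=\blambda^2\bigl(\tfrac{2}{(p+2)\pi}\bigr)^2(1+o_{\blambda})=\tfrac{4\blambda^2}{(p+2)^2\pi^2}(1+o_{\blambda})$, so $\rho_{\blambda}(\rpal)=\tfrac{2\blambda^2}{(p+2)^2}(1+o_{\blambda})$; in case $\alpha\in(0,1)$, \eqref{eq:2.10} gives $e^{-u_{\blambda}(\rpal)}=\blambda^{2\alpha}\bigl(\tfrac{2}{p\pi}\bigr)^2(1+o_{\blambda})$, producing $\tfrac{2\blambda^{2\alpha}}{p^2}(1+o_{\blambda})$. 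This is \eqref{eq:2.13}. The claims about $\blambda^{-1}u_{\blambda}'^2$ follow the same way from \eqref{eq:2.07}, \eqref{eq:2.09}, \eqref{eq:2.11}: e.g. for $\alpha\in(1,2]$, $u_{\blambda}'(\rpal)^2=\blambda^2(1+o_{\blambda})$ so $(2\blambda)^{-1}u_{\blambda}'^2=\tfrac\blambda2(1+o_{\blambda})$, etc.; and the trichotomy at $\alpha=1/2$ is read off by comparing the exponent $2\alpha$ (or $2\alpha$ for the derivative-squared in the $\alpha<1$ regime) with $1$.

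For part (c), the concentration statements \eqref{eq:2.14}--\eqref{eq:2.15}, I would argue by a direct computation rather than via the pointwise expansions, since $h$ is only continuous. For \eqref{eq:2.14}, $\int_0^1\tfrac{\rho_{\blambda}}{\blambda}h\,\mathrm{d}r=\int_0^1\tfrac{e^{-u_{\blambda}(r)}}{\int_0^1e^{-u_{\blambda}(s)}\mathrm{d}s}h(r)\,\mathrm{d}r$; this is an average of $h$ against the probability density $e^{-u_{\blambda}}/\int e^{-u_{\blambda}}$. Since by \eqref{eq:1.08} $u_{\blambda}$ is decreasing, this density is increasing, and the far-field expansion shows $e^{-u_{\blambda}(r)}\to\sec^2(\tfrac\pi2 r)$ locally uniformly on $[0,1)$ while $\int_0^1e^{-u_{\blambda}}\to\infty$; hence for any $\delta>0$ the mass on $[0,1-\delta]$ is $O\bigl(\int_0^{1-\delta}\sec^2\bigr)/\int_0^1 e^{-u_{\blambda}}\to0$, so all the mass escapes to $r=1$, giving weak-$*$ convergence to $\delta_{1}$. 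To make ``mass on $[0,1-\delta]\to 0$'' rigorous I would use \eqref{eq:2.01} for the denominator and dominate the numerator on $[0,1-\delta]$ by $\sec^2(\tfrac\pi2(1-\delta))$ via \eqref{eq:2.04} (or just by monotonicity, $e^{-u_{\blambda}(r)}\le e^{-u_{\blambda}(1-\delta)}$, and $u_{\blambda}(1-\delta)\to U(1-\delta)$ finite). Then split $h=h(1)+(h-h(1))$, bound $|h-h(1)|<\varepsilon$ on $[1-\delta,1]$ by uniform continuity, and conclude. Statement \eqref{eq:2.15} reduces to \eqref{eq:2.14}: multiplying \eqref{eq:1.05} by $u_{\blambda}'$ and integrating gives $u_{\blambda}'^2(r)=2\blambda\int_0^r\tfrac{e^{-u_{\blambda}}}{\int_0^1 e^{-u_{\blambda}}}\,\mathrm{d}s$ (using $u_{\blambda}'(0)=0$), i.e. $\tfrac{u_{\blambda}'^2(r)}{2\blambda}=F_{\blambda}(r)$ where $F_{\blambda}$ is the CDF of the same probability measure; since that measure converges weak-$*$ to $\delta_1$, $F_{\blambda}(r)\to \mathbf{1}_{\{r=1\}}$ pointwise on $[0,1)$ and $F_{\blambda}$ is uniformly bounded by $1$, so $\int_0^1 F_{\blambda}h\to 0$ for... wait—I should instead integrate $\int_0^1 F_{\blambda}(r)h(r)\,\mathrm{d}r$ and note $F_{\blambda}\to0$ pointwise on $[0,1)$ with $0\le F_{\blambda}\le1$, whence by dominated convergence $\int_0^1 F_{\blambda}h\to0$, which contradicts the claimed limit $h(1)$; so the correct reading is that \eqref{eq:2.15} must be interpreted with the measure $\tfrac{u_{\blambda}'^2}{2\blambda}\mathrm{d}r$ and one checks directly its total mass $\int_0^1 F_{\blambda}\to 0$ is wrong too—hence I would re-derive: actually $u_{\blambda}'^2(1)=2\blambda\cdot 1=2\blambda$ so near $r=1$, $\tfrac{u_{\blambda}'^2}{2\blambda}\approx1$, and the mass concentrates because $F_{\blambda}$ jumps from $\approx0$ to $\approx1$ in the boundary layer; to get a Dirac at $1$ one needs $\int_0^1 \tfrac{u_{\blambda}'^2}{2\blambda}\,\mathrm{d}r\to$ something — I expect the intended normalization makes $\tfrac{u_{\blambda}'^2}{2\blambda}$ itself (not times anything) converge weak-$*$ to $\delta_1$ after checking $\int_0^1\tfrac{u_{\blambda}'^2}{2\blambda}=\int_0^1 F_{\blambda}\to 0$...

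The main obstacle is precisely this last point: \eqref{eq:2.15} as literally written cannot hold if $\tfrac{u_{\blambda}'^2}{2\blambda}\mathrm{d}r$ has vanishing total mass, so the honest argument must first establish the correct total mass. I would compute $\int_0^1 u_{\blambda}'^2\,\mathrm{d}r$ by integration by parts: $\int_0^1 u_{\blambda}'^2 = [u_{\blambda}u_{\blambda}']_0^1-\int_0^1 u_{\blambda}u_{\blambda}'' = u_{\blambda}(1)u_{\blambda}'(1)+\int_0^1 u_{\blambda}\rho_{\blambda}$, and using \eqref{eq:1.07}, \eqref{eq:2.05}, and that $\int_0^1 u_{\blambda}\rho_{\blambda}=\blambda\int_0^1 u_{\blambda}e^{-u_{\blambda}}/\int e^{-u_{\blambda}}$ is itself dominated by the concentration of $\rho_{\blambda}/\blambda$ at $1$ (giving $\approx \blambda\, u_{\blambda}(1)$ plus lower order), one finds $\int_0^1 u_{\blambda}'^2 = -\blambda u_{\blambda}(1) + O(\text{lower}) = 2\blambda\log\blambda+O(\blambda)$, so in fact $\int_0^1 \tfrac{u_{\blambda}'^2}{2\blambda}\,\mathrm{d}r\sim\log\blambda\to\infty$. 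Hence the correct normalization is $\tfrac{u_{\blambda}'^2}{2\blambda\log\blambda}$ or the statement means the normalized measure; in any case once the total mass asymptotics is pinned down, the weak-$*$ convergence to $\delta_1$ follows from the same escape-of-mass argument as for \eqref{eq:2.14}, using $F_{\blambda}(r)=\tfrac{u_{\blambda}'^2(r)}{2\blambda}\to 0$ on $[0,1)$ together with monotonicity of $F_{\blambda}$ in $r$. I would present part (c) in this self-contained measure-theoretic form, treating \eqref{eq:2.14} first (where everything is clean) and then \eqref{eq:2.15} via the CDF identity $u_{\blambda}'^2(r)=2\blambda F_{\blambda}(r)$, being careful about the normalization constant.
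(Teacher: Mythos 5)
Parts (a), (b), and your proof of \eqref{eq:2.14} are sound and essentially parallel to the paper: the paper works from the closed form $\rho_{\blambda}(r)=J_{\blambda}\sec^2\bigl(\sqrt{J_{\blambda}/2}\,r\bigr)$ together with Lemma~\ref{lma:3.1}, while you re-exponentiate the already-proved expansions of $u_{\blambda}$ and use \eqref{eq:2.01} for the denominator; these routes are equivalent. (Incidentally, your bookkeeping in (a) produces $\pi^3 r\sec^2(\tfrac{\pi}{2}r)\tan(\tfrac{\pi}{2}r)$ for the second bracketed term, with the factor $r$; the paper's own derivation in Section~\ref{subsec:3.2} yields the same factor, so the discrepancy with the printed \eqref{eq:2.12} is a typo in the display, not a flaw in your method.)

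The genuine gap is your treatment of \eqref{eq:2.15}. The identity you invoke is wrong: multiplying \eqref{eq:1.05} by $u_{\blambda}'$ and integrating over $[0,r]$ gives
\begin{equation*}
\frac12\,u_{\blambda}'^2(r)=\frac{\blambda\bigl(e^{-u_{\blambda}(r)}-1\bigr)}{\int_0^1e^{-u_{\blambda}(s)}\,\mathrm{d}s}
\end{equation*}
(the paper's \eqref{eq:4.10}), not $u_{\blambda}'^2(r)=2\blambda\int_0^r e^{-u_{\blambda}}/\int_0^1e^{-u_{\blambda}}$; integrating the equation once without the multiplier gives instead $-u_{\blambda}'(r)=\blambda F_{\blambda}(r)$, i.e. $u_{\blambda}'^2=\blambda^2F_{\blambda}^2$. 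From the correct identity, $\frac{u_{\blambda}'^2(r)}{2\blambda}=\frac{\rho_{\blambda}(r)}{\blambda}-\frac{1}{I_{\blambda}}$ with $I_{\blambda}=\int_0^1e^{-u_{\blambda}}\to\infty$ by \eqref{eq:2.01}, so the two densities differ by a quantity tending to zero uniformly on $[0,1]$, and \eqref{eq:2.15} is \emph{equivalent} to \eqref{eq:2.14} --- this is exactly the paper's one-line argument, and no renormalization is needed. In particular \eqref{eq:2.15} is true as stated: the total mass is $\int_0^1\frac{u_{\blambda}'^2}{2\blambda}\,\mathrm{d}r=1-\frac{1}{I_{\blambda}}\to1$ (equivalently, using $u_{\blambda}'=-\sqrt{2J_{\blambda}}\tan\bigl(\sqrt{J_{\blambda}/2}\,r\bigr)$ and $\tan\sqrt{J_{\blambda}/2}=\blambda/\sqrt{2J_{\blambda}}$, one gets $\int_0^1u_{\blambda}'^2=2\blambda-2J_{\blambda}$), not $\sim\log\blambda$. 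Your integration-by-parts heuristic fails because the boundary term $u_{\blambda}(1)u_{\blambda}'(1)\approx2\blambda\log\blambda$ and $\int_0^1u_{\blambda}\rho_{\blambda}\approx-2\blambda\log\blambda$ cancel to $O(\blambda)$; having dropped that cancellation, you were led to the incorrect conclusion that the corollary requires the normalization $u_{\blambda}'^2/(2\blambda\log\blambda)$. Replacing that discussion by the energy identity above (or by the exact mass computation) closes the gap, after which your escape-of-mass argument for \eqref{eq:2.14} carries \eqref{eq:2.15} with it.
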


\section{Proof of Theorems~\ref{thm:2.1}--\ref{thm:2.2} and Corollary~\ref{coro:2.3}}
\label{sec3}

It is well-known (cf. \cite[Appendix]{Cartailler2017}) that $u_{\blambda}$ and $\rho_{\blambda}$ can be expressed as
\begin{equation}
\label{eq:3.01}
u_{\blambda}(r)=2\log\cos\left(\sqrt{\frac{J_{\blambda}}2}r\right)~~\mbox{and}~~\rho_{\blambda}(r)=J_{\blambda}\sec^2\left(\sqrt{\frac{J_{\blambda}}{2}}r\right)~~\text{for}~r\in\left[0,1\right],
\end{equation}
where
\begin{equation}
\label{eq:3.02}
J_{\blambda}=\frac{\blambda}{I_{\blambda}}<\frac{\pi^2}{2},\,\,\,\,\text{and}\,\,\,\,I_{\blambda}=\int_0^1e^{-u_{\blambda}(s)}\mathrm{d}s.
\end{equation}

To study the asymptotic behaviour of $u_{\blambda}$, it suffices to establish the refined asymptotic expansions of $I_{\blambda}$ and $J_{\blambda}$ which are stated as follows.
\begin{lma}
\label{lma:3.1}
Under the same hypothesis in Theorem~\ref{thm:2.1}, we have
\begin{equation}
\label{eq:3.03}
J_{\blambda}=\pi^2\left(\frac{1}2-\frac{2}\blambda+\frac{6}{\blambda^2}-\frac{48-2\pi^2}{3\blambda^3}(1+o_{\blambda})\right)\,\,as\,\,\blambda\gg1.\end{equation}
\end{lma}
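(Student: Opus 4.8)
The plan is to push the closed form \eqref{eq:3.01}--\eqref{eq:3.02} all the way, reducing the lemma to the asymptotics of a single transcendental equation. Write $\omega:=\sqrt{J_{\blambda}/2}$, so $\omega\in(0,\tfrac{\pi}{2})$ by \eqref{eq:3.02}. From $u_{\blambda}(r)=2\log\cos(\omega r)$ we get $e^{-u_{\blambda}(r)}=\sec^2(\omega r)$, hence
\[
I_{\blambda}=\int_0^1\sec^2(\omega s)\,\mathrm{d}s=\frac{\tan\omega}{\omega},
\]
and therefore, by $J_{\blambda}=\blambda/I_{\blambda}$ and $J_{\blambda}=2\omega^2$,
\[
\blambda=J_{\blambda}I_{\blambda}=2\omega^2\cdot\frac{\tan\omega}{\omega}=2\omega\tan\omega .
\]
The map $\omega\mapsto2\omega\tan\omega$ is a strictly increasing bijection from $(0,\tfrac{\pi}{2})$ onto $(0,\infty)$, so $\omega=\omega(\blambda)$ is uniquely determined (consistent with the uniqueness of $u_{\blambda}$) and $\omega(\blambda)\uparrow\tfrac{\pi}{2}$ as $\blambda\to\infty$. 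It remains to extract the expansion of $J_{\blambda}=2\omega^2$ from $\blambda=2\omega\tan\omega$.

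Next I would set $\epsilon:=\tfrac{\pi}{2}-\omega\in(0,\tfrac{\pi}{2})$, so that $\epsilon\downarrow0$ as $\blambda\to\infty$ and $\tan\omega=\cot\epsilon$; the relation becomes $\blambda=(\pi-2\epsilon)\cot\epsilon$. Multiplying by $\epsilon$ and using the standard expansion $\epsilon\cot\epsilon=1-\tfrac{\epsilon^2}{3}-\tfrac{\epsilon^4}{45}-\cdots$ (valid for $|\epsilon|<\pi$), one gets $\epsilon\blambda=h(\epsilon)$ with
\[
h(\epsilon):=(\pi-2\epsilon)\,\epsilon\cot\epsilon=\pi-2\epsilon-\frac{\pi}{3}\epsilon^2+\frac{2}{3}\epsilon^3+O(\epsilon^4),
\]
a function real-analytic at $\epsilon=0$ with $h(0)=\pi\neq0$. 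Writing $t:=1/\blambda$, the variable $\epsilon$ therefore solves the fixed-point equation $\epsilon=t\,h(\epsilon)$; since $\partial_\epsilon\bigl(\epsilon-t\,h(\epsilon)\bigr)\big|_{(0,0)}=1\neq0$, the analytic implicit function theorem yields a unique small root $\epsilon=\epsilon(t)$, analytic near $t=0$, with $\epsilon(0)=0$. (Equivalently, for small $t>0$ the map $\epsilon\mapsto t\,h(\epsilon)$ is a contraction on a short interval $[0,2\pi t]$, and one may estimate the remainder directly via Taylor's theorem; either route guarantees that the error below is genuinely $o_{\blambda}$.)

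Finally I would substitute the ansatz $\epsilon=\pi t+\epsilon_2t^2+\epsilon_3t^3+O(t^4)$ into $\epsilon=t\,h(\epsilon)$ and match powers of $t$: the order $t$ gives $\epsilon_1=h(0)=\pi$, the order $t^2$ gives $\epsilon_2=-2\pi$, and the order $t^3$ gives $\epsilon_3=-2\epsilon_2-\tfrac{\pi^3}{3}=4\pi-\tfrac{\pi^3}{3}$, so that
\[
\epsilon=\frac{\pi}{\blambda}-\frac{2\pi}{\blambda^2}+\Bigl(4\pi-\frac{\pi^3}{3}\Bigr)\frac{1}{\blambda^3}+o\!\Bigl(\frac{1}{\blambda^3}\Bigr).
\]
Since $J_{\blambda}=2\omega^2=2\bigl(\tfrac{\pi}{2}-\epsilon\bigr)^2=\tfrac{\pi^2}{2}-2\pi\epsilon+2\epsilon^2$, inserting this expansion (only the first two terms of $\epsilon$ contribute to the $2\epsilon^2$ part up to the order needed) gives
\[
J_{\blambda}=\frac{\pi^2}{2}-\frac{2\pi^2}{\blambda}+\frac{6\pi^2}{\blambda^2}+\frac{1}{\blambda^3}\Bigl(\frac{2\pi^4}{3}-16\pi^2\Bigr)+o\!\Bigl(\frac1{\blambda^3}\Bigr),
\]
and rewriting $\tfrac{2\pi^4}{3}-16\pi^2=-\tfrac{\pi^2(48-2\pi^2)}{3}$ (a nonzero constant, so the $o(1/\blambda^3)$ remainder can be absorbed into a factor $1+o_{\blambda}$) yields exactly \eqref{eq:3.03}. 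The only delicate point is the bookkeeping in the series inversion and, upstream of it, the justification that the formal expansion carries an $o_{\blambda}$ remainder; both are controlled cleanly by the analyticity of $h$ at $\epsilon=0$ — equivalently, by the analyticity of $2\omega\tan\omega$ away from $\omega=\tfrac{\pi}{2}$ once one substitutes $\omega=\tfrac{\pi}{2}-\epsilon$ — which is the crux of the argument.
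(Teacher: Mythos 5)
Your proposal is correct and follows essentially the same route as the paper: both reduce the lemma, via the closed form $u_{\blambda}(r)=2\log\cos(\sqrt{J_{\blambda}/2}\,r)$, to the transcendental relation $\sqrt{2J_{\blambda}}/\blambda=\cot\sqrt{J_{\blambda}/2}$ (your $\blambda=2\omega\tan\omega$) and then expand near $\sqrt{J_{\blambda}/2}=\pi/2$, and your coefficients, including $\tfrac{2\pi^4}{3}-16\pi^2=-\tfrac{\pi^2(48-2\pi^2)}{3}$, agree with \eqref{eq:3.03}. The only difference is organizational: you justify the inversion at one stroke by the analytic implicit function theorem (or a contraction argument) and match powers, whereas the paper bootstraps the coefficients one at a time through the auxiliary quantities $a_{\blambda}$, $b_{\blambda}$, $c_{\blambda}$.
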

The proof of Lemma~\ref{lma:3.1} is elementary so we state it in Appendix.

\begin{remark}\label{rk-20-1111}
Since \eqref{eq:1.05} has a unique solution $u_{\blambda}$ and corresponds to the nonlinear eigenvalue problem \eqref{eq:1.10} with $(U_{\kappa},\kappa)=(u_{\blambda},J_{\blambda})$, this results in $\frac{\blambda}{\int_0^1e^{-u_{\blambda}(s)}\mathrm{d}s}<\frac{\pi^2}{2}$ for any positive $\blambda$. We want to point out that \eqref{eq:3.03} implies a squeezed estimate
\[\pi^2\left(\frac{1}2-\frac{2}\blambda+\frac{6-\epsilon}{\blambda^2}\right)<\frac{\blambda}{\int_0^1e^{-u_{\blambda}(s)}\mathrm{d}s}<\pi^2\left(\frac{1}2-\frac{2}\blambda+\frac{6}{\blambda^2}\right)\,\,as\,\, \blambda>\blambda(\epsilon),
\]
for any sufficiently small  $\epsilon>0$, where $\blambda(\epsilon)$ is a positive constant  depending on $\epsilon$.
\end{remark}

Now, we are in a position to state the proof of Theorem~\ref{thm:2.1}.
\begin{proof}[$\mathbf{Proof~of~ Theorem~\ref{thm:2.1}}$]

\eqref{eq:2.01} follows from \eqref{eq:3.03} since we have
\[
I_{\blambda}=\frac{\blambda}{J_{\blambda}}=\frac{2\blambda}{\pi^2}\left(1-\frac{4}\blambda+\frac{12}{\blambda^2}(1+o_{\boldsymbol{\lambda}})\right)^{-1}=\frac{2}{\pi^2}\left(\blambda+4+\frac{4}\blambda(1+o_{\blambda})\right).
\]
To prove \eqref{eq:2.02}, we need to  establish the precise first third order terms of $u_{\blambda}(r)$ and $u_{\blambda}'(r)$ in $K$.
Firstly, by \eqref{eq:3.01} and \eqref{eq:3.03}, there holds that 
\begin{equation}
\label{eq:3.06}
u_{\blambda}(r)=2\log\cos\left(\frac{\pi}2r-\frac{\pi}{\blambda}r+\frac{2\pi}{\blambda^2}r(1+o_{\blambda})\right)\,\,\mathrm{uniformly\,\,in}\,\,K,\,\,\mathrm{as}\,\,\blambda\to\infty.\end{equation}
Note that $r\in K$ is independent of $\blambda$.
For a sake of convenience, let us set 
\begin{equation}
\label{eq:3.07}
\xi(r)=-\frac{\pi}{\blambda}r+\frac{2\pi}{\blambda^2}r(1+o_{\blambda}).    
\end{equation}
Then, as $\blambda\to\infty$, we have $\xi(r)\to0$ and
\begin{equation}
\label{eq:3.08}
2\log\cos\left(\frac{\pi}{2}r+\xi(r)\right)=2\log\cos\left(\frac{\pi}{2}r\right)-2\xi(r)\tan\left(\frac{\pi}{2}r\right)-\xi^2(r)\sec^2\left(\frac{\pi}{2}r\right)+o(\xi^3(r)),
\end{equation}
uniformly in $K$.
\eqref{eq:3.08} can be obtained directly from the Taylor expansions so we omit the detailed derivation.
As a consequence, by \eqref{eq:3.06}--\eqref{eq:3.08}, 
\begin{equation}
\label{eq:3.09}\begin{aligned}
u_{\blambda}(r)=&2\log\cos\left(\frac{\pi}{2}r\right)+\frac{2\pi r}{\blambda}\tan\left(\frac{\pi}{2}r\right)-\frac{\pi r}{\blambda^2}\left(4\tan\left(\frac{\pi}{2}r\right)+\pi r\sec^2\left(\frac{\pi}{2}r\right)\right)(1+o_{\blambda})
\\=&2\log\cos\left(\frac{\pi}{2}r\right)+\pi r\sec^2\left(\frac{\pi}{2}r\right)\left(\frac{\sin(\pi r)}{\blambda}-\frac{\pi r+2\sin(\pi r)}{\blambda^2}\right)(1+o_{\blambda}),
\end{aligned}
\end{equation}
which gives the precise first third order terms of $u_{\blambda}(r)$.

On the other hand, differentiating \eqref{eq:3.01} to $r$ gives
\begin{equation}
\label{eq:3.10}u_{\blambda}'(r)=-2\sqrt{\frac{J_{\blambda}}2}\tan\left(\sqrt{\frac{J_{\blambda}}2}r\right).
\end{equation}
Hence, by virtue of \eqref{eq:3.03}, \eqref{eq:3.07} and \eqref{eq:3.10}, an expansion of $u_{\blambda}'(r)$ with respect to $\blambda\gg1$ can be expressed as 
\begin{equation}
\label{eq:3.11}
\begin{aligned}
u_{\blambda}'(r)=&\left(-\pi+\frac{2\pi}{\blambda}-\frac{4\pi}{\blambda^2}(1+o_{\blambda})\right)\tan\left(\frac{\pi}{2}r-\frac{\pi}{\blambda}r+\frac{2\pi}{\blambda^2}r(1+o_{\blambda})\right)
\\=&-\pi\tan\left(\frac{\pi}{2}r\right)+\frac{\pi^2r+\pi\sin(\pi r)}{\blambda}\sec^2\left(\frac{\pi}{2}r\right)
\\&-\frac{\pi}{\blambda^2}\left[4\tan\left(\frac{\pi}{2}r\right)+4\pi r\sec^2\left(\frac{\pi}{2}r\right)+\pi^2r^2\sec^2\left(\frac{\pi}{2}r\right)\tan\left(\frac{\pi}{2}r\right)\right](1+o_{\blambda}),
\end{aligned}
\end{equation}
uniformly in $K$.
Here we have used the approximation
\[\tan\left(\frac{\pi}{2}r+\xi(r)\right)=\tan\left(\frac{\pi}{2}r\right)+\xi(r)\sec^2\left(\frac{\pi}{2}r\right)+\xi^2(r)\sec^2\left(\frac{\pi}{2}r\right)\tan\left(\frac{\pi}{2}r\right)+o(\xi^3(r)).\]
Therefore, \eqref{eq:2.02} follows immediately from \eqref{eq:3.09} and  \eqref{eq:3.11}, and we complete the proof of Theorem~\ref{thm:2.1}.
\end{proof}

\subsection{Proof of Theorem~\texorpdfstring{\ref{thm:2.2}}{2.2}}
\label{subsec:3.1}

\eqref{eq:2.05} follows from \eqref{eq:1.09} and \eqref{eq:3.02}--\eqref{eq:3.03}.
By \eqref{eq:3.01} and \eqref{eq:3.10}, we have
\begin{equation}
\label{eq:3.12}
u_{\blambda}(\rpal)=2\log\cos\left(\rpal\sqrt{\frac{J_{\blambda}}2}\right)=2\log\sin\left(\frac{\pi}2-\rpal\sqrt{\frac{J_{\blambda}}2}\right),
\end{equation}
and
\begin{equation}
\label{eq:3.13}
u_{\blambda}'(\rpal)=-\sqrt{2J_{\blambda}}\tan\left(\rpal\sqrt{\frac{J_{\blambda}}2}\right),
\end{equation}
where $\rpal$ is defined in \eqref{eq:1.14}.
On the other hand, 
by \eqref{eq:1.14} and \eqref{eq:3.03}, we have
\begin{equation}
\label{eq:3.14}
\rpal\sqrt{\frac{J_{\blambda}}2}=\left(1-\frac{p}{\blambda^\alpha}\right)\left(\frac{\pi}{2}-\frac{\pi}{\blambda}+\frac{2\pi}{\blambda^2}(1+o_{\blambda})\right).
\end{equation}
Note that the asymptotic expansion of $\rpal\sqrt{J_{\blambda}/2}$ varies with $\alpha$.
To obtain the refined asymptotic expansions of \eqref{eq:3.12} and \eqref{eq:3.13}, we shall deal with the asymptotics of $\rpal\sqrt{J_{\blambda}/2}$ under situations $\alpha>2$, $1<\alpha\leq2$, $\alpha=1$ and $0<\alpha<1$ individually.

\subsubsection*{\textbf{Case 1}. $\alpha>2$:}

In this case, we have $0<\frac{1}{\blambda^{\alpha}}\ll\frac{1}{\blambda^2}$ as $\blambda\gg1$.
Hence,
\begin{equation}
\label{eq:3.15}
\rpal\sqrt{\frac{J_{\blambda}}2}=\left(1-\frac{p}{\blambda^\alpha}\right)\left(\frac{\pi}2-\frac{\pi}\blambda+\frac{2\pi}{\blambda^2}(1+o_{\blambda})\right)=\frac{\pi}2-\frac{\pi}\blambda+\frac{2\pi}{\blambda^2}(1+o_{\blambda}).
\end{equation}
Along with \eqref{eq:3.12}, one may make appropriate manipulations to obtain
\begin{equation}
\label{eq:3.16}
u_{\blambda}\left(\rpal\right)=\,2\log\sin\left(\frac{\pi}{\blambda}-\frac{2\pi}{\blambda^2}(1+o_{\blambda})\right)=\log\frac{1}{\blambda^2}+\log\pi^2-\frac{4}{\blambda}(1+o_{\blambda}).
\end{equation}
On the other hand, by \eqref{eq:3.13}, \eqref{eq:3.15} and Taylor expansions of the cotangent function, one may check that
\begin{equation}
\label{eq:3.17}
\begin{aligned}
u_{\blambda}'(\rpal)=&\left(-\pi+\frac{2\pi}{\blambda}-\frac{4\pi}{\blambda^2}\left(1+o_{\blambda}\right)\right)\tan\left[\left(1-\frac{p}{\blambda^\alpha}\right)\left(\frac{\pi}{2}-\frac{\pi}{\blambda}+\frac{2\pi}{\blambda^2}\left(1+o_{\blambda}\right)\right)\right]
\\=&\left(-\pi+\frac{2\pi}{\blambda}-\frac{4\pi}{\blambda^2}\left(1+o_{\blambda}\right)\right)\cot\left(\frac{\pi}{\blambda}-\frac{2\pi}{\blambda^2}\left(1+o_{\blambda}\right)\right)=-\blambda+o_{\blambda}.
\end{aligned}
\end{equation}
Therefore, Theorem~\ref{thm:2.2}(a) follows from \eqref{eq:3.16} and \eqref{eq:3.17}.

\subsubsection*{Case 2. $1<\alpha\leq2$:}

In this case, \eqref{eq:3.14} gives
\begin{equation}
\label{eq:3.18}
\rpal\sqrt{\frac{J_{\blambda}}2}=\left(1-\frac{p}{\blambda^\alpha}\right)\left(\frac{\pi}2-\frac{\pi}\blambda+\frac{2\pi}{\blambda^2}(1+o_{\blambda})\right)=\frac{\pi}2-\frac{\pi}\blambda-\frac{p\pi}{2\blambda^\alpha}+\frac{2\pi}{\blambda^2}(1+o_{\blambda}).
\end{equation}
Along with \eqref{eq:3.12} one obtains 
\[\begin{aligned}
u_{\blambda}(\rpal)=&2\log\left(\frac{\pi}\blambda+\frac{p\pi}{2\blambda^{\alpha}}-\frac{2\pi}{\blambda^2}(1+o_{\blambda})\right)=2\log\frac{\pi}\blambda+2\log\left(1+\frac{p}{2\blambda^{\alpha-1}}-\frac{2}\blambda(1+o_{\blambda})\right)
\\=&\log\frac{1}{\blambda^2}+\log\pi^2+\begin{cases}
\displaystyle\frac{p-4}{\blambda}(1+o_{\blambda})&\mbox{if }\alpha=2,\\[3mm]
\displaystyle\frac{p}{\blambda^{\alpha-1}}(1+o_{\blambda})&\mbox{if }\alpha\in\left(1,2\right),
\end{cases}
\end{aligned}
\]
and \eqref{eq:2.06} immediately follows from this result.
Combining \eqref{eq:3.13} with \eqref{eq:3.18}
and following similar argument, we get \eqref{eq:2.07} and complete the proof of Theorem~\ref{thm:2.2}(b).

\subsubsection*{Case 3. $\alpha=1$:}

In this case, \eqref{eq:3.14} becomes
\begin{equation}
\label{eq:3.20}
\rpal\sqrt{\frac{J_{\blambda}}2}=\left(1-\frac{p}\blambda\right)\left(\frac{\pi}2-\frac{\pi}\blambda+\frac{2\pi}{\blambda^2}(1+o_{\blambda})\right)=\frac{\pi}2-\frac{(p+2)\pi}{2\blambda}+\frac{(p+2)\pi}{\blambda^2}(1+o_{\blambda}).
\end{equation}
Hence, by inserting \eqref{eq:3.20} into \eqref{eq:3.12} and following the similar argument as in \eqref{eq:3.16}, we obtain
\[
u_{\blambda}(\rpal)=2\log\sin\left(\frac{(p+2)\pi}{2\blambda}-\frac{(p+2)\pi}{\blambda^2}(1+o_{\blambda})\right)=\log\frac{1}{\blambda^2}+\log\frac{(p+2)^2\pi^2}4-\frac{4}\blambda(1+o_{\blambda}).
\]
This implies \eqref{eq:2.08}.
Finally, \eqref{eq:2.09} can be obtained from \eqref{eq:3.13} and \eqref{eq:3.20}.
The proof of Theorem~\ref{thm:2.2}(c) is complete.

\subsubsection*{Case 4. $0<\alpha<1$:}

We want to emphasize that due to $0<\frac{1}{\blambda}\ll\frac{1}{\blambda^{\alpha}}$ as $\blambda\gg1$, the asymptotics of $u_{\blambda}(\rpal)$ is more complicated than previous three cases.

Firstly, by plugging \eqref{eq:3.14} into \eqref{eq:3.12}, one may obtain
\begin{equation}
\label{eq:3.22}
\begin{aligned}
u_{\blambda}(\rpal)=&\,2\log\sin\left(\frac{p\pi}{2\blambda^\alpha}+\frac{\pi}{\blambda}(1+o_{\blambda})\right)
\\=&\,2\log\left[\left(\frac{p\pi}{2\blambda^\alpha}+\frac{\pi}{\blambda}(1+o_{\blambda})\right)-\frac{1}{6}\left(\frac{p\pi}{2\blambda^\alpha}+\frac{\pi}{\blambda}(1+o_{\blambda})\right)^3(1+o_{\blambda})\right].
\end{aligned}
\end{equation}
Note also that $\frac{1}{\blambda^{3\alpha}}\ll\frac{1}{\blambda}$ as $3\alpha>1$, and $\frac{1}{\blambda^{3\alpha}}\gg\frac{1}{\blambda}$ as $0<3\alpha<1$.
We shall deal with \eqref{eq:3.22} under three situations $0<\alpha<1/3$, $\alpha=1/3$ and $\alpha>1/3$.
After simple calculations, we can establish the precise first three terms of $u_{\blambda}(\rpal)$ as follows:
\[
\begin{aligned}
u_{\blambda}(\rpal)&=
\begin{cases}\displaystyle
2\log\left(\frac{p\pi}{2\blambda^\alpha}+\frac{\pi}{\blambda}\left(1+o_{\blambda}\right)\right)&\mbox{if }\alpha\in\left(1/3,1\right),\\[3mm]\displaystyle
2\log\left(\frac{p\pi}{2\blambda^{\alpha}}+\frac{48\pi-p^3\pi^3}{48\blambda}\left(1+o_{\blambda}\right)\right)&\mbox{if }\alpha=1/3,\\[3mm]\displaystyle
2\log\left(\frac{p\pi}{2\blambda^\alpha}-\frac{p^3\pi^3}{48\blambda^{3\alpha}}\left(1+o_{\blambda}\right)\right)&\mbox{if }\alpha\in\left(0,1/3\right),
\end{cases}\\&=\log\frac{1}{\blambda^{2\alpha}}+\log\frac{p^2\pi^2}{4}(1+o_{\blambda}).\end{aligned}
\]
Thus, we obtain \eqref{eq:2.10}.

To prove \eqref{eq:2.11}, we shall deal with the asymptotics of \eqref{eq:3.13} with $0<\alpha<1$.
The argument is similar as the previous cases.
Indeed, by \eqref{eq:3.14} one may check that
\begin{equation}
\label{eq:3.24}
\begin{aligned}
\tan\left(\rpal\sqrt{\frac{J_{\blambda}}2}\right)=&\tan\left[\left(1-\frac{p}{\blambda^\alpha}\right)\left(\frac{\pi}2-\frac{\pi}\blambda(1+o_{\blambda})\right)\right]=\cot\left(\frac{p\pi}{2\blambda^\alpha}+\frac{\pi}\blambda(1+o_{\blambda})\right)
\\=&\frac{1}{\frac{p\pi}{2\blambda^\alpha}+\frac{\pi}{\blambda}(1+o_{\blambda})}=\frac{2\blambda^\alpha}{p\pi}+o_{\blambda}.
\end{aligned}
\end{equation}
Combining \eqref{eq:3.03}, \eqref{eq:3.13} and \eqref{eq:3.24}, we obtain
\[
u_{\blambda}'(\rpal)=\left(-\pi+\frac{2\pi}{\blambda}(1+o_{\blambda})\right)\left(\frac{2\blambda^\alpha}{p\pi}+o_{\blambda}\right)=-\frac{2}{p}\blambda^\alpha+o_{\blambda}.
\]
Therefore, we get \eqref{eq:2.11} and complete the proof of Theorem~\ref{thm:2.2}. 

\subsection{Proof of Corollary~\ref{coro:2.3}}
\label{subsec:3.2}

\eqref{eq:2.12} follows from the combination of \eqref{eq:3.01} and \eqref{eq:3.03} as follows:
\[
\begin{aligned}
\rho_{\blambda}(r)=&\left(\frac{\pi^2}{2}-\frac{2\pi^2}{\blambda}+\frac{6\pi^2}{\blambda^2}\left(1+o_{\blambda}\right)\right)\sec^2\left(\frac{\pi r}{2}-\frac{\pi r}{\blambda}+\frac{2\pi r}{\blambda^2}(1+o_{\blambda})\right)
\\=&\frac{\pi^2}2\sec^2\left(\frac{\pi}2r\right)-\frac{1}{\blambda}\left[2\pi^2\sec^2\left(\frac{\pi }{2}r\right)+\pi^3\sec^2\left(\frac{\pi }{2}r\right)\tan\left(\frac{\pi }{2}r\right)\right](1+o_{\blambda}),\end{aligned}
\]
This completes the proof of Corollary~\ref{coro:2.3}(a).

Now, we shall prove \eqref{eq:2.13}.
Putting $r=\rpal$ into the expression of $\rho_{\blambda}$ in \eqref{eq:3.01} and using \eqref{eq:3.03}, we can obtain
\begin{equation}
\label{eq:3.27}
\begin{aligned}
\rho_{\blambda}(\rpal)&=J_{\blambda}\sec^2\left(\rpal\sqrt{\frac{J_{\blambda}}2}\right)\\&=\left(\frac{\pi^2}{2}-\frac{2\pi^2}{\blambda}+\frac{6\pi^2}{\blambda^2}\left(1+o_{\blambda}\right)\right)\sec^2\left[\left(\frac{\pi}{2}-\frac{\pi}{\blambda}+\frac{2\pi}{\blambda^2}(1+o_{\blambda})\right)\left(1-\frac{p}{\blambda^{\alpha}}\right)\right].
\end{aligned}
\end{equation}
Note that the expansion of $\left(\frac{\pi}{2}-\frac{\pi}{\blambda}+\frac{2\pi}{\blambda^2}(1+o_{\blambda})\right)\left(1-\frac{p}{\blambda^{\alpha}}\right)$ depends variously on $\alpha\in\left(0,1\right)$, $\alpha=1$ and $\alpha\in\left(1,\infty\right)$.
Firstly, we deal with \eqref{eq:3.27} for the case of $0<\alpha<1$ and obtain
\[
\begin{aligned}
\rho_{\blambda}(\rpal)&=\left(\frac{\pi^2}{2}-\frac{2\pi^2}{\blambda}+\frac{6\pi^2}{\blambda^2}(1+o_{\blambda})\right)\sec^2\left(\frac{\pi}{2}-\frac{p\pi}{2\blambda^{\alpha}}+\frac{\pi}{\blambda}(1+o_{\blambda})\right)
\\&=\frac{2\blambda^{2\alpha}}{p^2}(1+o_{\blambda}),
\end{aligned}
\]
Here we have used the standard expansion of the cosecant function to obtain the last identity. Hence, we obtain \eqref{eq:2.13} for the case of $0<\alpha<1$.
By a similar argument, we can also prove \eqref{eq:2.13} for the two cases $\alpha=1$ and $\alpha>1$, and the proof of Corollary~\ref{coro:2.3}(b) is complete.

It remains to prove Corollary~\ref{coro:2.3}(c).
Note that $\displaystyle\lim_{\blambda\to\infty}\frac1{\int_0^1e^{-u_{\blambda}(s)}\mathrm{d}s}=0$ (by \eqref{eq:2.01}).
Along with \eqref{eq:1.05}, we arrive at\[\limsup_{\blambda\to\infty}\left(\frac{\rho_{\blambda}(r)h(r)}{\blambda}-\frac{u_{\blambda}'^2(r)h(r)}{2\blambda}\right)=\lim_{\blambda\to\infty}\frac{h(r)}{\int_0^1e^{-u_{\blambda}(s)}\mathrm{d}s}=0~~\mbox{uniformly~on~}\left[0,1\right]~\mbox{as}~\blambda\to\infty,\]
where $h$ is a continuous function on $\left[0,1\right]$.
This indicates that \eqref{eq:2.14} and \eqref{eq:2.15} are equivalent.
Hence, it suffices to claim \eqref{eq:2.14}, i.e.,
\begin{equation}
\label{eq:3.29}
\lim_{\blambda\to\infty}\int_0^1\frac{e^{-u_{\blambda}(r)}h(r)}{\int_0^1e^{-u_{\blambda}(s)}\mathrm{d}s}\mathrm{d}r=h(1).
\end{equation}
Let $\kappa\in(0,1)$ be fixed.
Then we observe that
\begin{equation}
\label{eq:3.30}
\begin{aligned}
\left|\int_0^1\frac{e^{-u_{\blambda}(r)}}{\int_0^1e^{-u_{\blambda}(s)}\mathrm{d}s}h(r)\mathrm{d}r-h(1)\right|&=\left|\left(\int_0^{1-\blambda^{-\kappa}}+\int_{1-\blambda^{-\kappa}}^1\right)\frac{e^{-u_{\blambda}(r)}}{\int_0^1e^{-u_{\blambda}(s)}\mathrm{d}s}(h(r)-h(1))\mathrm{d}r\right|
\\&\leq2\max_{r\in[0,1]}|h(r)|\left(\int_0^{1-\blambda^{-\kappa}}\frac{e^{-u_{\blambda}(r)}}{\int_0^1e^{-u_{\blambda}(s)}\mathrm{d}s}\mathrm{d}r\right)\\&~~~+\max_{r\in[1-\blambda^{-\kappa},1]}|h(r)-h(1)|\left(\int_{1-\blambda^{-\kappa}}^1\frac{e^{-u_{\blambda}(r)}}{\int_0^1e^{-u_{\blambda}(s)}\mathrm{d}s}\mathrm{d}r\right).
\end{aligned}
\end{equation}
Moreover, from \eqref{eq:1.05} and Theorem~\ref{thm:2.1}(d), a direct computation gives
\[\lim_{\blambda\to\infty}\int_{1-\blambda^{-\kappa}}^1\frac{e^{-u_{\blambda}(r)}}{\int_0^1e^{-u_{\blambda}(s)}\mathrm{d}s}\mathrm{d}r=-\lim_{\blambda\to\infty}\int_{1-\blambda^{-\kappa}}^1\frac{u_{\blambda}''(r)}{\blambda}\mathrm{d}r=\lim_{\blambda\to\infty}\frac{u_{\blambda}'(1-\blambda^{-\kappa})-u_{\blambda}'(1)}{\blambda}=1,
\]
which also implies
\begin{equation}
\label{eq:3.32}
\lim_{\blambda\to\infty}\int_0^{1-{\blambda}^{-\kappa}}\frac{e^{-u_{\blambda}(r)}}{\int_0^1e^{-u_{\blambda}(s)}\mathrm{d}s}\mathrm{d}r=0.
\end{equation}
Finally, we notice that the continuity of $h$ implies $\displaystyle\lim_{\blambda\to\infty}\max_{r\in\left[1-{\blambda}^{-\kappa},1\right]}|h(r)-h(1)|=0$.
As a consequence, \eqref{eq:3.29} immediately follows \eqref{eq:3.30}--\eqref{eq:3.32} and we prove Corollary~\ref{coro:2.3}(c).

Therefore, the proof of Corollary~\ref{coro:2.3} is completed.

\section{Proof of Theorem~\ref{thm:1.1}}
\label{sec4}

In order to deal with the convergence of the $v_{\bmu,\blambda}-u_{\blambda}$ with respect to $\bmu\blambda\to0^+$, throughout the whole section we shall set
\begin{equation}
\label{eq:4.01}
w_{\bmu,\blambda}:=v_{\bmu,\blambda}-u_{\blambda}.
\end{equation}
Subtracting \eqref{eq:1.05} from \eqref{eq:1.20} and using \eqref{eq:4.01} gives
\begin{equation}
\label{eq:4.02}
w_{\bmu,\blambda}''\left(r\right)=\frac{\bmu e^{v_{\bmu,\blambda}(r)}}{\int_0^1e^{v_{\bmu,\blambda}(s)}\mathrm{d}s}+\blambda\left(\frac{e^{-u_{\blambda}(r)}}{\int_0^1e^{-u_{\blambda}(s)}\mathrm{d}s}-\frac{e^{-v_{\bmu,\blambda}(r)}}{\int_0^1e^{-v_{\bmu,\blambda}(s)}\mathrm{d}s}\right)\end{equation}
and
\begin{equation}\label{eq:4.03}
w_{\bmu,\blambda}''(r)=e^{-v_{\bmu,\blambda}(r)}\left(\frac{\bmu e^{2v_{{\bmu},\blambda}(r)}}{\int_0^1e^{v_{\bmu,\blambda}(s)}\mathrm{d}s}-\frac{\blambda}{\int_0^1e^{-v_{\bmu,\blambda}(s)}\mathrm{d}s}+\frac{\blambda e^{w_{\bmu,\blambda}(r)}}{\int_0^1e^{-u_{\blambda}(s)}\mathrm{d}s}\right).
\end{equation}
We will sometimes use identity~\eqref{eq:4.02} or identity~\eqref{eq:4.03} to estimate $w_{\bmu,\blambda}$ and $w_{\bmu,\blambda}'$ for a sake of convenience.

Since we know that both $u_{\blambda}$ and $v_{\bmu,\blambda}$ are strictly decreasing on $[0,1]$ (by Lemma~\ref{lma:4.1}), the main difficulty of Theorem~\ref{thm:1.1} is to obtain the monotonicity of $w_{\bmu,\blambda}$, which will be presented in Lemma~\ref{lma:4.2}.
To complete the proof of Theorem~\ref{thm:1.1}(a), in Sections~\ref{subsec:4.2} and \ref{subsec:4.3},  we will prove
\begin{equation}
\label{eq:4.04}
\lim_{\mathop{\blambda\to\infty}\limits_{\mbox{\scriptsize $\bmu\blambda\to0$}}}\max_{\left[0,1\right]}\left|w_{\bmu,\blambda}\right|=0
\end{equation}
and
\begin{equation}
\label{eq:4.05}
\lim_{\mathop{\blambda\to\infty}\limits_{\mbox{\scriptsize $\bmu\blambda\to0$}}}\max_{\left[0,1\right]}\left|w_{\bmu,\blambda}'\right|=0,
\end{equation}
respectively.
When $\blambda>0$ is fixed and $\bmu\to0^+$, the proof of Theorem~\ref{thm:1.1}(b) is based on preliminary estimates in Section~\ref{subsec:4.1}--\ref{subsec:4.3}.
We will briefly state the proof in section~\ref{subsec:4.4}.

\subsection{Some basic properties}
\label{subsec:4.1}

Since $\blambda>\bmu>0$, one can follow the same argument as in  \cite[Lemma 2.1]{Lee2019} and \cite[Proposition 2.1]{Lee2011} to obtain $v_{\bmu,\blambda}''(r)=\frac{\bmu e^{v_{\bmu,\blambda}(r)}}{\int_0^1e^{v_{\bmu,\blambda}(s)}\text{d}s}-\frac{\blambda e^{-v_{\bmu,\blambda}(r)}}{\int_0^1e^{-v_{\bmu,\blambda}(s)}\text{d}s}<0$ for all $r\in(0,1]$.
Along with \eqref{eq:1.21}, it yields
\[v_{\bmu,\blambda}'(r)<v_{\bmu,\blambda}'(0)=0~\text{and}~v_{\bmu,\blambda}(r)<v_{\bmu,\blambda}(0)=0~\text{for all}~r\in(0,1].\]
As a consequence, we obtain the following property.
\begin{lma}
\label{lma:4.1}
For $\blambda>\bmu>0$, let $v_{\bmu,\blambda}\in\mathcal{C}^2([0,1])$ be the unique solution to \eqref{eq:1.20}--\eqref{eq:1.21}.
Then $v_{\bmu,\blambda}$ is strictly decreasing and strictly concave downward on $\left[0,1\right]$.\end{lma}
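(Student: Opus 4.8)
The lemma is essentially a repackaging of the sign facts quoted in the paragraph preceding it, so the plan is to indicate how the one nontrivial ingredient — $v_{\bmu,\blambda}''<0$ on $(0,1]$ — is obtained, the monotonicity and concavity claims then being immediate. Write $v:=v_{\bmu,\blambda}$ for brevity. First I would freeze the nonlocal factors, setting $A:=\bmu\big/\!\int_0^1 e^{v(s)}\,\mathrm{d}s>0$ and $B:=\blambda\big/\!\int_0^1 e^{-v(s)}\,\mathrm{d}s>0$, so that \eqref{eq:1.20} becomes the autonomous ODE $v''=G'(v)$ with $G(t):=A(e^{t}-1)+B(e^{-t}-1)$; note that $G$ is strictly convex, $G(0)=0$, and $G'(0)=A-B$. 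Multiplying by $v'$ and integrating from $0$ using the data \eqref{eq:1.21} gives the first integral $\tfrac12 v'(r)^2=G(v(r))$, so $G(v(r))\ge0$ for all $r\in[0,1]$.

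The key step is to prove $A<B$. Integrating \eqref{eq:1.20} over $[0,1]$ and using the normalizations of the two nonlocal densities, I get $v'(1)=A\!\int_0^1 e^{v}\,\mathrm{d}s-B\!\int_0^1 e^{-v}\,\mathrm{d}s=\bmu-\blambda<0$. Now suppose, for contradiction, that $A\ge B$. If $A=B$, then $(0,0)$ is an equilibrium of $v''=G'(v)$ and IVP uniqueness forces $v\equiv0$, contradicting $v'(1)=\bmu-\blambda\neq0$. If $A>B$, then $v''(0)=A-B>0$, so $v$ leaves the origin increasing into $\{t>0\}$, where $G>0$; since $v'(r)^2=2G(v(r))$ and $G$ has no zero on $(0,\infty)$, $v'$ cannot vanish again, hence $v'>0$ on $(0,1]$ and in particular $v'(1)>0$ — again a contradiction. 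Therefore $A<B$, so $v''(0)=A-B<0$, and the mirror-image picture on $\{t<0\}$ (where $G>0$ and $G'<0$) shows that $v$ leaves the origin decreasing and that $v'(r)<0$ and $v''(r)=G'(v(r))<0$ hold throughout $(0,1]$.

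This gives $v''<0$ on $(0,1]$, i.e.\ strict downward concavity on $[0,1]$; integrating once from $0$ with $v'(0)=0$ yields $v'<0$ on $(0,1]$, so $v$ is strictly decreasing, and a further integration yields $v<0$ on $(0,1]$ (which I would record here for use in Section~\ref{sec4}). This reproduces exactly the facts attributed to \cite[Lemma~2.1]{Lee2019} and \cite[Proposition~2.1]{Lee2011}. The only place where something has to be earned is the exclusion of the branch $A\ge B$, and that is precisely where the standing hypothesis $\blambda>\bmu$ enters — through $v'(1)=\bmu-\blambda<0$ (alternatively one could invoke the inverse-H\"older bound $\int_0^1 e^{v}\,\mathrm{d}s\,\int_0^1 e^{-v}\,\mathrm{d}s\le\blambda/\bmu$ of \cite{ccleerolf2018}); the remaining steps are routine ODE manipulations.
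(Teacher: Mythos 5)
Your argument is correct, and it is more self-contained than what the paper actually does: the paper's proof of Lemma~\ref{lma:4.1} consists of quoting the sign fact $v_{\bmu,\blambda}''=\frac{\bmu e^{v_{\bmu,\blambda}}}{\int_0^1e^{v_{\bmu,\blambda}(s)}\mathrm{d}s}-\frac{\blambda e^{-v_{\bmu,\blambda}}}{\int_0^1e^{-v_{\bmu,\blambda}(s)}\mathrm{d}s}<0$ on $(0,1]$ from \cite[Lemma 2.1]{Lee2019} and \cite[Proposition 2.1]{Lee2011} and then integrating twice with the data \eqref{eq:1.21}, exactly as in your last step. What you do differently is to supply the missing sign argument yourself: freezing the nonlocal coefficients into constants $A,B$, using the first integral $\tfrac12 v'^2=G(v)$ together with the exact identity $v'(1)=\bmu-\blambda<0$ (obtained by integrating \eqref{eq:1.20} over $[0,1]$), and running the dichotomy on $A-B$ to exclude $A\ge B$; the convexity of $G$ with $G(0)=0$ then makes the continuation argument on either side of $t=0$ airtight, so $A<B$ and hence $v''=G'(v)<0$, $v'<0$, $v<0$ on $(0,1]$. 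This buys a proof that is independent of the cited references (and, as you note, isolates precisely where $\blambda>\bmu$ is used), at the cost of a slightly longer argument than the paper's citation-plus-integration; the two routes are otherwise compatible, and your byproduct $v<0$ on $(0,1]$ is indeed the fact used later in Section~\ref{sec4}.
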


Lemma~\ref{lma:4.1} and \eqref{eq:1.08} present that $v_{\bmu,\blambda}$ and $u_{\blambda}$ are strictly decreasing on $[0,1]$, but do not provide further information for $w_{\bmu,\blambda}:=v_{\bmu,\blambda}-u_{\blambda}$. To prove \eqref{eq:4.04}, we need two crucial lemmas. Firstly, we obtain that $v_{\bmu,\blambda}>u_{\blambda}$ on $\left(0,1\right]$ and $w_{\bmu,\blambda}$ is {\bf monotonically increasing} as $\blambda>\bmu>0$, which is stated as follows.
\begin{lma}
\label{lma:4.2}
For $0<{\bmu}<\blambda$, $w_{{\bmu},\blambda}$ defined in \eqref{eq:4.01} is positive and monotonically increasing on $\left(0,1\right]$.
In particular, $w_{\bmu,\blambda}$ attains its maximum value at the boundary point $r=1$.
\end{lma}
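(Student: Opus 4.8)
The plan is to prove the sharper statement that $w_{\bmu,\blambda}'(r)>0$ for every $r\in(0,1)$; since $w_{\bmu,\blambda}(0)=0$, strict monotonic increase on $[0,1]$, positivity on $(0,1]$, and $\max_{[0,1]}w_{\bmu,\blambda}=w_{\bmu,\blambda}(1)$ then follow at once. The data I would use are: $w_{\bmu,\blambda}(0)=w_{\bmu,\blambda}'(0)=0$, from \eqref{eq:1.06} and \eqref{eq:1.21}; $w_{\bmu,\blambda}'(1)=v_{\bmu,\blambda}'(1)-u_{\blambda}'(1)=\bmu>0$, where $v_{\bmu,\blambda}'(1)=\bmu-\blambda$ comes from integrating \eqref{eq:1.20} over $(0,1)$ and $u_{\blambda}'(1)=-\blambda$ is \eqref{eq:1.07}; the identity $w_{\bmu,\blambda}''=e^{-v_{\bmu,\blambda}}W_{\bmu,\blambda}$ with $W_{\bmu,\blambda}$ as introduced above; Lemma~\ref{lma:4.1}, which gives $v_{\bmu,\blambda}'<0$ on $(0,1]$ (this is the only place $\blambda>\bmu$ is used); and the fact that $w_{\bmu,\blambda}\in\mathcal{C}^3([0,1])$ by a standard bootstrap on \eqref{eq:1.05} and \eqref{eq:1.20}, so that $w_{\bmu,\blambda}'''$ is available.

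The two computations driving the argument are: first, $W_{\bmu,\blambda}=e^{v_{\bmu,\blambda}}w_{\bmu,\blambda}''$, so $w_{\bmu,\blambda}''(r_0)=0$ forces $W_{\bmu,\blambda}(r_0)=0$; second, since the middle term of $W_{\bmu,\blambda}$ does not depend on $r$,
\[
W_{\bmu,\blambda}'(r)=\frac{2\bmu\,v_{\bmu,\blambda}'(r)e^{2v_{\bmu,\blambda}(r)}}{\int_0^1 e^{v_{\bmu,\blambda}(s)}\,\mathrm{d}s}+\frac{\blambda\,w_{\bmu,\blambda}'(r)e^{w_{\bmu,\blambda}(r)}}{\int_0^1 e^{-u_{\blambda}(s)}\,\mathrm{d}s},
\]
whose first summand is strictly negative on $(0,1]$ by Lemma~\ref{lma:4.1}. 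Differentiating $w_{\bmu,\blambda}''=e^{-v_{\bmu,\blambda}}W_{\bmu,\blambda}$ gives $w_{\bmu,\blambda}'''(r_0)=e^{-v_{\bmu,\blambda}(r_0)}W_{\bmu,\blambda}'(r_0)$ at any $r_0$ with $w_{\bmu,\blambda}''(r_0)=0$.

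Now I would run a two-step contradiction on the function $g:=w_{\bmu,\blambda}'$. Step one: suppose $\min_{[0,1]}g<0$. Since $g(0)=0$ and $g(1)=\bmu>0$, this minimum is attained at some interior $r_0\in(0,1)$, where the necessary conditions for a local minimum of $g\in\mathcal{C}^2$ give $g'(r_0)=w_{\bmu,\blambda}''(r_0)=0$ and $g''(r_0)=w_{\bmu,\blambda}'''(r_0)\ge 0$. But $w_{\bmu,\blambda}''(r_0)=0$ yields $w_{\bmu,\blambda}'''(r_0)=e^{-v_{\bmu,\blambda}(r_0)}W_{\bmu,\blambda}'(r_0)$, and in the displayed formula for $W_{\bmu,\blambda}'(r_0)$ both summands are strictly negative (the first since $v_{\bmu,\blambda}'(r_0)<0$, the second since $g(r_0)=\min g<0$), so $w_{\bmu,\blambda}'''(r_0)<0$, a contradiction. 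Hence $w_{\bmu,\blambda}'\ge0$ on $[0,1]$. Step two: suppose $w_{\bmu,\blambda}'(r_0)=0$ for some $r_0\in(0,1)$. Then $r_0$ is a global, hence local, minimum of $g$, so again $w_{\bmu,\blambda}''(r_0)=0$ and $w_{\bmu,\blambda}'''(r_0)\ge0$; but now $w_{\bmu,\blambda}'''(r_0)=e^{-v_{\bmu,\blambda}(r_0)}\cdot\frac{2\bmu v_{\bmu,\blambda}'(r_0)e^{2v_{\bmu,\blambda}(r_0)}}{\int_0^1 e^{v_{\bmu,\blambda}(s)}\,\mathrm{d}s}<0$ (the $w_{\bmu,\blambda}'$-term of $W_{\bmu,\blambda}'$ has dropped out), a contradiction. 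Therefore $w_{\bmu,\blambda}'>0$ on $(0,1)$; together with $w_{\bmu,\blambda}(0)=0$ this gives $w_{\bmu,\blambda}$ strictly increasing on $[0,1]$, $w_{\bmu,\blambda}>0$ on $(0,1]$, and $\max_{[0,1]}w_{\bmu,\blambda}=w_{\bmu,\blambda}(1)$, which is the assertion of the lemma (in particular $w_{\bmu,\blambda}$ has no interior local extremum).

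The main obstacle is locating the right quantity to differentiate: the naive attempt to sign $w_{\bmu,\blambda}''$ at a critical point of $w_{\bmu,\blambda}$ is circular because it requires comparing $\int_0^1 e^{-u_{\blambda}}$ with $\int_0^1 e^{-v_{\bmu,\blambda}}$, which is essentially the conclusion. The resolution above is to pass to the \emph{second-order} necessary condition for $w_{\bmu,\blambda}'$ at its minimum — equivalently, a sign for $w_{\bmu,\blambda}'''$ at a degenerate critical point — and to exploit that $W_{\bmu,\blambda}$ vanishes there, so that $w_{\bmu,\blambda}'''$ is controlled by $W_{\bmu,\blambda}'$, whose negativity is immediate and is precisely where $v_{\bmu,\blambda}'<0$ (hence $\blambda>\bmu$) enters. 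Everything else — the boundary values of $w_{\bmu,\blambda}$ and $w_{\bmu,\blambda}'$, the formula for $W_{\bmu,\blambda}'$, and the regularity bootstrap — is routine.
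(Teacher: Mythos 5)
Your proof is correct, and it takes a genuinely different route from the paper's. The paper never differentiates the factor $W_{\bmu,\blambda}$; instead it works with $w_{\bmu,\blambda}$ itself and exploits the monotone dependence of $W_{\bmu,\blambda}$ on the pair $(v_{\bmu,\blambda},w_{\bmu,\blambda})$: assuming an interior minimum $r_1$ of $w_{\bmu,\blambda}$, it compares $w_{\bmu,\blambda}''$ at two points ordered by $v_{\bmu,\blambda}$ and by $w_{\bmu,\blambda}$ (first $0$ versus $r_1$ to force $w_{\bmu,\blambda}''(0)>0$, then an auxiliary interior maximum $r_2\in(0,r_1)$ versus $r_1$) to reach a sign contradiction, and repeats the same two-point comparison to exclude interior local maxima, concluding via $w_{\bmu,\blambda}'(1)=\bmu>0$ that $w_{\bmu,\blambda}'$ keeps one sign. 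You instead apply the first- and second-order necessary conditions to $g:=w_{\bmu,\blambda}'$ at its minimum, using that $w_{\bmu,\blambda}''(r_0)=0$ forces $W_{\bmu,\blambda}(r_0)=0$ so that $w_{\bmu,\blambda}'''(r_0)=e^{-v_{\bmu,\blambda}(r_0)}W_{\bmu,\blambda}'(r_0)$, and that both summands of $W_{\bmu,\blambda}'$ (the nonlocal middle term being constant in $r$) carry a strict negative sign thanks to $v_{\bmu,\blambda}'<0$ from Lemma~\ref{lma:4.1} and, in your Step one, $w_{\bmu,\blambda}'(r_0)<0$; the same computation with the $w_{\bmu,\blambda}'$-term dropped rules out interior zeros of $w_{\bmu,\blambda}'$. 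The ingredients ($w_{\bmu,\blambda}'(0)=0$, $w_{\bmu,\blambda}'(1)=\bmu>0$, Lemma~\ref{lma:4.1}, identity \eqref{eq:4.03}) are the same, and the hypothesis $\blambda>\bmu$ enters both arguments only through Lemma~\ref{lma:4.1}. What your version buys is a slightly stronger conclusion ($w_{\bmu,\blambda}'>0$ on $(0,1)$ directly) and a leaner case analysis with no auxiliary points $r_2,r_3,r_4$, at the modest price of invoking $\mathcal{C}^3$ regularity of $w_{\bmu,\blambda}$ (which is indeed available by bootstrapping \eqref{eq:1.05} and \eqref{eq:1.20}, and which you correctly flag); the paper's comparison argument stays entirely at the $\mathcal{C}^2$ level.
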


\begin{proof}
Due to the continuity of $w_{\bmu,\blambda}$, there exists $r_1\in[0,1]$ such that $w_{\bmu,\blambda}$ attains its minimum value at $r=r_1$. Since $w_{\bmu,\blambda}'(1)={\bmu}>0$, we have $r_1\in\left[0,1\right)$.

We first show $r_1=0$. Suppose by contradiction that $r_1\in\left(0,1\right)$, which implies that $w_{\bmu,\blambda}(r_1)\leq0$, $w_{\bmu,\blambda}'(r_1)=0$ and $w_{\bmu,\blambda}''(r_1)\geq0$.
Since $v_{\bmu,\blambda}(r_1)<v_{\bmu,\blambda}(0)=0$ (by Lemma~\ref{lma:4.1}) and $w_{\bmu,\blambda}(r_1)\leq0$, it is easy to obtain
\begin{equation}
\label{eq:4.06}
\frac{\bmu}{\int_0^1e^{v_{\bmu,\blambda}(s)}\mathrm{d}s}+\frac{\blambda}{\int_0^1e^{-u_{\blambda}(s)}\mathrm{d}s}>\frac{\bmu e^{2v_{\bmu,\blambda}(r_1)}}{\int_0^1e^{v_{\bmu,\blambda}(s)}\mathrm{d}s}+\frac{\blambda e^{w_{\bmu,\blambda}(r_1)}}{\int_0^1e^{-u_{\blambda}(s)}\mathrm{d}s}.
\end{equation}
Along with \eqref{eq:4.03}, we find
\begin{equation}
\label{eq:4.07}
\begin{aligned}
w_{\bmu,\blambda}''(0)&=\frac{\bmu}{\int_0^1e^{v_{{\bmu},\blambda}(s)}\mathrm{d}s}-\frac{\blambda}{\int_0^1e^{u_{\blambda}(s)}\mathrm{d}s}+\frac{\blambda}{\int_0^1e^{-u_{\blambda}(s)}\mathrm{d}s}
\\&>\frac{\bmu e^{2v_{\bmu,\blambda}(r_1)}}{\int_0^1e^{v_{\bmu,\blambda}(s)}\mathrm{d}s}-\frac{\blambda}{\int_0^1e^{u_{\blambda}(s)}\mathrm{d}s}+\frac{\blambda e^{w_{\bmu,\blambda}(r_1)}}{\int_0^1e^{-u_{\blambda}(s)}\mathrm{d}s}=e^{v_{\bmu,\blambda}(r_1)}w_{\bmu,\blambda}''(r_1)>0.
\end{aligned}
\end{equation} 
Recall that $w_{\bmu,\blambda}'(0)=0$.
Hence, by \eqref{eq:4.07}, there exists $r_2\in(0,r_1)$ such that $w_{\bmu,\blambda}(r_2)=\max\limits_{\left[0,r_1\right]}w_{\bmu,\blambda}>0$.
In particular, 
\begin{equation}
\label{eq:4.08}
w_{\bmu,\blambda}''(r_2)\leq0.\end{equation}
Since $v_{\bmu,\blambda}(r_2)>v_{\bmu,\blambda}(r_1)$  and $w_{{\bmu},\blambda}(r_2)>0\geq w_{\bmu,\blambda}(r_1)$, we can repeat the same argument as in \eqref{eq:4.06} to obtain
\[
\frac{\bmu e^{2v_{\bmu,\blambda}(r_2)}}{\int_0^1e^{v_{{\bmu},\blambda}(s)}\mathrm{d}s}+\frac{\blambda e^{w_{\bmu,\blambda(r_2)}}}{\int_0^1e^{-u_{\blambda}(s)}\mathrm{d}s}>\frac{\bmu e^{2v_{\bmu,\blambda}(r_1)}}{\int_0^1e^{v_{{\bmu},\blambda}(s)}\mathrm{d}s}+\frac{\blambda e^{w_{\bmu,\blambda(r_1)}}}{\int_0^1e^{-u_{\blambda}(s)}\mathrm{d}s}.\]
Along with \eqref{eq:4.03}, we can follow the similar argument as in \eqref{eq:4.07} to get $w_{\bmu,\blambda}''(r_2)>0$, which contradicts \eqref{eq:4.08}. Thus, $r_1=0$ and $w_{\bmu,\blambda}\geq0$ on $\left[0,1\right]$.

Next, we want to show that $w_{\bmu,\blambda}$ attains its absolute maximum value at $r=1$.
Suppose by contradiction that there exists $r_3\in\left(0,1\right)$ such that $w_{\bmu,\blambda}$ attains its local maximum at $r=r_3$.
In particular, $w_{\bmu,\blambda}(r_3)>0$ and $w_{\bmu,\blambda}''(r_3)\leq0$, and there exists $\delta^*>0$ such that $w_{\bmu,\blambda}(r)\leq  w_{\bmu,\blambda}(r_3)$ and $w_{\bmu,\blambda}'(r)\leq0$ for $r\in\left(r_3,r_3+\delta^*\right)$.
On the other hand, since $w_{\bmu,\blambda}'(1)=\bmu>0$, there exists $r_4\in\left(r_3,1\right)$ such that $w_{\bmu,\blambda}$ attains its local minimum at $r=r_4$ with $w_{\bmu,\blambda}''(r_4)\geq0$.
Note that $w_{\bmu,\blambda}(r_4)\leq w_{\bmu,\blambda}(r_3)$ and $v_{\bmu,\blambda}(r_4)<v_{\bmu,\blambda}(r_3)$.
Thus, we can apply the similar argument as in \eqref{eq:4.06} and \eqref{eq:4.07} to get $w_{\bmu,\blambda}''(r_3)>0$, which contradicts the fact $w_{\bmu,\blambda}''(r_3)\leq0$.
As a consequence, $w_{\bmu,\blambda}$ attains its maximum value at $r=1$.
Furthermore, throughout the above argument, we also prove that $w_{\bmu,\blambda}$ has neither local maximum nor local minimum, and $w_{\bmu,\blambda}'$ preserves the same sign.
Consequently, $w_{\bmu,\blambda}$ is monotonically increasing since $w_{\bmu,\blambda}(0)<w_{\bmu,\blambda}(1)$.
Therefore, we complete the proof of Lemma~\ref{lma:4.2}.
\end{proof}

\subsection{Proof of \texorpdfstring{\eqref{eq:4.04}}{(4.4)}}
\label{subsec:4.2}

Thanks to Lemma~\ref{lma:4.2}, it suffices to show that $w_{\bmu,\blambda}(1)$ tends to zero as $\blambda\to\infty$ and $\bmu\blambda\to0$.
\begin{lma}
\label{lma:4.3}
If $\displaystyle\lim_{\blambda\to\infty}{\bmu}\blambda=0$, then
\begin{equation}
\label{eq:4.09}
\lim_{\blambda\to\infty}w_{\bmu,\blambda}(1)=0.\end{equation}
\end{lma}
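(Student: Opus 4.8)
The plan is to bound $w_{\bmu,\blambda}(1)=v_{\bmu,\blambda}(1)-u_{\blambda}(1)$ by comparing the two solutions at the single point $r=1$, using the first integrals of \eqref{eq:1.05} and \eqref{eq:1.20} together with the monotonicity already established in Lemmas~\ref{lma:4.1}--\ref{lma:4.2}. Since $e^{w_{\bmu,\blambda}(1)}=e^{-u_{\blambda}(1)}/e^{-v_{\bmu,\blambda}(1)}$, it suffices to combine a sharp upper bound for $e^{-u_{\blambda}(1)}$ with a matching lower bound for $e^{-v_{\bmu,\blambda}(1)}$. For the numerator we already have the exact identity $e^{-u_{\blambda}(1)}=1+\tfrac{\blambda}{2}I_{\blambda}$ from \eqref{eq:1.09}, where $I_{\blambda}=\int_0^1 e^{-u_{\blambda}(s)}\mathrm{d}s$; moreover \eqref{eq:2.05} gives $\tfrac{\blambda I_{\blambda}}{2}=e^{-u_{\blambda}(1)}-1=\tfrac{\blambda^2}{\pi^2}(1+o_{\blambda})$.

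For the denominator I would first derive a first integral for $v_{\bmu,\blambda}$: multiplying \eqref{eq:1.20} by $v_{\bmu,\blambda}'$, integrating over $[0,r]$, and using $v_{\bmu,\blambda}(0)=v_{\bmu,\blambda}'(0)=0$ (and that the nonlocal coefficients are constant in $r$) yields
\[
v_{\bmu,\blambda}'^{\,2}(r)=\frac{2\bmu}{\int_0^1 e^{v_{\bmu,\blambda}(s)}\mathrm{d}s}\bigl(e^{v_{\bmu,\blambda}(r)}-1\bigr)+\frac{2\blambda}{\int_0^1 e^{-v_{\bmu,\blambda}(s)}\mathrm{d}s}\bigl(e^{-v_{\bmu,\blambda}(r)}-1\bigr).
\]
Integrating \eqref{eq:1.20} over $[0,1]$ gives $v_{\bmu,\blambda}'(1)=\bmu-\blambda$, and $v_{\bmu,\blambda}(1)<0$ by Lemma~\ref{lma:4.1}, so the first term above is $\le 0$ at $r=1$; discarding it leaves $e^{-v_{\bmu,\blambda}(1)}\ge 1+\tfrac{(\blambda-\bmu)^2}{2\blambda}\int_0^1 e^{-v_{\bmu,\blambda}(s)}\mathrm{d}s$.

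Next, by Lemma~\ref{lma:4.2} the function $w_{\bmu,\blambda}$ is nonnegative and attains its maximum at $r=1$, so $e^{-v_{\bmu,\blambda}(s)}=e^{-u_{\blambda}(s)}e^{-w_{\bmu,\blambda}(s)}\ge e^{-w_{\bmu,\blambda}(1)}e^{-u_{\blambda}(s)}$, whence $\int_0^1 e^{-v_{\bmu,\blambda}(s)}\mathrm{d}s\ge e^{-w_{\bmu,\blambda}(1)}I_{\blambda}$. Writing $t:=e^{w_{\bmu,\blambda}(1)}$, $X:=\tfrac{\blambda I_{\blambda}}{2}=e^{-u_{\blambda}(1)}-1$ and $c:=(1-\tfrac{\bmu}{\blambda})^2$, the upper bound $e^{-u_{\blambda}(1)}=1+X$ and the lower bound $e^{-v_{\bmu,\blambda}(1)}\ge 1+cX/t$ combine into the self-referential inequality
\[
t\le\frac{1+X}{1+cX/t},\qquad\text{hence}\qquad t+cX\le 1+X,\qquad\text{i.e.}\qquad t\le 1+(1-c)X.
\]
Since $(1-c)X=\bigl(1-(1-\tfrac{\bmu}{\blambda})^2\bigr)\tfrac{\blambda I_{\blambda}}{2}$, this is exactly the estimate \eqref{eq:1.29}. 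Finally, using $1-(1-\tfrac{\bmu}{\blambda})^2\le\tfrac{2\bmu}{\blambda}$ and $\tfrac{\blambda I_{\blambda}}{2}=\tfrac{\blambda^2}{\pi^2}(1+o_{\blambda})$, the right-hand side is at most $1+\tfrac{2\bmu\blambda}{\pi^2}(1+o_{\blambda})$, which tends to $1$ under the hypothesis $\bmu\blambda\to0$; combined with $w_{\bmu,\blambda}(1)\ge 0$ from Lemma~\ref{lma:4.2} this gives \eqref{eq:4.09}. The main obstacle is producing, for the denominator, a lower bound sharp enough to absorb the factor $\blambda$ (equivalently, the large quantity $X\sim\blambda^2/\pi^2$): the naive bound $e^{-v_{\bmu,\blambda}(1)}\ge 1$ is useless, and it is precisely the monotonicity of $w_{\bmu,\blambda}$ (Lemma~\ref{lma:4.2}) that lets one replace $\int_0^1 e^{-v_{\bmu,\blambda}}$ by $e^{-w_{\bmu,\blambda}(1)}I_{\blambda}$ and thereby close the loop.
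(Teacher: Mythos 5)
Your proposal is correct and follows essentially the same route as the paper: evaluate the first integrals of \eqref{eq:1.05} and \eqref{eq:1.20} at $r=1$, discard the nonpositive $\bmu$-term using $v_{\bmu,\blambda}(1)<0$, invoke Lemma~\ref{lma:4.2} to get $\int_0^1e^{-v_{\bmu,\blambda}}\geq e^{-w_{\bmu,\blambda}(1)}\int_0^1e^{-u_{\blambda}}$ (the paper's \eqref{eq:4.13}), and arrive at exactly the estimate \eqref{eq:4.15}, concluding via \eqref{eq:2.05} and $\bmu\blambda\to0$. The only difference is presentational (you solve a self-referential inequality for $e^{w_{\bmu,\blambda}(1)}$ rather than dividing $v_{\bmu,\blambda}'^2(1)$ by $u_{\blambda}'^2(1)$ as in \eqref{eq:4.12}), so no substantive comparison is needed.
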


\begin{proof}
Multiplying \eqref{eq:1.05} by $u_{\blambda}'$ and integrating the expression over $[0,r]$ gives
\begin{equation}
\label{eq:4.10}
\frac12u_{\blambda}'^2(r)=\frac{\blambda(e^{-u_{\blambda}(r)}-1)}{\int_0^1e^{-u_{\blambda}(s)}\mathrm{d}s}~~\mbox{for}~r\in\left[0,1\right].
\end{equation}
Applying the same argument to \eqref{eq:1.20}, we have
\begin{equation}
\label{eq:4.11}
\frac12v_{\bmu,\blambda}'^2(r)=\frac{{\bmu}(e^{v_{{\bmu},\blambda}(r)}-1)}{\int_0^1e^{v_{{\bmu},\blambda}(s)}\mathrm{d}s}+\frac{\blambda(e^{-v_{{\bmu},\blambda}(r)}-1)}{\int_0^1e^{-v_{{\bmu},\blambda}(s)}\mathrm{d}s}~~\mbox{for}~r\in\left[0,1\right].
\end{equation}
Putting $r=1$ into \eqref{eq:4.10} and \eqref{eq:4.11}, one arrives at
\begin{equation}
\label{eq:4.12}
\begin{aligned}
\left(1-\frac{{\bmu}}{\blambda}\right)^2&=\frac{v_{\bmu,\blambda}'^2(1)}{u_{\blambda}'^2(1)}\\&=\frac{\bmu}{\blambda}\frac{\int_0^1e^{-u_{\blambda}(s)}\mathrm{d}s}{\int_0^1e^{v_{\bmu,\blambda}(s)}\mathrm{d}s}\frac{e^{v_{\bmu,\blambda}(1)}-1}{e^{-u_{\blambda}(1)}-1}+\frac{\int_0^1e^{-u_{\blambda}(s)}\mathrm{d}s}{\int_0^1e^{-v_{\bmu,\blambda}(s)}\mathrm{d}s}\frac{e^{-v_{\bmu,\blambda}(1)}-1}{e^{-u_{\blambda}(1)}-1}
\\&<\frac{\int_0^1e^{-u_{\blambda}(s)}\mathrm{d}s}{\int_0^1e^{-v_{{\bmu},\blambda}(s)}\mathrm{d}s}\frac{e^{-w_{\bmu,\blambda}(1)}-e^{u_{\blambda}(1)}}{1-e^{u_{\blambda}(1)}}.
\end{aligned}
\end{equation}
Here we have used \eqref{eq:4.01} and the fact $v_{\bmu,\blambda}(1)<0$.
Since $-v_{\bmu,\blambda}\geq-w_{\bmu,\blambda}(1)-u_{\blambda}$ on $\left[0,1\right]$ (cf. Lemma~\ref{lma:4.2}), we have
\begin{equation}
\label{eq:4.13}
e^{-w_{\bmu,\blambda}(1)}\leq\frac{\int_0^1e^{-v_{\bmu,\blambda}(s)}\mathrm{d}s}{\int_0^1e^{-u_{\blambda}(s)}\mathrm{d}s}.
\end{equation}
Along with \eqref{eq:4.12}, we can obtain
\[\left(1-\frac{{\bmu}}{\blambda}\right)^2<\frac{1-e^{w_{{\bmu},\blambda}(1)+u_{\blambda}(1)}}{1-e^{u_{\blambda}(1)}},
\]
which implies
\begin{equation}
\label{eq:4.15}
1\leq e^{w_{\bmu,\blambda}(1)}<\left(1-\frac{\bmu}{\blambda}\right)^2+\left(2-\frac{\bmu}{\blambda}\right)\frac{\bmu}{\blambda}e^{-u_{\blambda}(1)}.\end{equation}
Note finally that \eqref{eq:2.05} implies the uniform boundness of $\frac{e^{-u_{\blambda}(1)}}{\blambda^2}$ with respect to $\blambda\gg1$.
Along with $\displaystyle\lim_{\blambda\to\infty}{\bmu}\blambda=0$, we get
\begin{equation}
\label{eq:4.16}
\lim_{\blambda\to\infty}\frac{{\bmu}}{\blambda}e^{-u_{\blambda}(1)}=0.
\end{equation}
Combining \eqref{eq:4.15} with \eqref{eq:4.16}, we deduce \eqref{eq:4.09} and complete the proof of Lemma~\ref{lma:4.3}.\end{proof}

Applying
Finally, by Lemmas~\ref{lma:4.2} and \ref{lma:4.3}, it is easy to see $\displaystyle\lim_{\mathop{\blambda\to\infty}\limits_{\mbox{\scriptsize $\bmu\blambda\to0$}}}\max_{r\in\left[0,1\right]}\left|w_{\bmu,\blambda}(r)\right|=\lim_{\mathop{\blambda\to\infty}\limits_{\mbox{\scriptsize $\bmu\blambda\to0$}}}w_{\bmu,\blambda}(1)=0$, which gives \eqref{eq:4.04}.

\subsection{Proof of \texorpdfstring{\eqref{eq:4.05}}{(4.5)}}
\label{subsec:4.3}

Note that $w_{\bmu,\blambda}'\geq0$, $w_{\bmu,\blambda}'(0)=0$ and $\displaystyle\lim_{\blambda\to\infty}w_{\bmu,\blambda}'(1)=\lim_{\blambda\to\infty}\bmu=0$.
For $\blambda\gg1\gg\bmu>0$, we may assume that $w_{\bmu,\blambda}'$ attains its maximum value at interior point $r_{\bmu,\blambda}^*$.
It suffices to claim
\begin{equation}
\label{eq:4.17}
\lim_{\mathop{\blambda\to\infty}\limits_{\mbox{\scriptsize $\bmu\blambda\to0$}}}w_{\bmu,\blambda}'(r_{\bmu,\blambda}^*)=0.
\end{equation}

{\bf Claim of \eqref{eq:4.17}.} Firstly, by $w_{\bmu,\blambda}''(r_{\bmu,\blambda}^*)=0$ and \eqref{eq:4.02}, we have
\begin{equation}
\label{eq:4.18}
0=w''_{\bmu,\blambda}(r_{\bmu,\blambda}^*)=\frac{\bmu e^{v_{\bmu,\blambda}(r_{\bmu,\blambda}^*)}}{\int_0^1e^{v_{\bmu,\blambda}(s)}\mathrm{d}s}-\frac{\blambda e^{-v_{\bmu,\blambda}(r_{\bmu,\blambda}^*)}}{\int_0^1e^{-v_{\bmu,\blambda}(s)}\mathrm{d}s}+\frac{\blambda e^{-u_{\blambda}(r_{\bmu,\blambda}^*)}}{\int_0^1e^{-u_{\blambda}(s)}\mathrm{d}s}.
\end{equation}
On the other hand, subtracting \eqref{eq:4.10} from \eqref{eq:4.11} gives
\begin{equation}
\label{eq:4.19}
\frac12\left(v_{\bmu,\blambda}'^2(r)-u_{\blambda}'^2(r)\right)=\frac{\bmu(e^{v_{\bmu,\blambda}(r)}-1)}{\int_0^1e^{v_{\bmu,\blambda}(s)}\mathrm{d}s}+\frac{\blambda(e^{-v_{\bmu,\blambda}(r)}-1)}{\int_0^1e^{-v_{\bmu,\blambda}(s)}\mathrm{d}s}-\frac{\blambda(e^{-u_{\blambda}(r)}-1)}{\int_0^1e^{-u_{\blambda}(s)}\mathrm{d}s}.
\end{equation}
Putting $r=r_{\bmu,\blambda}^*$ into \eqref{eq:4.19} and using \eqref{eq:4.18}, we observe that
\[
\begin{aligned}\frac12w_{\bmu,\blambda}'(r_{\bmu,\blambda}^*)&(v_{\bmu,\blambda}'(r_{\bmu,\blambda}^*)+u_{\blambda}'(r_{\bmu,\blambda}^*))
=\frac12\left(v_{\bmu,\blambda}'^2(r_{\bmu,\blambda}^*)-u_{\blambda}'^2(r_{\bmu,\blambda}^*)\right)
\\&=\frac{\bmu\left(e^{v_{\bmu,\blambda}(r_{\bmu,\blambda}^*)}-1\right)}{\int_0^1e^{v_{\bmu,\blambda}(s)}\mathrm{d}s}+\blambda\left(\frac{e^{-v_{\bmu,\blambda}(r_{\bmu,\blambda}^*)}}{\int_0^1e^{-v_{\bmu,\blambda}(s)}\mathrm{d}s}-\frac{e^{-u_{\blambda}(r_{\bmu,\blambda}^*)}}{\int_0^1e^{-u_{\blambda}(s)}\mathrm{d}s}\right)\\&\qquad-\blambda\left(\frac1{\int_0^1e^{-v_{\bmu,\blambda}(s)}\mathrm{d}s}-\frac1{\int_0^1e^{-u_{\blambda}(s)}\mathrm{d}s}\right)
\\&=\frac{\bmu(2e^{v_{\bmu,\blambda}(r_{\bmu,\blambda}^*)}-1)}{\int_0^1e^{v_{\bmu,\blambda}(s)}\mathrm{d}s}-\blambda\left(\frac1{\int_0^1e^{-v_{\bmu,\blambda}(s)}\mathrm{d}s}-\frac1{\int_0^1e^{-u_{\blambda}(s)}\mathrm{d}s}\right).
\end{aligned}
\]
Since $0\leq w_{\bmu,\blambda}'=v_{\bmu,\blambda}'-u_{\blambda}'\leq-\left(v_{\bmu,\blambda}'+u_{\blambda}'\right)$, we have
\begin{equation}
\label{eq:4.21}
\begin{aligned}
w_{\bmu,\blambda}'^2(r_{\bmu,\blambda}^*)&\leq-w_{\bmu,\blambda}'(r_{\bmu,\blambda}^*)(v_{\bmu,\blambda}'(r_{\bmu,\blambda}^*)+u_{\blambda}'(r_{\bmu,\blambda}^*))
\\&=-\frac{2\bmu(2e^{v_{\bmu,\blambda}(r_{\bmu,\blambda}^*)}-1)}{\int_0^1e^{v_{\bmu,\blambda}(s)}\mathrm{d}s}+2\blambda\left(\frac1{\int_0^1e^{-v_{\bmu,\blambda}(s)}\mathrm{d}s}-\frac1{\int_0^1e^{-u_{\blambda}(s)}\mathrm{d}s}\right).
\end{aligned}
\end{equation}
To deal with \eqref{eq:4.21}, we need the following lemma.

\begin{lma}
\label{lma:4.4} There hold

\begin{enumerate}
\item[(i)] $\displaystyle\lim_{\mathop{\blambda\to\infty}\limits_{\mbox{\scriptsize $\bmu\blambda\to0$}}}\blambda\left(\frac1{\int_0^1e^{-v_{\bmu,\blambda}(s)}\mathrm{d}s}-\frac1{\int_0^1e^{-u_{\blambda}(s)}\mathrm{d}s}\right)=0$.

\item[(ii)] $\displaystyle\lim_{\mathop{\blambda\to\infty}\limits_{\mbox{\scriptsize $\bmu\blambda\to0$}}}\frac{\bmu\left(2e^{v_{\bmu,\blambda}(r_{\bmu,\blambda}^*)}-1\right)}{\int_0^1e^{v_{\bmu,\blambda}(s)}\mathrm{d}s}=0$.
\end{enumerate}
\end{lma}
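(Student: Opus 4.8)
The plan is to prove (i) and (ii) separately, both relying on Lemma~\ref{lma:4.2} (so that $w_{\bmu,\blambda}:=v_{\bmu,\blambda}-u_{\blambda}$ satisfies $0\le w_{\bmu,\blambda}(r)\le w_{\bmu,\blambda}(1)=\max_{[0,1]}w_{\bmu,\blambda}$ and $v_{\bmu,\blambda}\ge u_{\blambda}$ on $[0,1]$ for $\blambda>\bmu>0$), on Lemma~\ref{lma:4.3} (so that $w_{\bmu,\blambda}(1)\to0$ in the regime $\blambda\to\infty$, $\bmu\blambda\to0$), and on the sharp single-species asymptotics \eqref{eq:2.01}, \eqref{eq:2.05} and \eqref{eq:3.03}.

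\emph{Part (i).} This is an elementary reciprocal estimate. Using $v_{\bmu,\blambda}\ge u_{\blambda}$,
\begin{equation*}
0\le\blambda\left(\frac1{\int_0^1e^{-v_{\bmu,\blambda}(s)}\mathrm{d}s}-\frac1{\int_0^1e^{-u_{\blambda}(s)}\mathrm{d}s}\right)=\blambda\,\frac{\int_0^1e^{-u_{\blambda}(s)}(1-e^{-w_{\bmu,\blambda}(s)})\mathrm{d}s}{\int_0^1e^{-v_{\bmu,\blambda}(s)}\mathrm{d}s\cdot\int_0^1e^{-u_{\blambda}(s)}\mathrm{d}s}.
\end{equation*}
Because $0\le w_{\bmu,\blambda}\le w_{\bmu,\blambda}(1)$, one has $1-e^{-w_{\bmu,\blambda}}\le1-e^{-w_{\bmu,\blambda}(1)}$ pointwise and $\int_0^1e^{-v_{\bmu,\blambda}(s)}\mathrm{d}s\ge e^{-w_{\bmu,\blambda}(1)}\int_0^1e^{-u_{\blambda}(s)}\mathrm{d}s$, so the right-hand side is bounded above by $\frac{\blambda(e^{w_{\bmu,\blambda}(1)}-1)}{\int_0^1e^{-u_{\blambda}(s)}\mathrm{d}s}=J_{\blambda}(e^{w_{\bmu,\blambda}(1)}-1)$. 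The point is to absorb the factor $\blambda$ into $J_{\blambda}=\blambda/\int_0^1e^{-u_{\blambda}(s)}\mathrm{d}s$, which converges to $\pi^2/2$ by \eqref{eq:3.03} and hence stays bounded; together with $w_{\bmu,\blambda}(1)\to0$ from Lemma~\ref{lma:4.3}, the product vanishes and (i) follows.

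\emph{Part (ii).} I would first reduce to a nonlocal-coefficient bound. Since $v_{\bmu,\blambda}\le0$ (Lemma~\ref{lma:4.1}) gives $2e^{v_{\bmu,\blambda}(r_{\bmu,\blambda}^*)}-1\in(-1,1]$, it suffices to prove that $\frac{\bmu}{\int_0^1e^{v_{\bmu,\blambda}(s)}\mathrm{d}s}\to0$. The inverse H\"older estimate \eqref{eq:1.25} gives
\begin{equation*}
\frac{\bmu}{\int_0^1e^{v_{\bmu,\blambda}(s)}\mathrm{d}s}\cdot\frac{\blambda}{\int_0^1e^{-v_{\bmu,\blambda}(s)}\mathrm{d}s}\le\bmu\blambda,
\end{equation*}
so it is enough to bound the cation nonlocal coefficient $\frac{\blambda}{\int_0^1e^{-v_{\bmu,\blambda}(s)}\mathrm{d}s}$ from below by a positive constant. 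I would extract this from the first integral \eqref{eq:4.11} evaluated at $r=1$: since $v_{\bmu,\blambda}'(1)=u_{\blambda}'(1)+w_{\bmu,\blambda}'(1)=-(\blambda-\bmu)$ by \eqref{eq:1.07}, and the $\bmu$-term in \eqref{eq:4.11} is negative because $v_{\bmu,\blambda}(1)<0$, we obtain
\begin{equation*}
\tfrac12(\blambda-\bmu)^2\le\frac{\blambda(e^{-v_{\bmu,\blambda}(1)}-1)}{\int_0^1e^{-v_{\bmu,\blambda}(s)}\mathrm{d}s}\le e^{-u_{\blambda}(1)}\,\frac{\blambda}{\int_0^1e^{-v_{\bmu,\blambda}(s)}\mathrm{d}s},
\end{equation*}
where the last step uses $v_{\bmu,\blambda}(1)\ge u_{\blambda}(1)$ (Lemma~\ref{lma:4.2}). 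Since $e^{-u_{\blambda}(1)}=\pi^{-2}\blambda^2(1+o_{\blambda})$ by \eqref{eq:2.05}, this forces $\frac{\blambda}{\int_0^1e^{-v_{\bmu,\blambda}(s)}\mathrm{d}s}\ge\tfrac{\pi^2}2(1+o_{\blambda})$, and hence $\frac{\bmu}{\int_0^1e^{v_{\bmu,\blambda}(s)}\mathrm{d}s}\le\frac{2\bmu\blambda}{\pi^2}(1+o_{\blambda})\to0$ under $\bmu\blambda\to0$, which proves (ii).

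The step I expect to be the real obstacle is the lower bound for $\frac{\blambda}{\int_0^1e^{-v_{\bmu,\blambda}(s)}\mathrm{d}s}$ in (ii): the inverse H\"older inequality \eqref{eq:1.25} by itself controls only the \emph{product} of the two nonlocal coefficients, so $\frac{\bmu}{\int_0^1e^{v_{\bmu,\blambda}(s)}\mathrm{d}s}\to0$ cannot be read off from it without separately pinning down $\frac{\blambda}{\int_0^1e^{-v_{\bmu,\blambda}(s)}\mathrm{d}s}$ from below; doing so is exactly where the sharp boundary asymptotics \eqref{eq:2.05} of $u_{\blambda}(1)$ (that is, Theorem~\ref{thm:2.2}) and the comparison $v_{\bmu,\blambda}\ge u_{\blambda}$ (Lemma~\ref{lma:4.2}) have to be used in tandem, which is why Theorem~\ref{thm:2.2} enters the proof of Theorem~\ref{thm:1.1}. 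By contrast, (i) is essentially bookkeeping; the only trap is to keep the factor $\blambda$ tied to $J_{\blambda}$ (which stays $O(1)$ by \eqref{eq:3.03}) rather than estimating it away too early.
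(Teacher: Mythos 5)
Your proof is correct. Part (i) is essentially the paper's own argument: the sandwich $\int_0^1e^{-v_{\bmu,\blambda}}\geq e^{-w_{\bmu,\blambda}(1)}\int_0^1e^{-u_{\blambda}}$ from Lemma~\ref{lma:4.2} (this is \eqref{eq:4.22} in the paper), the bound by $J_{\blambda}\bigl(e^{w_{\bmu,\blambda}(1)}-1\bigr)$ with $J_{\blambda}$ kept $O(1)$ via \eqref{eq:2.01}/\eqref{eq:3.03}, and Lemma~\ref{lma:4.3}.

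For part (ii) you take a genuinely different and noticeably heavier route than the paper. You invoke the inverse H\"older estimate \eqref{eq:1.25}, which only controls the product of the two nonlocal coefficients, and then manufacture a lower bound $\frac{\blambda}{\int_0^1e^{-v_{\bmu,\blambda}}}\geq\frac{\pi^2}{2}(1+o_{\blambda})$ from the first integral \eqref{eq:4.11} at $r=1$ (using $v_{\bmu,\blambda}'(1)=-(\blambda-\bmu)$, $v_{\bmu,\blambda}(1)\geq u_{\blambda}(1)$, and the near-field asymptotics \eqref{eq:2.05}); every step checks out. The paper instead uses the plain H\"older (Cauchy--Schwarz) inequality $\int_0^1e^{v_{\bmu,\blambda}}\int_0^1e^{-v_{\bmu,\blambda}}\geq1$, which gives directly $\frac{\bmu}{\int_0^1e^{v_{\bmu,\blambda}}}\leq\bmu\int_0^1e^{-v_{\bmu,\blambda}}\leq\bmu\int_0^1e^{-u_{\blambda}}=\frac{2\bmu\blambda}{\pi^2}(1+o_{\blambda})\to0$, using only Lemma~\ref{lma:4.2} and the far-field expansion \eqref{eq:2.01}, with no boundary asymptotics and no first integral needed. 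So your closing diagnosis is off on one point: the lower bound on $\frac{\blambda}{\int_0^1e^{-v_{\bmu,\blambda}}}$ is not an unavoidable obstacle, and \eqref{eq:2.05} is not needed for Lemma~\ref{lma:4.4}(ii) (it enters Theorem~\ref{thm:1.1} through Lemma~\ref{lma:4.3}, i.e.\ the estimates \eqref{eq:4.15}--\eqref{eq:4.16}). What your version buys is a sharper piece of information, namely the quantitative lower bound on the cation nonlocal coefficient of $v_{\bmu,\blambda}$ itself (essentially a first step toward \eqref{eq:4.27} in the regime $\blambda\to\infty$), at the cost of extra machinery; the paper's version is the more elementary and self-contained one.
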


\begin{proof}
From Lemma~\ref{lma:4.2} and \eqref{eq:4.13}, we have\begin{equation}
\label{eq:4.22}\frac{\blambda}{\int_0^1e^{-u_{\bmu}(s)}\mathrm{d}s}<\frac{\blambda}{\int_0^1e^{-v_{\bmu,\blambda}(s)}\mathrm{d}s}<\frac{\blambda}{\int_0^1e^{-u_{\blambda}(s)}\mathrm{d}s}e^{w_{\bmu,\blambda}(1)}.\end{equation}
By \eqref{eq:2.01}, \eqref{eq:4.22} and Lemma~\ref{lma:4.3}, we get Lemma~\ref{lma:4.4}(i).

On the other hand, note that $\int_0^1e^{v_{\bmu,\blambda}(s)}\mathrm{d}s\int_0^1e^{-v_{\bmu,\blambda}(s)}\mathrm{d}s\geq1$ (by H\"older's inequality).
This along with \eqref{eq:2.01} and Lemma~\ref{lma:4.2} immediately implies
\[
0<\frac{\bmu}{\int_0^1e^{v_{\bmu,\blambda}(s)}\mathrm{d}s}\leq\bmu\int_0^1e^{-v_{\bmu,\blambda}(s)}\mathrm{d}s\leq \bmu\int_0^1e^{-u_{\blambda}(s)}\mathrm{d}s=\frac{2\bmu\blambda}{\pi^2}\left(1+o_{\blambda}\right)\stackrel{\bmu\blambda\to0}{-\!\!\!-\!\!\!-\!\!\!-\!\!\!-\!\!\!-\!\!\!\rightarrow}0.
\]
Since $v_{\bmu,\blambda}\leq0$ (by Lemma~\ref{lma:4.1}), we obtain
\[\left|\frac{\bmu\left(2e^{v_{\bmu,\blambda}(r_{\bmu,\blambda}^*)}-1\right)}{\int_0^1e^{v_{\bmu,\blambda}(s)}\mathrm{d}s}\right|\leq\frac{3\bmu}{\int_0^1e^{v_{\bmu,\blambda}(s)}\mathrm{d}s}\to0~~\mbox{as}~\blambda\to\infty.\]
This proves Lemma~\ref{lma:4.4}(ii). Therefore, the proof of Lemma~\ref{lma:4.4} is completed.
\end{proof}

Finally, by \eqref{eq:4.21} and Lemma~\ref{lma:4.4}, we arrive at \eqref{eq:4.17} and complete the proof of \eqref{eq:4.05}.

\subsection{Proof of Theorem \texorpdfstring{\ref{thm:1.1}(b)}{1.1(b)}}
\label{subsec:4.4}

In this section, we fix $\blambda>0$.
Note that Lemmas~\ref{lma:4.1}--\ref{lma:4.2} and the estimates \eqref{eq:4.10}--\eqref{eq:4.15} still hold for $\blambda>\bmu>0$.
Hence, as $\bmu\to0^+$, we have
\begin{equation}
\label{eq:4.24}
1\leq e^{w_{\bmu,\blambda}(1)}<\left(1-\frac{\bmu}{\blambda}\right)^2+\left(2-\frac{\bmu}{\blambda}\right)\frac{\bmu}{\blambda}e^{-u_{\blambda}(1)}\xrightarrow[\bmu\to0^+]{~\blambda>0~\text{fixed}~}1.
\end{equation}
Here we have used the fact that $u_{\blambda}(1)$ is independent of $\bmu$.
As a consequence, by \eqref{eq:4.24} and the monotonicity of $w_{\bmu,\blambda}$ (cf. Lemma~\ref{lma:4.2}), we arrive at
\begin{equation}
\label{eq:4.25}
\lim_{\bmu\to0^+}\max_{\left[0,1\right]}\left|w_{\bmu,\blambda}\right|=0.
\end{equation}

It remains to claim
\begin{equation}
\label{eq:4.26}
\lim_{\bmu\to0^+}\max_{\left[0,1\right]}\left|w_{\bmu,\blambda}'\right|=0.
\end{equation}
We assume that $w_{\bmu,\blambda}'$ attains its maximum value at $r=r_{\bmu}^*$.
If $r_{\bmu}\in\left(0,1\right)$, then one may check that \eqref{eq:4.18}--\eqref{eq:4.21} with $r_{\bmu,\blambda}^*=r_{\bmu}^*$ still hold for $\blambda>\bmu>0$.
Moreover, by \eqref{eq:4.22} and \eqref{eq:4.25}, we obtain
\begin{equation}
\label{eq:4.27}
\lim_{\bmu\to0^+}\frac{\blambda}{\int_0^1e^{-v_{\bmu,\blambda}(s)}\mathrm{d}s}=\frac{\blambda}{\int_0^1e^{-u_{\blambda}(s)}\mathrm{d}s}.
\end{equation}
This along with \eqref{eq:4.21} yields
\begin{equation}
\label{eq:4.28}
w_{\bmu,\blambda}'^2(r_{\bmu}^*)\leq\frac{\bmu}{\int_0^1e^{v_{\bmu,\blambda}(s)}\mathrm{d}s}+\blambda\left(\frac1{\int_0^1e^{-v_{\bmu,\blambda}(s)}\mathrm{d}s}-\frac1{\int_0^1e^{-u_{\blambda}(s)}\mathrm{d}s}\right)\xrightarrow[\bmu\to0^+]{~\blambda>0~\text{fixed}~}0.
\end{equation}
Therefore, we obtain \eqref{eq:4.26} and complete the proof of Theorem~\ref{thm:1.1}(b).

\section{Applications and discussion}\label{sec5}

In this section we provide an application to calculating capacitances for the doubler-layer capacitantors in single-ion electrolyte solutions. As was studied in \cite[Section 6]{Lee2019}, we define a quantity $\mathscr{C}^+(u_{\blambda};K)$ related to the capacitance in a physical region $K_{\blambda;\alpha}\subset[0,1]$ as
\begin{equation}
\label{eq:5.01}\mathscr{C}^+(u_{\blambda};K_{\blambda;\alpha})\coloneqq\frac{\left|\int_{K_{\blambda;\alpha}}\blambda^{-1}\rho_{\blambda}(r)\mathrm{d}r\right|}{\displaystyle\max_{x,y\in\overline{K_{\blambda;\alpha}}}\left|u_{\blambda}(x)-u_{\blambda}(y)\right|}.
\end{equation}
For a sake of simplicity, we shall set $K_{\blambda;\alpha}=[\rpal,1]$. We show that when $K_{\blambda;\alpha}$ attached to the boundary (the charge surface) has the thickness of the order $\blambda^
{-\alpha}$ and $\alpha\geq1$, $\mathscr{C}^+(u_{\blambda};K_{\blambda;\alpha})$ has a positive infimum as $\blambda$ tends to infinity. However, if the thickness of $K_{\blambda;\alpha}$ is far larger compared to the order $\blambda^{-1}$ as $\blambda\gg1$, then $\mathscr{C}^+(u_{\blambda};K_{\blambda;\alpha})$ tends to zero. Such results are based on the following refined asymptotics of $\mathscr{C}^+(u_{\blambda};[\rpal,1])$.

\begin{thm}\label{thm:5.1}Under the same hypothese as in Theorem~\ref{thm:2.1}, as $\blambda\gg1$ and $p>0$, the asymptotic expansions of $\mathscr{C}^+(u_{\blambda};[\rpal,1])$ are precisely depicted as follows:
\begin{enumerate}
\item[(a)] If $\alpha>1$, then
\[
\mathscr{C}^+\left(u_{\blambda};\left[\rpal,1\right]\right)=\frac12+\left(-\frac{p}{8\blambda^{\alpha-1}}\chi_1\left(\alpha\right)+\frac{\pi^2}{2\blambda^2}\chi_2\left(\alpha\right)\right)\left(1+o_{\blambda}\right),\]
where\[\chi_1\left(\alpha\right)=\begin{cases}0,&\text{if }\alpha>3,\\1,&\text{if }1<\alpha\leq3,\end{cases}\quad\mbox{and}\quad\chi_2\left(\alpha\right)=\begin{cases}1,&\text{if }\alpha\geq3,\\0,&\text{if }1<\alpha<3.\end{cases}\]
Note that $\chi_1\left(\alpha\right)=\chi_2\left(\alpha\right)=1$ as $\alpha=3$.
\item[(b)] If $\alpha=1$, then
\[
\mathscr{C}^+\left(u_{\blambda};\left[\rpal,1\right]\right)=\frac{p}{2\left(p+2\right)\log\left(1+\frac{p}{2}\right)}\left(1+\frac{H}{\blambda^2}(1+o_{\blambda})\right),
\]
where $H$ is defined by
\[H=\frac{\pi^2\left(p^2+6p+12\right)}{6\left(p+2\right)}+\frac{p^2\pi^2\left(p+6\right)}{24\left(p+2\right)\log\left(1+\frac{p}{2}\right)}.\]
\item[(c)] If $0<\alpha<1$, then
\[
\mathscr{C}^+\left(u_{\blambda};\left[\rpal,1\right]\right)=\frac{1}{\log\blambda^{2-2\alpha}}(1+o_{\blambda}).
\]
\end{enumerate}
\end{thm}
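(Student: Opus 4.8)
The plan is to reduce $\mathscr{C}^+(u_{\blambda};[\rpal,1])$ to an exact elementary expression in $\theta_{\blambda}:=\sqrt{J_{\blambda}/2}$ and then to insert the expansion of $J_{\blambda}$ from Lemma~\ref{lma:3.1}. First I would rewrite numerator and denominator. Since $\rho_{\blambda}=-u_{\blambda}''$ and $u_{\blambda}'(1)=-\blambda$ (see \eqref{eq:1.07}), integration gives $\int_{\rpal}^1\blambda^{-1}\rho_{\blambda}=\blambda^{-1}\bigl(u_{\blambda}'(\rpal)-u_{\blambda}'(1)\bigr)$, which is positive because $u_{\blambda}$ is decreasing and concave (cf.\ \eqref{eq:1.08}); the same monotonicity gives $\max_{x,y\in[\rpal,1]}|u_{\blambda}(x)-u_{\blambda}(y)|=u_{\blambda}(\rpal)-u_{\blambda}(1)$. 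Using the closed forms \eqref{eq:3.01} and \eqref{eq:3.10}, the relation $\blambda=-u_{\blambda}'(1)=2\theta_{\blambda}\tan\theta_{\blambda}$, the identity $\tan A-\tan B=\sin(A-B)/(\cos A\cos B)$, and the substitution $\theta_{\blambda}\rpal=\theta_{\blambda}-\eta$ with $\eta:=\theta_{\blambda}\,p\blambda^{-\alpha}$, one collapses the quotient to the exact formula
\[
\mathscr{C}^+(u_{\blambda};[\rpal,1])=\frac{\sin\eta}{2\cos\delta\,\sin(\delta+\eta)\,\log\!\bigl(\sin(\delta+\eta)/\sin\delta\bigr)},\qquad \delta:=\tfrac{\pi}{2}-\theta_{\blambda},
\]
where I have used $\sin\theta_{\blambda}=\cos\delta$, $\cos\theta_{\blambda}=\sin\delta$, and $\cos(\theta_{\blambda}-\eta)=\sin(\delta+\eta)$. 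This identity drives the whole proof.

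Next I feed in Lemma~\ref{lma:3.1}: $\theta_{\blambda}=\tfrac{\pi}{2}-\tfrac{\pi}{\blambda}+\tfrac{2\pi}{\blambda^2}+\cdots$, hence $\delta=\tfrac{\pi}{\blambda}-\tfrac{2\pi}{\blambda^2}+\cdots=\mathcal{O}(\blambda^{-1})$ while $\eta=\bigl(\tfrac{\pi}{2}-\delta\bigr)p\blambda^{-\alpha}=\mathcal{O}(\blambda^{-\alpha})$. The ratio $x:=\eta/\delta=\tfrac{p}{2}\blambda^{1-\alpha}(1+o_{\blambda})$ then separates the three regimes cleanly: $x\to0$ for $\alpha>1$, $x\to p/2$ for $\alpha=1$, and $x\to\infty$ for $0<\alpha<1$ with $\log x=(1-\alpha)\log\blambda+\log\tfrac{p}{2}+o_{\blambda}$.

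For $0<\alpha<1$ we have $\delta\ll\eta\to0$, so reading off the master formula directly (replace $\sin\eta$ and $\sin(\delta+\eta)$ by $\eta$, $\cos\delta$ by $1$, and $\log(\sin(\delta+\eta)/\sin\delta)$ by $\log(\eta/\delta)$, each up to a factor $1+o_{\blambda}$) yields $\mathscr{C}^+=\tfrac12\bigl(\log(\eta/\delta)\bigr)^{-1}(1+o_{\blambda})=\bigl(\log\blambda^{2-2\alpha}\bigr)^{-1}(1+o_{\blambda})$, which is (c). For $\alpha\ge1$, where $x$ stays bounded, I substitute $\eta=x\delta$ and Taylor-expand $\sin,\cos,\log$ as $\delta\to0$, obtaining
\[
\mathscr{C}^+(u_{\blambda};[\rpal,1])=g(x)\bigl(1+\delta^2 B(x)+\mathcal{O}(\delta^4)\bigr),\qquad g(x):=\frac{x}{2(1+x)\log(1+x)},
\]
with $B$ an explicit function, bounded near $x=0$ and $x=p/2$, satisfying $B(x)\to1$ as $x\to0$. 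Since $g(x)=\tfrac12-\tfrac{x}{4}+\mathcal{O}(x^2)$, case (a) follows by inserting $x=\tfrac{p}{2}\blambda^{1-\alpha}(1+o_{\blambda})$ and $\delta=\tfrac{\pi}{\blambda}(1+o_{\blambda})$: one gets $\mathscr{C}^+=\tfrac12-\tfrac{p}{8\blambda^{\alpha-1}}+\tfrac{\pi^2}{2\blambda^2}+(\text{strictly lower order})$, and comparing the two competing scales $\blambda^{-(\alpha-1)}$ and $\blambda^{-2}$ — coinciding exactly at $\alpha=3$ — produces the indicators $\chi_1,\chi_2$. For case (b), $x\to p/2$ gives the leading constant $g(p/2)=\tfrac{p}{2(p+2)\log(1+p/2)}$, and the first correction is $\mathcal{O}(\blambda^{-2})$, receiving contributions both from $\delta^2 B(p/2)$ and from the $\mathcal{O}(\blambda^{-2})$ deviation of $x$ from $p/2$ (the $\mathcal{O}(\blambda^{-1})$ part of $x-\tfrac{p}{2}$ cancels); matching the $\blambda^{-2}$ coefficient yields $H$.

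The main obstacle is case (b): there $\delta$ and $\eta$ are comparable, nothing simplifies away, and pinning down $H$ requires the full second-order expansion of the composite expression above together with the $\blambda^{-2}$ term of $x=\eta/\delta$ — the latter forces us to know $\theta_{\blambda}$, hence $J_{\blambda}$, to order $\blambda^{-3}$, which is precisely what Lemma~\ref{lma:3.1} supplies. This step is long but entirely routine. A secondary point requiring care is the scale bookkeeping in case (a) that produces $\chi_1$ and $\chi_2$, in particular checking that all omitted terms are $o_{\blambda}$ relative to the dominant of $\blambda^{-(\alpha-1)}$ and $\blambda^{-2}$.
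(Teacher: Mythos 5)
Your proposal is correct, and it is essentially the computation the paper has in mind but declines to print (the paper omits the proof, saying only that it rests on refined asymptotics of $u_{\blambda}$, $u_{\blambda}'$); your organization of it is cleaner and worth noting. Reducing the quotient \eqref{eq:5.01} to the exact identity
\[
\mathscr{C}^+\bigl(u_{\blambda};[\rpal,1]\bigr)=\frac{\sin\eta}{2\cos\delta\,\sin(\delta+\eta)\,\log\bigl(\sin(\delta+\eta)/\sin\delta\bigr)},\qquad \delta=\tfrac{\pi}{2}-\sqrt{J_{\blambda}/2},\ \ \eta=\sqrt{J_{\blambda}/2}\,p\,\blambda^{-\alpha},
\]
is legitimate (it follows from \eqref{eq:3.01}, \eqref{eq:3.10}, \eqref{eq:6.02}, the monotonicity \eqref{eq:1.08}, and the tangent subtraction formula), and it makes the three regimes transparent through $x=\eta/\delta$. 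I checked the deferred computations: your $B(x)$ works out to $\tfrac12+\tfrac{1+2x}{6}+\tfrac{x^2+2x}{6\log(1+x)}$, which indeed satisfies $B(0^+)=1$, giving exactly the $\tfrac{\pi^2}{2\blambda^2}$ term and the $\chi_1,\chi_2$ bookkeeping in (a); in (b) the claimed cancellation of the $O(\blambda^{-1})$ part of $x-\tfrac p2$ does occur (both $\eta$ and $\delta$ carry the same relative correction $-2/\blambda$, so $x=\tfrac p2(1+\tfrac{\pi^2}{3\blambda^2}(1+o_{\blambda}))$, which is precisely where the $\blambda^{-3}$ term of Lemma~\ref{lma:3.1} enters), and $\pi^2B(p/2)+\tfrac{p\pi^2}{6}\,\tfrac{g'(p/2)}{g(p/2)}$ simplifies to the stated $H$. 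Case (c) is also handled correctly. Two minor points you should make explicit in a write-up: justify replacing $\log(\sin(\delta+\eta)/\sin\delta)$ by $\log(1+x)(1+o_{\blambda})$ in case (c) (the $O(\eta^2)$ error is harmless only because $\log(1+x)\to\infty$), and note that Theorem~\ref{thm:2.2} as stated is not refined enough for $H$, so the appeal must be to \eqref{eq:3.01} and the full Lemma~\ref{lma:3.1}, exactly as you do.
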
\setlength{\parskip}{4pt}
Since the proof of Theorem~\ref{thm:5.1} requires a huge amount of elementary computations based on refined asymptotics of $u_{\blambda}$ and $u_{\blambda}'$ in Theorem~\ref{thm:2.2}, we omit the details.

Finally, we make brief summaries for Theorem~\ref{thm:5.1} as follows.
\begin{itemize}
\item 
For $\rpal$ defined in~\eqref{eq:1.14}, we have
\begin{equation}
\label{eq:5.05}
\mathscr{C}^+(u_{\blambda};[\rpal,1])=\begin{cases}
\displaystyle\frac{1}{2}(1+o_{\blambda})\quad&\text{for }\alpha>1,\\
\displaystyle\frac{p}{\displaystyle2(p+2)\log\left(1+\frac{p}{2}\right)}(1+o_{\blambda})\quad&\text{for }\alpha=1,\\
\displaystyle\frac{1}{\log\blambda^{2-2\alpha}}(1+o_{\blambda})\quad&\text{for }0<\alpha<1.\end{cases}
\end{equation}
\item For $r\in[0,1)$ independent of $\blambda$, $\displaystyle\mathscr{C}^+(u_{\blambda};[r,1])\sim\frac{1}{\log\blambda^2}$ tending to zero.
\end{itemize}
Note that in \eqref{eq:5.05}, $g\left(p\right):=\frac{p}{2(p+2)\log\left(1+\frac{p}{2}\right)}$ is strictly increasing to the variable $p>0$ and $\displaystyle\lim_{p\to\infty}g\left(p\right)=\textstyle\frac12$. Since the amount of electrical energy which the capacitor can store depends on its capacitance, \eqref{eq:5.05} confirms an important property of the ``double-layer capacitance" that the corresponding capacitance~\eqref{eq:5.01} of the electrostatic model~\eqref{eq:1.05}--\eqref{eq:1.06} stores much more energy in thinner region attached to the charged surface.

Before closing this section, we want to stress that the double-layer capacitance in binary electrolytes has been introduced in \cite[Theorem 6.1]{Lee2019}. Let us consider the same region $[\rpal,1]$ having the thickness $O(\blambda^{-\alpha})$ with $\alpha\geq1$ attached to the charged surface and explain why we are interested in calculating the corresponding capacitance in single-ion electrolytes. A reason is that for binary electrolytes, the maximum potential difference in $[\rpal,1]$ is too small to get the precise value of $\displaystyle\lim_{\blambda\to\infty}\mathscr{C}^+(u_{\blambda};[\rpal,1])$. However, for the case of single-ion electrolytes, we exactly obtain the precise value of $\displaystyle\lim_{\blambda\to\infty}\mathscr{C}^+(u_{\blambda};[\rpal,1])$ shown in \eqref{eq:5.05}. Such a result provides a practical application for calculating the double-layer capacitance in electrolytes~\cite{FJC}.

\section*{Acknowledgement} The authors are grateful to the referee for his/her carful reading and valuable suggestions which improve the exposition of the original manuscript.
This work was partially supported by the MOST grants~108-2115-M-007-006-MY2 (C.-C. Lee) and 106-2115-M-002-003-MY3 (T.-C. Lin) of Taiwan. The research of T.-C. Lin was also partially supported by the Center for Advanced Study in Theoretical Sciences (CASTS) and the National Center for Theoretical Sciences (NCTS) of Taiwan.

\section{Appendix: Proof of Lemma~\ref{lma:3.1}}

In this section, we state the proof of Lemma~\ref{lma:3.1}.

By \eqref{eq:3.01} and \eqref{eq:3.02}, one may check that
\[
I_{\blambda}
=\int_0^1e^{-u_{\blambda}(s)}\mathrm{d}s=\int_0^1\sec^2\left(\sqrt{\frac{J_{\blambda}}2}r\right)\mathrm{d}r
=\sqrt{\frac{2}{J_{\blambda}}}\tan\sqrt{\frac{J_{\blambda}}2},
\]
which implies
\begin{equation}
\label{eq:6.02}
\frac{\sqrt{2J_{\blambda}}}{\blambda}=\cot\sqrt{\frac{J_{\blambda}}2}=\tan\left(\frac{\pi}2-\sqrt{\frac{J_{\blambda}}2}\right).
\end{equation}

We shall now establish the precise first third order terms of the asymptotic expansion of $J_{\blambda}$ with respect to $\blambda\gg1$.
Firstly, by \eqref{eq:3.02} we have $0<J_{\blambda}<\pi^2/2$ for all $\blambda>0$.
Along with \eqref{eq:6.02}, it immediately yields
\[\lim_{\blambda\to\infty}J_{\blambda}=\frac{\pi^2}2.\]
This gives the precise leading order term of $J_{\blambda}$ with respect to $\blambda\gg1$.
Moreover, applying the approximation $\tan s=s+o(s)$ for $s=\frac{\pi}2-\sqrt{\frac{J_{\blambda}}2}\to0$ to the right-hand side of \eqref{eq:6.02}, one obtains
\begin{equation}
\label{eq:6.03}
\frac{\sqrt{2J_{\blambda}}}\blambda=\left(\frac{\pi}2-\sqrt{\frac{J_{\blambda}}2}\right)(1+o_{\blambda}).
\end{equation}

To deal with the second order term of $J_{\blambda}$ with respect to $\blambda\gg1$, let us set
\[a_{\blambda}=\left(J_{\blambda}-\frac{\pi^2}{2}\right){\blambda}.\]
Then we can express the asymptotic expansion of $\sqrt{\frac{J_{\blambda}}2}$ as 
\[
\sqrt{\frac{J_{\blambda}}2}=\sqrt{\frac{\pi^2}{4}+\frac{a_{\blambda}}{2\blambda}}=\frac{\pi}2+\frac{a_{\blambda}}{2\pi\blambda}(1+o_{\blambda}).
\]
Along with \eqref{eq:6.03} arrives at $\frac{\pi}2+\frac{a_{\blambda}}{2\pi\blambda}(1+o_{\blambda})=\frac{\blambda}{2}\left(\frac{\pi}2-\sqrt{\frac{J_{\blambda}}2}\right)(1+o_{\blambda})=-\frac{a_{\blambda}}{4\pi}(1+o_{\blambda})$, and consequently $a_{\blambda}=-2\pi^2+o_{\blambda}$, as $\blambda\gg1$, which stands the second order term of expansion of $J_{\blambda}$.
As a conclusion, 
\begin{equation}
\label{eq:6.05}
J_{\blambda}=\frac{\pi^2}2-\frac{2\pi^2}\blambda(1+o_{\blambda}).
\end{equation}
To further get the precise third order term of $J_{\blambda}$ with respect to $\blambda$, we consider the difference between $J_{\blambda}$ and its first two order terms shown in the right-hand side of \eqref{eq:6.05} and set
\[b_{\blambda}=\left(J_{\blambda}-\frac{\pi^2}2+\frac{2\pi^2}\blambda\right)\blambda^2.
\]
Then we have 
\begin{equation}
\label{eq:6.07}
\sqrt{\frac{J_{\blambda}}2}=\sqrt{\frac{\pi^2}{4}-\frac{\pi^2}{\blambda}+\frac{b_{\blambda}}{2\blambda^2}}=\frac{\pi}2-\frac{\pi}\blambda+\frac{1}{\blambda^2}\left(\frac{b_{\blambda}}{2\pi}-\pi\right)\left(1+o_{\blambda}\right).
\end{equation}
Rewriting \eqref{eq:6.03} as $\sqrt{\frac{J_{\blambda}}{2}}
=\frac{\blambda}{2}\left(\frac{\pi}2
-\sqrt{\frac{J_{\blambda}}2}\right)
(1+o_{\blambda})$ and putting \eqref{eq:6.07} into this expression, after a simple calculation we can get
\[
b_{\blambda}=6\pi^2+o_{\blambda},\,\,\text{as}\,\,\blambda\gg1.
\]

Similarly, to obtain the precise the fourth order term of $J_{\blambda}$ with respect to $\blambda$, we set
\begin{equation}
\label{eq:6.09}
c_{\blambda}=\blambda^3\left(J_{\blambda}-\frac{\pi^2}{2}+\frac{2\pi^2}{\blambda}-\frac{6\pi^2}{\blambda^2}\right).
\end{equation}
One may follow same argument to get
\begin{equation}\label{eq:6.10}
c_{\blambda}=-16\pi^2+\frac{2\pi^4}{3}+o_{\blambda},~\text{as }\blambda\gg1
\end{equation}
Therefore, \eqref{eq:3.03} directly follows from \eqref{eq:6.09} and  \eqref{eq:6.10}, and the proof of Lemma~\ref{lma:3.1} is complete.

\end{document}